%
%
%
%


\documentclass[a4paper, 10pt]{amsart}
\usepackage{amscd}
\usepackage{amssymb}
\usepackage[all]{xy}
\usepackage[T1]{fontenc}
\usepackage{lmodern}
\date{5 September 2014}
\title[Twisted Deformation Quantization]{Twisted Deformation
Quantization of Algebraic Varieties}
\dedicatory{Dedicated to Rina Yekutieli, my Mother, on her 80th Birthday}

\usepackage{graphicx}
\usepackage{hyperref}
\hypersetup{colorlinks=false}
\urlstyle{same}

\author{Amnon Yekutieli}
\address{A. Yekutieli: Department of  Mathematics
Ben Gurion University,
Be'er Sheva 84105,
Israel}
\email{amyekut@math.bgu.ac.il}
\thanks{{\em Mathematics Subject Classification} 2000.
Primary: 53D55; Secondary: 14B10, 16S80, 17B40, 18D05.}
\keywords{Deformation quantization, algebraic varieties, stacks, gerbes,
DG Lie algebras.}
\thanks{This research was supported by the US-Israel Binational
Science Foundation and by the Israel Science Foundation.}

\newtheorem{thm}[equation]{Theorem}
\newtheorem{cor}[equation]{Corollary}
\newtheorem{prop}[equation]{Proposition}
\newtheorem{lem}[equation]{Lemma}
\theoremstyle{definition}
\newtheorem{dfn}[equation]{Definition}
\newtheorem{rem}[equation]{Remark}
\newtheorem{exa}[equation]{Example}

\newtheorem{que}[equation]{Question}

\newtheorem{setup}[equation]{Setup}

\numberwithin{equation}{section}
\setcounter{tocdepth}{1}

\newcommand{\iso}{\xrightarrow{\simeq}}

\newcommand{\xar}{\xrightarrow}
\newcommand{\opn}{\operatorname}
\newcommand{\cat}[1]{\operatorname{\mathsf{#1}}}
\newcommand{\bdot}{\bsym{\cdot}}

\newcommand{\ol}{\overline}

\newcommand{\rmitem}[1]{\item[\text{\textup{(#1)}}]}
\newcommand{\mfrak}[1]{\mathfrak{#1}}
\newcommand{\mcal}[1]{\mathcal{#1}}

\newcommand{\mbf}[1]{\mathbf{#1}}
\newcommand{\mrm}[1]{\mathrm{#1}}
\newcommand{\mbb}[1]{\mathbb{#1}}

\newcommand{\smfrac}[2]{\textstyle \frac{#1}{#2}}
\newcommand{\tup}[1]{\textup{#1}}
\newcommand{\bsym}[1]{\boldsymbol{#1}}
\newcommand{\boplus}{\bigoplus\nolimits}

\newcommand{\boprod}{\prod\nolimits}
\newcommand{\ot}{\otimes}
\newcommand{\hatotimes}[1]{\, \what{{\otimes}}_{#1} \,}
\newcommand{\hot}{\hatotimes{}}
\newcommand{\til}[1]{\tilde{#1}}

\newcommand{\what}[1]{\widehat{#1}}

\newcommand{\K}{\mbb{K}}
\newcommand{\R}{\mbb{R}}
\newcommand{\N}{\mbb{N}}
\newcommand{\Z}{\mbb{Z}}
\newcommand{\OO}{\mcal{O}}
\renewcommand{\AA}{\mcal{A}}

\newcommand{\Ga}{\Gamma}

\newcommand{\g}{\mfrak{g}}
\newcommand{\m}{\mfrak{m}}
\newcommand{\h}{\mfrak{h}}
\newcommand{\n}{\mfrak{n}}
\renewcommand{\a}{\mfrak{a}}
\newcommand{\om}{\omega}
\newcommand{\al}{\alpha}
\newcommand{\be}{\beta}
\newcommand{\ga}{\gamma}
\newcommand{\de}{\delta}
\newcommand{\si}{\sigma}
\newcommand{\bwedge}{{\textstyle \bigwedge}}

\renewcommand{\d}{\mrm{d}}
\newcommand{\OX}{\mcal{O}_X}



\newcommand{\twoto}{\Rightarrow}
\newcommand{\twoiso}{\stackrel{\simeq\ }{\Longrightarrow}}

\newcommand{\gerbe}[1]{\bsym{\mcal{#1}}}
\newcommand{\twocat}[1]{\operatorname{\bsym{\cat{#1}}}}

\newcommand{\lb}{\linebreak}
\newcommand{\crvar}{\curvearrowright}

\begin{document}

\begin{abstract}
Let $X$ be a smooth algebraic variety over a field $\K$ containing the real
numbers. We introduce the notion of twisted associative (resp.\
Poisson) deformation of the structure sheaf $\mcal{O}_X$. These are
stack-like versions of usual deformations. We prove that there is a
{\em twisted quantization} operation from twisted Poisson deformations
to twisted associative deformations, which is canonical and bijective
on gauge equivalence classes. This result extends work of Kontsevich, and our
own earlier work, on deformation quantization of algebraic varieties.
\end{abstract}

\maketitle

\tableofcontents

\setcounter{section}{-1}
\section{Introduction}
\label{sec:Int}
\numberwithin{equation}{subsection}

\subsection{Deformation Quantization}
Let $\K$ be a field of characteristic $0$, and let $X$ be a
smooth algebraic variety over $\K$, with structure sheaf
$\mcal{O}_X$. Suppose $R$ is a {\em parameter algebra} over $\K$;
namely $R$ is a complete local noetherian commutative 
$\K$-algebra, with maximal ideal $\m$ and residue field $R / \m = \K$. 
The main example is $R = \K[[\hbar]]$, the formal power series ring in the
variable $\hbar$. An {\em associative $R$-deformation of $\mcal{O}_X$} is a
sheaf $\mcal{A}$ of flat $\m$-adically complete associative $R$-algebras on $X$,
with an isomorphism 
$\psi : \K \otimes_R \mcal{A} \iso \mcal{O}_X$, called an
augmentation. Similarly, a {\em Poisson $R$-deformation of $\mcal{O}_X$} is a
sheaf $\mcal{A}$ of flat $\m$-adically complete commutative Poisson $R$-algebras
on $X$, with an augmentation to $\mcal{O}_X$. 

Such deformations could be sheaf-theoretically trivial, meaning that
$\mcal{A} \cong$ \lb $R \hot \mcal{O}_X$, endowed
with either an associative multiplication (called a star product), or
a Poisson bracket. This is what happens in the differentiable setup
(i.e.\ when $X$ is a $\mrm{C}^{\infty}$ manifold and $\K = \R$). But in the
algebro-geometric setup the sheaf $\mcal{A}$ could be very complicated
-- indeed, all classical commutative deformations of $\mcal{O}_X$ are
special cases of both associative and Poisson deformations. 

Suppose $\mcal{A}$ and $\mcal{A}'$ are two $R$-deformations (of the same kind,
namely both are Poisson or associative). A {\em gauge transformation} 
$g : \mcal{A} \to \mcal{A}'$ is an isomorphism of sheaves of $R$-algebras
(Poisson or associative) that commutes with the augmentations to $\mcal{O}_X$.
If such a gauge transformation exists, then $\mcal{A}$ and $\mcal{A}'$ are said
to be {\em gauge equivalent}.

An $R$-deformation $\mcal{A}$ of $\mcal{O}_X$ determines a {\em first order
bracket} $\{ -,- \}_{\mcal{A}}$ on $\mcal{O}_X$. 
When $R = \K[[\hbar]]$ this is the usual induced Poisson bracket,
cf.\ \cite{Ye1}. For general $R$ see Definition \ref{dfn:20}.
It is important to note that the  first order bracket $\{ -,- \}_{\mcal{A}}$
is gauge invariant. 

\begin{exa} \label{exa:23}
Suppose $\{ -,- \}$ is a Poisson bracket on $\mcal{O}_X$. Consider the
commutative $\K[[\hbar]]$-algebra 
$\mcal{A} := \mcal{O}_X[[\hbar]] = \K[[\hbar]] \hot \OO_X$, endowed
with the Poisson bracket $\hbar \{ -,- \}$ and the obvious augmentation to 
$\mcal{O}_X$. We get a Poisson $\K[[\hbar]]$-deformation of $\mcal{O}_X$, and
the first order bracket is 
$\{ -,- \}_{\mcal{A}} = \{ -,- \}$.
\end{exa}

Let $\mcal{A}$ be a Poisson $R$-deformation of $\mcal{O}_X$.
A {\em deformation quantization} of $\mcal{A}$ is an associative
$R$-deformation $\mcal{B} = \opn{quant}(\mcal{A})$, such that 
$\{ -,- \}_{\mcal{B}}  = \{ -,- \}_{\mcal{A}}$.

In the fundamental paper \cite{Ko1}, M. Kontsevich proved that in the
$\mrm{C}^{\infty}$ setup, and for $R = \R[[\hbar]]$, every Poisson
$\R[[\hbar]]$-deformation $\mcal{A}$ can be quantized.
Moreover, Kontsevich proved that his  quantization  is canonical, 
and it induces a bijection between the set of gauge
equivalence classes of Poisson deformations, and the set of gauge
equivalence classes of associative deformations.
 
We proved an analogous result for {\em affine smooth
algebraic varieties} over a field $\K$ containing $\R$.
The method used in our paper \cite{Ye1}
was an adaptation of the ideas of Kontsevich from \cite{Ko1} to the
algebro-geometric setup. There were cohomological obstructions, indicating that 
the same result (namely \cite[Theorem 0.1]{Ye1}) would not hold for more
complicated smooth algebraic varieties. 

\subsection{Twisted Quantization: Overview}
In the paper \cite{Ko2}, Kontsevich suggested that on {\em any smooth algebraic
variety} $X$, every Poisson $R$-deformation $\mcal{A}$ of $\mcal{O}_X$ can be
quantized to a {\em stack of $R$-algebroids} $\gerbe{B}$ on $X$. 

The purpose of the present paper is more than just to provide a proof
of that assertion of Konstevich. 
We introduce the notions of {\em twisted Poisson $R$-deformation} and 
{\em twisted associative $R$-deformation} of $\mcal{O}_X$. 
These are stacky versions of the $R$-deformations discussed above. A 
twisted associative $R$-deformation is a refined variant of the {\em stack
of $R$-linear algebroids} from \cite{Ko2}, and is very similar to
the notion of {\em DQ algebroid} from \cite{KS2}. 
The concept of twisted Poisson deformation appears to be totally new (in
algebraic geometry, as well as in real differential and complex analytic
geometry). 

Our main result (the Twisted Quantization Theorem, number \ref{thm:14} below)
establishes a {\em twisted quantization} operation,
which is a canonical bijection between the set of twisted gauge equivalence
classes of twisted Poisson $R$-deformations of $\mcal{O}_X$, and the set of
twisted gauge equivalence classes of twisted associative $R$-deformations.
The proof of Theorem \ref{thm:14} requires many
intermediate results (in ring theory, algebraic geometry, cosimplicial
groupoids and $\mrm{L}_{\infty}$ morphisms), some of them of
independent interest. We present a few of these intermediate results below in
the Introduction. 
The Twisted Quantization Theorem raises a few questions that we find
intriguing; see end of Section \ref{sec:TDQ}.

We should mention that the first occurrence of twisted quantization was in
the paper \cite{Ka} of M. Kashiwara from 1996, that concerned contact 
manifolds. We discuss this work, and several other papers related to twisted
deformations, in Subsection \ref{subsec:discuss}.

\subsection{Twisted Deformations} \label{subsec:tw.defs}
Let us now explain what are twisted deformations. 

We shall start with the concept of {\em crossed groupoid} (also known as a
crossed module over a groupoid, or a strict $2$-groupoid, cf.\ \cite{Ye7, Bw}).
This is a structure 
$\cat{P} = ( \cat{P}_1, \cat{P}_2, 
\opn{Ad}_{\cat{P}_1 \crvar \cat{P}_{2}}, \opn{D} )$
consisting of groupoids $\cat{P}_1$ and $\cat{P}_{2}$, such that 
$\opn{Ob}(\cat{P}_1) = \opn{Ob}(\cat{P}_2)$ and $\cat{P}_2$ is totally
disconnected; an action $\opn{Ad}_{\cat{P}_1 \crvar \cat{P}_{2}}$ of 
$\cat{P}_1$ on $\cat{P}_{2}$ called the {\em twisting}; and a morphism of
groupoids  (i.e.\  a functor) $\opn{D} : \cat{P}_{2} \to \cat{P}_{1}$ called the
{\em feedback}. There are conditions relating the twisting and the feedback; see
Definition \ref{dfn:cosim.101} for more details. 
We refer to the morphisms in the groupoid $\cat{P}_1$ as {\em $1$-morphisms}
of $\cat{P}$, or as {\em gauge transformations}. The morphisms in
$\cat{P}_2$ are called {\em $2$-morphisms}, or {\em inner gauge
transformations}. There is an obvious notion of morphism between crossed
groupoids.

Now let $X$ be a topological space. A {\em stack of crossed groupoids}
$\twocat{P}$ on $X$ is the geometrization of a crossed groupoid.
Thus for every open set $U \subset X$ we have a crossed groupoid
$\twocat{P}(U)$; and for every inclusion $V \subset U$ of open sets there is a
morphism of crossed groupoids
$\mrm{rest}^{}_{V / U} : \twocat{P}(U) \to \twocat{P}(V)$. 
We  assume that the restriction morphisms satisfy 
$\mrm{rest}^{}_{W / V} \circ \mrm{rest}^{}_{V / U} = \mrm{rest}^{}_{W / U}$
for a double inclusion $W \subset V \subset U$ of open sets of $X$.
We also assume that $\twocat{P}_1$ is a stack of groupoids (in the usual
sense), and for every local object 
$\mcal{A}$ of $\twocat{P}$ the presheaf of groups 
$\twocat{P}_2(\mcal{A})$ is a sheaf. 
See Definition \ref{dfn:200} for details.

For us there are two important stacks of crossed groupoids. 
Let $\K$ be a field of characteristic $0$,
$X$ an algebraic variety over $\K$, and $(R, \m)$ some parameter
algebra over $\K$  (these are the default assumptions for the rest of the
Introduction). First there is the stack
$\twocat{P} := \twocat{AssDef}(R, \mcal{O}_X)$ of associative $R$-deformations
of $\mcal{O}_X$, which assigns to each open set $U \subset X$ the set
$\twocat{P}(U) := \cat{AssDef}(R, \mcal{O}_U)$ of associative 
$R$-deformations of $\mcal{O}_U$. The $1$-morphisms in $\twocat{P}(U)$ are the
gauge transformations. The inner gauge group 
$\twocat{P}_2(U)(\mcal{A})$ of an 
object $\mcal{A} \in \opn{Ob}(\twocat{P}(U))$ is the
multiplicative group of elements of $\Gamma(U, \mcal{A})$ that are congruent to
$1$ modulo $\m$. The feedback $\opn{D}(a)$, for an element 
$a \in \twocat{P}_2(U)(\mcal{A})$, 
is the conjugation action of $a$ on $\mcal{A}|_U$.

The second stack of crossed groupoids is the stack
$\twocat{P} := \twocat{PoisDef}(R, \mcal{O}_X)$ of Poisson
$R$-deformations of $\OX$. It assigns to each open set $U \subset X$
the set $\twocat{P}(U) := \cat{PoisDef}(R, \mcal{O}_U)$ of Poisson
$R$-deformations of $\mcal{O}_U$. Again the $1$-morphisms are the gauge
transformations. The inner gauge group $\twocat{P}_2(U)(\mcal{A})$ of an
object $\mcal{A} \in \opn{Ob}(\twocat{P}(U))$
is $\Gamma(U, \opn{Exp}(\m \mcal{A}))$, 
where $\m \mcal{A}$ is viewed as a sheaf of pronilpotent Lie
algebras with the Poisson bracket. The action of $\twocat{P}_2(U)(\mcal{A})$  on
$\mcal{A}$ is by {\em formal hamiltonian flows}. 

Given a stack of crossed groupoids $\twocat{P}$ on $X$, we can
talk about {\em twisted objects of $\twocat{P}$}. Such a twisted object 
is a triple $(\gerbe{G}, \gerbe{A}, \opn{cp})$, consisting
of a gerbe $\gerbe{G}$ on $X$, called the {\em gauge gerbe}, with a morphism of
stacks of groupoids $\gerbe{A} : \gerbe{G} \to \twocat{P}_1$,
called the {\em representation}. 
This means that to every local object $i$ of 
$\gerbe{G}$ we assign a local object $\gerbe{A}(i)$ of $\twocat{P}$; and to
every local isomorphism $g : i \to j$ in $\gerbe{G}$ we assign a local 
$1$-isomorphism (i.e.\ a gauge transformation) 
$\gerbe{A}(g) : \gerbe{A}(i) \to \gerbe{A}(j)$ in $\twocat{P}_1$.
There is also an isomorphism of sheaves of groups
$\opn{cp} : \gerbe{G}(i) \to \twocat{P}_2(\gerbe{A}(i))$
for every local object $i$ of $\gerbe{G}$, called the {\em coupling
isomorphism}. All these data have to satisfy certain compatibilities -- 
see Definitions \ref{dfn:6} and \ref{dfn:11} for full details.
By slight abuse of language, we usually refer to the
twisted object $\gerbe{A}$ and its gauge gerbe $\gerbe{G}$.
An object $\gerbe{A}(i)$, for some local object $i$ of $\gerbe{G}$, is called a
{\em local object} of $\gerbe{A}$. 

In Examples \ref{exa:21} and \ref{exa:tw.sh.101} we explain how gerbes and
stacks of $R$-algebroids can be viewed as twisted objects, of the stacks of
crossed groupoids $\twocat{Grp}^{\times}(X)$ and  
$\twocat{Assoc}^{\times}(R, X)$ respectively. 

By definition a {\em twisted associative $R$-deformation of $\mcal{O}_X$} is a 
twisted object $\gerbe{A}$ of the stack of crossed groupoids
$\twocat{AssDef}(R, \mcal{O}_X)$.
Likewise, a {\em twisted Poisson $R$-deformation of
$\mcal{O}_X$} is a twisted object $\gerbe{A}$ of the stack  of crossed groupoids
$\twocat{PoisDef}(R, \mcal{O}_X)$.

The distinction between twisted associative $R$-deformations and DQ-algebroids
(in the sense of \cite{KS2}) is explained in Remark \ref{rem:6}.
As we said before, the concept of twisted Poisson deformation appears to be
completely new -- there does not seem to be any similar definition in the
literature.

\subsection{Gauge Gerbes}
There is a benefit in working with gauge gerbes: it is easier
to study the combinatorial structure of a twisted deformation.
Indeed, let $\gerbe{A}$ be a twisted $R$-deformation of $\mcal{O}_X$, with 
gauge gerbe $\gerbe{G}$. The coupling isomorphism endows $\gerbe{G}$
with the $\m$-adic filtration, making it into a pronilpotent
gerbe (in the sense of \cite{Ye4}). Using the results from 
\cite{Ye4} on pronilpotent gerbes we obtain the next theorem
(repeated in a more general form as Theorem \ref{thm:6} in the body of
the paper): 

\begin{thm}[Total Trivialization] \label{thm:17}
Let $\K$ be a field of characteristic $0$, $X$ an algebraic variety over
$\K$, $\gerbe{A}$ a twisted associative or Poisson $R$-deformation of 
$\OX$, with gauge gerbe $\gerbe{G}$, and $U$ an open set in $X$
that satisfies 
$\opn{H}^1(U, \OX) = \opn{H}^2(U, \OX) = 0$
\tup{(}for instance an affine open set\tup{)}. 
Then the groupoid $\gerbe{G}(U)$ is nonempty and connected.
\end{thm}

\subsection{Combinatorial Descent} \label{subsec:combinat}
In this subsection we leave the geometry aside, to talk about {\em combinatorial
descent data}. We shall return to the geometry in Subsection
\ref{subsec:geodesc}. 

We are interested in {\em cosimplicial crossed groupoids}. 
A cosimplicial crossed group\-oid 
$G = \{ G^p \}_{p \in \N}$
is made up of a crossed groupoid 
$G^p = ( G^p_1, G^p_2, \opn{Ad}, \opn{D})$ 
for every $p$, and a morphism of crossed groupoids
$G(\al) : G^p \to G^q$
for every arrow $\al : p \to q$ in the simplex category $\bsym{\Delta}$.
The morphisms $G(\al)$ must satisfy the simplicial relations. 

Let $G = \{ G^p \}_{p \in \N}$
be a cosimplicial crossed groupoid. A {\em combinatorial descent datum} in
$G$ is a triple $(\om, g, a)$ consisting of elements of these sorts: 
$\om \in \opn{Ob}(G^0$),
$g \in G^1_1(\om_{(0)}, \om_{(1)})$
and 
$a \in G^2_2(\om_{(0)}, \om_{(0)})$.
These elements have to satisfy two conditions: ``failure of $1$-cocycle'' and 
``twisted $2$-cocycle''; see Definition \ref{dfn:101}. 
Here $\om_{(i)}$ is the object of the crossed groupoid $G^p$
corresponding to the vertex $(i)$ of the simplex $\bsym{\Delta}^p$, and
$G^p_p(\om_{(i)}, \om_{(j)})$ is the set of morphisms
$\om_{(i)} \to \om_{(j)}$ in the groupoid $G^p_p$. 

As we shall see in Subsection \ref{subsec:geodesc} below, combinatorial descent
data is an abstraction of the familiar descent data for gerbes.

Let us denote by $\opn{Desc}(G)$ the set of combinatorial descent data.
There is an equivalence relation on this set, called gauge equivalence (see
Definition \ref{dfn:100}), and we denote by 
$\ol{\opn{Desc}}(G)$ the quotient set. A morphism 
$\Phi : G \to H$ of cosimplicial crossed groupoids induces a
function 
$\ol{\opn{Desc}}(\Phi) : \ol{\opn{Desc}}(G) \to
\ol{\opn{Desc}}(H)$.
The notion of {\em weak equivalence} between cosimplicial crossed groupoids is
introduced in Definition \ref{dfn:equiv-cosim.103}. 
The next result is proved in the companion paper \cite{Ye8}.
See \cite{Pr} for an alternative proof. 

\begin{thm}[{Combinatorial Equivalence,  \cite[Theorem 0.1]{Ye8}}]
\label{thm:intro.100}
Let $\Phi : G \to H$ be a weak equivalence between 
cosimplicial crossed groupoids. Then the function 
\[ \ol{\opn{Desc}}(\Phi) : \ol{\opn{Desc}}(G) \to
\ol{\opn{Desc}}(H) \]
is bijective.
\end{thm}

\subsection{Lie Descent}
Let $\g = \bigoplus_{i \in \Z} \, \g^i$ be a {\em quantum type} DG Lie algebra
over $\K$,
namely $\g^i = 0$ for all $i < -1$. There is an induced
pronilpotent DG Lie $R$-algebra 
$\m \hot \g = \bigoplus_{i \in \Z} \, \m \hot \g^i$.
Following \cite{De} and \cite{Ge1} we consider the {\em Deligne crossed
groupoid} $\opn{Del}(\g, R)$ (better known as the Deligne $2$-groupoid). Recall
that the set of objects of 
$\opn{Del}(\g, R)$ is the set $\opn{MC}(\m \hot \g)$ of solutions of the
Maurer-Cartan equation. The $1$-morphisms in $\opn{Del}(\g, R)$ are the
elements of the gauge group $\opn{Exp}(\m \hot \g^0)$, and the 
$2$-morphisms in $\opn{Del}(\g, R)$ are the
elements of the groups $\opn{Exp}(\m \hot \g^{-1})_{\om}$, for 
$\om \in \opn{MC}(\m \hot \g)$. See Sections \ref{sec:defs-DG-Lie} and
\ref{sec:Lie-desc} for details. 

Now suppose $\g = \{ \g^p \}_{p \in \N}$ is a {\em cosimplicial quantum type DG
Lie algebra}. So in each simplicial dimension $p$ there is a quantum type DG Lie
algebra $\g^p = \bigoplus_{i \geq -1} \, \g^{p, i}$.
Since the Deligne crossed groupoid is functorial, we get a cosimplicial
crossed groupoid 
$\opn{Del}(\g, R) = \bigl\{ \opn{Del}(\g^p, R) 
\bigr\}_{p \in \N}$.
The combinatorial descent construction from Subsection \ref{subsec:combinat}
gives rise to the set of {\em Lie descent data} \lb
$\opn{Desc} \bigl( \opn{Del}(\g, R) \bigr)$,
and its quotient set (modulo gauge equivalence)
$\ol{\opn{Desc}} \bigl( \opn{Del}(\g, R) \bigr)$.

Suppose $\h =  \{ \h^p \}_{p \in \N}$ is another cosimplicial quantum type DG
Lie algebra. A cosimplicial
$\mrm{L}_{\infty}$ morphism $\Psi : \g \to \h$ 
is a collection $\Psi^p : \g^p \to \h^p$ of $\mrm{L}_{\infty}$ morphisms,
that respect the cosimplicial structures. We say that $\Psi$ is a cosimplicial
{\em $\mrm{L}_{\infty}$ quasi-isomorphism} if each $\Psi^p : \g^p \to \h^p$
is an $\mrm{L}_{\infty}$ quasi-isomorphism. 

\begin{thm}[Equivalence for Lie Descent] \label{thm:intro.103}
Let $\Psi : \g \to \h$ be a cosimplicial $\mrm{L}_{\infty}$ morphism between 
cosimplicial quantum type DG Lie algebras, and let 
$\nu : R \to S$ be a homomorphism between parameter algebras,
all over a field $\K$ of characteristic $0$. Then there is a function 
\[ \ol{\opn{Desc}} (\opn{Del})(\Psi, \nu)  : 
\ol{\opn{Desc}} ( \opn{Del}(\g, R) ) \to 
\ol{\opn{Desc}} ( \opn{Del}(\h, S) ) , \]
which is functorial in $\Psi$ and $\nu$.
Furthermore, if $\Psi$ is a cosimplicial  $\mrm{L}_{\infty}$ quasi-isomorphism
and $\nu$ is an isomorphism, then 
$\ol{\opn{Desc}} (\opn{Del})(\Psi, \nu)$ is bijective. 
\end{thm}

This is repeated with more details as Theorem \ref{thm:Lie-desc.101} in the body
of the paper. The proof uses Theorem
\ref{thm:intro.100}, and results from \cite{Ye7} on the Deligne crossed groupoid
and on the {\em bar-cobar construction} for DG Lie algebras.

\subsection{Geometric Descent} \label{subsec:geodesc}
Let 
$\twocat{P} = ( \twocat{P}_1, \twocat{P}_2, 
\opn{Ad}_{\twocat{P}_1 \crvar \twocat{P}_{2}}, \opn{D} )$
be some stack of crossed groupoids on a topological space
$X$. We denote the set of twisted objects of $\twocat{P}$ by
$\opn{TwOb}(\twocat{P})$. 
There is an intrinsic notion of twisted gauge equivalence between 
twisted objects (see Definition \ref{dfn:33}), and we denote the resulting set
of equivalence classes by 
$\ol{\opn{TwOb}}(\twocat{P})$. 

Let $\bsym{U} = \{ U_k \}_{k \in K}$  be an open covering of the space $X$.
For $k_0, \ldots, k_m \in K$ we write 
$U_{k_0, \ldots, k_m} := U_{k_0} \cap \cdots \cap U_{k_m}$.
A {\em geometric descent datum} for twisted objects of $\twocat{P}$, relative to
this covering, consists of a collection 
$\{ \mcal{A}_{k_0} \}_{k_0 \in K}$ of objects 
$\mcal{A}_{k_0} \in \opn{Ob}(\twocat{P}(U_{k_0}))$, a collection 
$\{ g_{k_0, k_1} \}$ of $1$-morphisms
\[ g_{k_0, k_1} : \mcal{A}_{k_0}|_{U_{k_0, k_1}} \iso 
\mcal{A}_{k_1}|_{U_{k_0, k_1}} \]
in $\twocat{P}_1(U_{k_0, k_1})$, and a collection
$\{ a_{k_0, k_1, k_2} \}$ of $2$-morphisms 
\[ a_{k_0, k_1, k_2} \in 
\twocat{P}_2(U_{k_0, k_1, k_2})(\mcal{A}_{k_0}|_{U_{k_0, k_1, k_2}}) . \]
The conditions are:
\[ g_{k_0, k_2}^{-1} \circ g_{k_1, k_2} \circ g_{k_0, k_1} =
\opn{D}(a_{k_0, k_1, k_2}) \]
as automorphisms of $\mcal{A}_{k_0}|_{U_{k_0, k_1, k_2}}$
in $\twocat{P}_1(U_{k_0, k_1, k_2})$, and 
\[ a_{k_0, k_1, k_3}^{-1} \cdot a_{k_0, k_2, k_3} \cdot 
a_{k_0, k_1, k_2} = 
\opn{Ad}_{\twocat{P}_1 \crvar \twocat{P}_{2}}(g_{k_0, k_1}^{-1})
(a_{k_1, k_2, k_3}) \]
in the group
$\twocat{P}_2(U_{k_0, \ldots, k_3})
(\mcal{A}_{k_0}|_{U_{k_0, \ldots, k_3}})$.

The above can be stated in terms of combinatorial descent (see Subsection
\ref{subsec:combinat}). 
For any open set $U \subset X$ we have a crossed groupoid
$\twocat{P}(U)$. Thus, given an open
covering $\bsym{U}$, the \v{C}ech construction gives rise to a cosimplicial
crossed groupoid $\opn{C}(\bsym{U}, \twocat{P})$. A geometric descent
datum is precisely an element of 
$\opn{Desc}(\opn{C}(\bsym{U}, \twocat{P}))$.

A consequence of Theorem \ref{thm:17} is this result (it is a combination of 
Corollary \ref{cor:4}, Theorem \ref{thm:glu.1}, Proposition
\ref{prop:bitorsors.6} and Proposition \ref{prop:206} in the body of the
paper): 

\begin{thm}[Decomposition] \label{thm:intro.101}
Let $\K$ be a field of characteristic $0$, $X$ a smooth algebraic variety
over $\K$, $R$ a parameter algebra over $\K$, and $\bsym{U}$ an
affine open covering of $X$. Denote by $\twocat{P}(R, X)$ either of the stacks
of crossed groupoids \lb $\twocat{AssDef}(R, \mcal{O}_X)$ or
$\twocat{PoisDef}(R, \mcal{O}_X)$. Then there is a bijection
\[ \ol{\opn{dec}} : 
\ol{\opn{TwOb}}  \bigl( \twocat{P}(R, X) \bigr) \iso
\ol{\opn{Desc}} \bigl( \opn{C} (\bsym{U}, \twocat{P}(R, X)) \bigr)  \]
called {\em decomposition}, which is functorial with respect to refinements
$\bsym{U}' \to \bsym{U}$ of affine open coverings, and with respect to
homomorphisms $R \to R'$ of parameter algebras.
\end{thm}

The ideas of geometric descent data and decomposition are not new; they appear
in most texts about gerbes (cf.\ \cite[Section IV.3.5]{Gi}, and also
\cite{Br, Ko2, BGNT1, DP1}). However, a result such
as Theorem \ref{thm:17} is usually not available, and the authors usually impose
something like Theorem \ref{thm:17} as a condition (explicitly or
implicitly), or else they are forced to deal with hypercoverings.

What Theorem \ref{thm:intro.101} says is that we can replace the study of
twisted deformations with that of geometric descent data.

\subsection{DG Lie Algebras and Deformations} \label{subsec:geom}
On the smooth algebraic variety $X$ there are two important sheaves of DG Lie
algebras: the sheaf $\mcal{T}_{\mrm{poly}, X}$ of {\em polyderivations}, and the
sheaf $\mcal{D}_{\mrm{poly}, X}$ of {\em polydifferential operators}. 
Within $\mcal{D}_{\mrm{poly}, X}$ there is the subalgebra
$\mcal{D}_{\mrm{poly}, X}^{\mrm{nor}}$ 
of {\em normalized polydifferential operators}. 
The sheaves $\mcal{T}_{\mrm{poly}, X}$, $\mcal{D}_{\mrm{poly}, X}$ and
$\mcal{D}_{\mrm{poly}, X}^{\mrm{nor}}$
are quasi-coherent graded $\OX$-modules, and the inclusion 
$\mcal{D}_{\mrm{poly}, X}^{\mrm{nor}} \to \mcal{D}_{\mrm{poly}, X}$
is a quasi-isomorphism. 

\begin{thm}[{Geometrization, \cite[Theorem 0.1]{Ye9}}] \label{thm:120}
Let $\K$ be a field of characteristic $0$, $X$ a smooth algebraic variety
over $\K$, $R$ a parameter algebra over $\K$, 
and $U$ an affine open set in $X$.
There are equivalences of crossed groupoids
\[  \opn{geo} : \
\opn{Del} \bigl( \Gamma(U, \mcal{D}_{\mrm{poly}, X}^{\mrm{nor}}) , R \big)
\to \cat{AssDef}(R, \mcal{O}_U) \]
and
\[  \opn{geo} : \
\opn{Del} \bigl( \Gamma(U, \mcal{T}_{\mrm{poly}, X}) , R \big)
\to \cat{PoisDef}(R, \mcal{O}_U) \]
which we call {\em geometrization}. The equivalences  $\opn{geo}$
commute with homomorphisms $R \to R'$ of parameter algebras, and with inclusions
of affine open sets $U' \to U$.
\end{thm}

See Definition \ref{dfn:227} regarding equivalences of crossed groupoids.

In earlier versions of this paper the proof of Theorem \ref{thm:120} was
part of the material. However the paper got to be extremely long, and
hence we extracted all the material leading to Theorem
\ref{thm:120}, and made it into the new paper \cite{Ye9}. 
In terms of content, everything involving stacks remained in this paper, and
the rest was moved to \cite{Ye9}. 

The proof of Theorem \ref{thm:120}, in \cite{Ye9}, has several difficult
components. First there is the issue of $\m$-adically complete flat sheaves of
$R$-modules on the variety $X$, that required a lot of basic work 
(some of it in \cite{Ye5}). Regarding associative deformations, the
geometrization required a delicate analysis of sheaves of noncommutative rings 
on an algebraic variety, using {\em Ore localization}. The fact that the DG Lie
algebra $\mcal{D}_{\mrm{poly}, X}^{\mrm{nor}}$ controls  associative
deformations (without any differentiality restriction on these deformations!) 
is a new result. In our earlier paper \cite{Ye1} we had to put a local
differentiality condition on associative deformations (that we now know is
redundant). It is important to note that the corresponding statement 
for complex analytic manifolds is an open problem; see \cite[Remark 2.2.7]{KS2}.

Consider a finite affine open covering $\bsym{U}$ of $X$. The \v{C}ech
construction gives rise to the  cosimplicial quantum type DG Lie algebras
$\opn{C}(\bsym{U}, \mcal{T}_{\mrm{poly}, X})$ and 
$\opn{C}(\bsym{U}, \mcal{D}_{\mrm{poly}, X}^{\mrm{nor}})$. 
The combination of Theorems \ref{thm:120} and \ref{thm:intro.100} yields:

\begin{cor}[Geometrization of Descent Data] \label{cor:110}
Let $\K$ be a field of characteristic $0$, $X$ a smooth algebraic variety
over $\K$, $R$ a parameter algebra over $\K$, 
and $\bsym{U}$ a finite affine open covering of $X$.
There are bijections
\[  \ol{\opn{geo}} : 
\ol{\opn{Desc}} \bigl( \opn{Del} \bigl(
\mrm{C}(\bsym{U}, \mcal{D}_{\mrm{poly}, X}^{\mrm{nor}}) , R 
\bigr) \bigr)
\iso
\ol{\opn{Desc}} \bigl( \opn{C} \bigl(
\twocat{AssDef}(R, \mcal{O}_X) , \bsym{U} \bigr) \bigr) \]
and 
\[  \ol{\opn{geo}} : 
\ol{\opn{Desc}} \bigl( \opn{Del} \bigl(
\mrm{C}(\bsym{U}, \mcal{T}_{\mrm{poly}, X}) , R 
\bigr) \bigr)
\iso
\ol{\opn{Desc}} \bigl( \opn{C} \bigl(
\twocat{PoisDef}(R, \mcal{O}_X) , \bsym{U} \bigr) \bigr) . \]
The functions $\ol{\opn{geo}}$ respect homomorphisms
$R \to R'$ of parameter algebras, and 
refinements of coverings $\bsym{U}' \to \bsym{U}$.
\end{cor}

This is Theorem \ref{thm:11} in the body of the paper.

We know that $\mcal{T}_{\mrm{poly}, X}$ and 
$\mcal{D}_{\mrm{poly}, X}^{\mrm{nor}}$ are sheaves in the \'etale topology of
$X$. This, with Corollary \ref{cor:110} and  Theorem \ref{thm:intro.101}, imply
that 
$\ol{\opn{TwOb}}  \bigl( \twocat{P}(R, X) \bigr)$
is contravariant with respect to \'etale morphisms $X' \to X$.
The set $\ol{\opn{TwOb}}  \bigl( \twocat{P}(R, X) \bigr)$
is also covariant with respect to homomorphisms $R \to R'$ of parameter
algebras. See Theorem \ref{thm:7} for the precise statement.

\subsection{The Formality Morphism}
Given a finite affine open covering $\bsym{U}$ of $X$,
we can form the {\em mixed resolution} 
$\opn{Mix}_{\bsym{U}}(\mcal{M})$ of any 
quasi-coherent sheaf $\mcal{M}$. This resolution is functorial, and moreover 
the complex $\opn{Mix}_{\bsym{U}}(\mcal{M})$ is acyclic for the global sections
functor $\Gamma(X, -)$. 
Resolving the sheaves $\mcal{T}_{\mrm{poly}, X}$ and 
$\mcal{D}_{\mrm{poly}, X}$ we get sheaves of  DG Lie algebras 
$\opn{Mix}_{\bsym{U}}(\mcal{T}_{\mrm{poly}, X})$ and
$\opn{Mix}_{\bsym{U}}(\mcal{D}_{\mrm{poly}, X})$. 
See \cite[Proposition 6.3]{Ye1} for details.

Assume $\opn{dim} X = n$. An {\em \'etale coordinate system} on an open set 
$U \subset X$ is an \'etale morphism $s : U \to \mbf{A}^n_{\K}$. 
By {\em open covering with \'etale coordinates} of $X$ we mean 
a pair $(\bsym{U}, \bsym{s})$, consisting of an open covering
$\bsym{U} = \{ U_k \}$, together with a family $\bsym{s} = \{ s_k \}$ of 
\'etale coordinate systems $s_k : U_k \to \mbf{A}^n_{\K}$.

One of the main results of \cite{Ye1} is that there is an 
$\mrm{L}_{\infty}$ quasi-isomorphism
\[ \Psi_{\bsym{s}} = \{ \Psi_{\bsym{s}; i} \}_{i \geq 1} :
\opn{Mix}_{\bsym{U}}(\mcal{T}_{\mrm{poly}, X}) \to 
\opn{Mix}_{\bsym{U}}(\mcal{D}_{\mrm{poly}, X})  \]
between sheaves of DG Lie algebras on $X$. 
This is \cite[Theorem 0.2]{Ye1}, and it relies on the universal formality
morphism of Kontsevich from \cite{Ko1}. See \cite[Theorem 1.1]{VdB} for
another proof. In Theorem \ref{thm:13} in the body of the paper we expand
\cite[Theorem 0.2]{Ye1}. As a consequence we obtain the next theorem, which is
repeated, with more details, as Theorem \ref{thm:221}.

\begin{thm}[Cosimplicial Formality] \label{thm:22}
Assume the field $\K$ contains $\R$.
Let $X$ be a smooth algebraic variety over $\K$, and let
$(\bsym{U}, \bsym{s})$ be a finite affine open covering with
\'etale coordinates of $X$. Then there is a diagram
\[ \UseTips \xymatrix @C=12ex @R=5ex {
\mrm{C}(\bsym{U}, \mcal{T}_{\mrm{poly}, X})
\ar[d]
&
\mrm{C}(\bsym{U}, \mcal{D}_{\mrm{poly}, X}^{\mrm{nor}})
\ar[d]
\\
\mrm{C} \bigl( \bsym{U} , \opn{Mix}_{\bsym{U}}(\mcal{T}_{\mrm{poly}, X})
\bigr)
\ar[r]^{\mrm{C}(\bsym{U} , \Psi_{\bsym{s}})} 
&
\mrm{C} \bigl( \bsym{U} , \opn{Mix}_{\bsym{U}}(\mcal{D}_{\mrm{poly}, X})
\bigr) \ ,
} \]
where the objects are cosimplicial quantum type DG Lie algebras, the
vertical arrows are cosimplicial DG Lie quasi-isomorphisms, and the horizontal
arrow is a cosimplicial $\mrm{L}_{\infty}$ quasi-isomorphism. 
This diagram is independent \tup{(}up to cosimplicial quasi-isomorphism\tup{)}
of the covering $(\bsym{U}, \bsym{s})$, and respects \'etale morphisms 
$X' \to X$.
\end{thm}

The reason for the condition $\R \subset \K$ is because in \cite{Ye1}
we used the original universal quantization morphism of Konstevich \cite{Ko1}. 
See Remark \ref{rem:TDQ.101} for a discussion on how to relax this condition.

\subsection{The Twisted Quantization Theorem} \label{subsec:TDQ}
There is a notion of {\em twisted gauge equivalence} between  
twisted associative $R$-deformation of $\mcal{O}_X$; this is the same as the 
twisted gauge equivalence in 
$\opn{TwOb}(\twocat{P})$ for 
$\twocat{P} := \twocat{AssDef}(R, \mcal{O}_X)$.
Likewise for twisted Poisson deformations. 

A twisted deformation $\gerbe{A}$ induces a {\em first order bracket}
$\{ -,- \}_{\gerbe{A}}$ on $\mcal{O}_X$.
See Definition \ref{dfn:22}, or the text preceding Example \ref{exa:23} above.

Here is the main result of our paper:

\begin{thm}[Twisted Quantization] \label{thm:14}
Let $\K$ be a field containing the real numbers, let $R$ be a 
parameter $\K$-algebra, and let $X$ be a smooth algebraic variety over
$\K$. Then there is a bijection of sets
\[ \begin{aligned}
& \opn{tw{.}quant} : 
\frac{ \{ \tup{twisted Poisson $R$-deformations of 
$\mcal{O}_X$} \} }
{\tup{ twisted gauge equivalence}} \\[0.3em]
& \hspace{14ex} \xar{\ \cong \ }  \
\frac{ \{ \tup{twisted associative $R$-deformations of 
$\mcal{O}_X$} \} }
{\tup{ twisted gauge equivalence}}
\end{aligned} \] 
called {\em twisted quantization}. 
The bijection $\opn{tw{.}quant}$ preserves first order brackets, and commutes
with homomorphisms of parameter algebras $R \to R'$, and with  \'etale morphisms
of varieties $X' \to X$.
\end{thm}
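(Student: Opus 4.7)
The plan is to reduce the claimed bijection to a globalized version of Kontsevich's formality theorem, mediated by the Maurer--Cartan calculus for pronilpotent DG Lie algebras.

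First I would set up a Deligne-style dictionary: equivalence classes of twisted Poisson (resp.\ associative) $R$-deformations of $\mcal{O}_X$ should be identified with equivalence classes of Maurer--Cartan elements in a DG Lie algebra $\mfrak{g}_{\mrm{P}}(X,R)$ (resp.\ $\mfrak{g}_{\mrm{A}}(X,R)$) built as a Thom--Whitney style cosimplicial normalization, over a sufficiently fine affine open cover $\{ U_i \}$ of $X$, of the sheaf of polyvector fields $\mcal{T}^{\bullet}_{\mrm{poly},X}$ (resp.\ polydifferential operators $\mcal{D}^{\bullet}_{\mrm{poly},X}$), tensored with $\m$ to enforce nilpotence. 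The key observation is that the stack-like content of a twisted deformation $\gerbe{A}$ decomposes exactly into the cosimplicial components of a Maurer--Cartan element: the local deformations $\mcal{A}_i$ live in Čech degree $0$, their gluing $1$-isomorphisms on double intersections in Čech degree $1$, and the $2$-cocycle datum on triple intersections in Čech degree $2$. Twisted gauge equivalence then corresponds exactly to gauge equivalence of Maurer--Cartan elements.

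Second, I would produce a canonical $L_\infty$ quasi-isomorphism $\mcal{U} : \mfrak{g}_{\mrm{P}}(X,R) \to \mfrak{g}_{\mrm{A}}(X,R)$. Kontsevich's formality theorem supplies an $L_\infty$ quasi-isomorphism between $\mcal{T}^{\bullet}_{\mrm{poly}}$ and $\mcal{D}^{\bullet}_{\mrm{poly}}$ on an affine chart once formal coordinates are chosen, but the resulting formula is coordinate-dependent. To make it canonical and global I would use a Fedosov-style construction: pass to the sheaf of formal jets at all points of $X$, where a universal family of formal coordinates exists up to the action of formal diffeomorphisms, apply Kontsevich's $L_\infty$ morphism fiberwise, and descend through the Thom--Whitney normalization. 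The induced map on Maurer--Cartan equivalence classes is the twisted quantization map $\opn{tw{.}quant}$; it is a bijection because $\mcal{U}$ is an $L_\infty$ quasi-isomorphism of pronilpotent DG Lie algebras. Preservation of first-order brackets follows because the linear Taylor coefficient $\mcal{U}_1$ is the Hochschild--Kostant--Rosenberg map, and naturality in $R \to R'$ and in étale morphisms $X' \to X$ follows from the functoriality of each ingredient.

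The principal obstacle is the globalization step. The coordinate-dependence of the local Kontsevich formula obstructs gluing local strict quantizations into a strict global associative deformation; this discrepancy becomes, in the cosimplicial picture, a nontrivial $1$-cocycle with coefficients in the gauge Lie algebra, and it is precisely the datum absorbed by the twisted framework. This is why the target of $\opn{tw{.}quant}$ must consist of \emph{twisted} deformations rather than strict ones. Verifying that Thom--Whitney descent preserves $L_\infty$ quasi-isomorphisms, and that the whole construction respects the first-order bracket and the various functorialities, should then be a careful but essentially routine check within the formal-geometric tradition.
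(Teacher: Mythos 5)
Your overall strategy coincides with the paper's: decompose twisted deformations over an affine cover into descent data, encode these as Maurer--Cartan data in a cosimplicial DG Lie algebra built from $\mcal{T}_{\mrm{poly}}$ resp.\ $\mcal{D}^{\mrm{nor}}_{\mrm{poly}}$, globalize Kontsevich's $\mrm{L}_\infty$ morphism through a jet/connection construction (the paper's mixed resolution $\opn{Mix}_{\bsym{U}}$, built from $\Omega_X \otimes_{\mcal{O}_X} \mcal{P}_X$ with the Grothendieck connection, is exactly your Fedosov-style step), and transport MC classes across the resulting $\mrm{L}_\infty$ quasi-isomorphism.

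There is, however, a genuine gap at your first step, and it is precisely the point the paper isolates as its hardest ingredient. You assert that the descent data of a twisted deformation (local deformations, gluing isomorphisms on double overlaps, a $2$-cocycle correction on triple overlaps) ``decompose exactly into the cosimplicial components of a Maurer--Cartan element'' of the Thom--Whitney normalization. They do not, at least not tautologically. A degree-$1$ element of $\m \hatotimes{\K} \til{\mrm{N}} \g$ has components $\beta^p_l \in \m \hatotimes{\K} \Omega^p(\bsym{\Delta}^l_{\K}) \hatotimes{\K} \g^{l,1-p}$ --- polynomial differential forms on simplices --- not elements of $\g^{p,1-p}$; one must integrate over faces of the simplices to extract group-level descent data, and then verify that the integrated data satisfy the nonabelian $1$- and $2$-cocycle conditions, in particular the tetrahedron identity on quadruple overlaps. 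This is the content of Theorem \ref{thm:9}, proved in the separate paper [Ye7] by means of nonabelian multiplicative integration on surfaces and a $3$-dimensional Stokes theorem. The tension you cannot avoid is the following: with the naive \v{C}ech total complex the descent-data dictionary is transparent, but that complex does not carry a DG Lie structure compatible with the $\mrm{L}_\infty$ transfer; with the Thom--Whitney normalization (which you must use to apply Kontsevich's formality), the $\mrm{L}_\infty$ step works but the dictionary with twisted deformations becomes a substantial theorem rather than an observation. Your proposal silently assumes the favorable features of both models; supplying the bridge between them is the missing content.
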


This theorem is repeated in greater detail as Theorem \ref{thm:8}.
It is proved by assembling Theorem \ref{thm:intro.101}, Corollary
\ref{cor:110}, Theorem \ref{thm:22} and Theorem \ref{thm:intro.103}.
The roles of these results can be seen in the diagram in Figure 
\ref{fig:diagram}. 
Specifically, the  bijections $\ol{\opn{dec}}$ in the diagram are from 
Theorem \ref{thm:intro.101}; the  bijections $\ol{\opn{geo}}$ are from 
Corollary \ref{cor:110}; and the bijections 
$\ol{\opn{mix}}$ and $\ol{\Psi}_{\bsym{s}}$ are gotten by applying 
Theorem \ref{thm:intro.103} to the diagram of cosimplicial 
$\mrm{L}_{\infty}$ quasi-isomorphisms in Theorem \ref{thm:22}.

\begin{figure}
\[ \UseTips \xymatrix @C=5ex @R=5ex {
\ol{\opn{TwOb}} \bigl( \twocat{PoisDef}(R, \mcal{O}_X) \bigr)
\ar[d]_{\ol{\opn{dec}}}
\ar@{-->}[r]^{\opn{tw{.}quant}}
&
\ol{\opn{TwOb}} \bigl( \twocat{AssDef}(R, \mcal{O}_X) \bigr)
\ar[d]^{\ol{\opn{dec}}}
\\
\ol{\opn{Desc}} \bigl( 
\mrm{C}( \bsym{U}, \twocat{PoisDef}(R, \mcal{O}_X) ) \bigr)
&
\ol{\opn{Desc}} \bigl( 
\mrm{C}( \bsym{U}, \twocat{AssDef}(R, \mcal{O}_X) ) \bigr)
\\
\ol{\opn{Desc}}  \bigl( \opn{Del}
\bigl( \mrm{C}(\bsym{U}, \mcal{T}^{}_{\mrm{poly}, X}), R \bigr) \bigr)
\ar[u]^{\ol{\opn{geo}}}
\ar[d]_{\ol{\opn{mix}}}
&
\ol{\opn{Desc}} \bigl( \opn{Del} \bigr(
\mrm{C}(\bsym{U}, \mcal{D}^{\mrm{nor}}_{\mrm{poly}, X}) , R \bigr) \bigr)
\ar[u]_{\ol{\opn{geo}}}
\ar[d]^{\ol{\opn{mix}}}
\\
\ol{\opn{Desc}} \bigl( \opn{Del} \bigl(
\mrm{C}(\bsym{U}, \opn{Mix}_{\bsym{U}}(\mcal{T}_{\mrm{poly},X}) ), R \bigr)
\bigr)
\ar[r]^{\ol{\Psi}_{\bsym{s}}}
& 
\ol{\opn{Desc}} \bigl( \opn{Del} \bigl(
\mrm{C}(\bsym{U}, \opn{Mix}_{\bsym{U}}(\mcal{D}_{\mrm{poly}, X}) ), R \bigr)
\bigr)
} \]
\caption{Diagram of bijections in the proof of Theorem \ref{thm:14}.}
\label{fig:diagram}
\end{figure}

In Corollaries \ref{cor:TDQ.105} and \ref{cor:TDQ.102} we explain what happens
when some or all of the cohomology groups
$\mrm{H}^1(X, \mcal{O}_X)$, $\mrm{H}^2(X, \mcal{O}_X)$, 
$\mrm{H}^1(X, \mcal{T}_X)$ and $\mrm{H}^1(X, \mcal{D}_X)$
vanish.

A twisted $R$-deformation $\gerbe{A}$ of $\mcal{O}_X$ is said to be {\em really
twisted} if it has no global objects, namely if the groupoid $\gerbe{G}(X)$ is
empty (where $\gerbe{G}$ is the gauge gerbe of  $\gerbe{A}$).

Any deformation $\mcal{A}$ generates a twisted
deformation $\gerbe{A}$, and $\mcal{A}$ is a global object of
$\gerbe{A}$. See Example \ref{exa:19}. Any twisted deformation $\gerbe{A}$
that is not really twisted is of this form.

One consequence of Theorem \ref{thm:6} is that in the differentiable
setup (and $\K = \mbb{R}$) there are no really twisted
$R$-deformations. 
However, when $X$ is a complex analytic manifold (and $\K = \mbb{C}$),
there do exist really twisted $R$-deformations. Presumably our methods
(with minor adjustments) should work also for complex analytic
manifolds, and a result similar to Theorem \ref{thm:14} should hold.

Here is a question that we find intriguing. 

\begin{que}
Let $X$ be a Calabi-Yau surface, with symplectic Poisson bracket 
$\{ -,- \}$. Consider the Poisson $\K[[\hbar]]$-deformation 
$\mcal{A}$ of $\mcal{O}_X$ that we get from Example \ref{exa:23}, and the 
twisted Poisson deformation $\gerbe{A}$ generated by $\mcal{A}$.
Let $\gerbe{B} := \opn{tw{.}quant} (\gerbe{A})$,
which is a twisted associative $\K[[\hbar]]$-deformation 
of $\mcal{O}_X$, well-defined up to twisted gauge equivalence. Is
$\gerbe{B}$ really twisted? 
\end{que}

This question is repeated (with a few more details) at the end of Section
\ref{sec:TDQ}, along with a some other questions.

\subsection{Discussion of Related Work} \label{subsec:discuss}
We finish the introduction with a discussion of several related papers.
This is not an easy task, since these papers were written by people working in
several mathematical branches -- differential geometry, complex
analytic geometry, algebraic geometry and mathematical physics -- and the style
of writing varies accordingly.  

The impetus for our work was trying to understand the ideas 
proposed by Kontsevich in \cite{Ko2}. In that paper Kontsevich considered a
smooth algebraic variety $X$ over a field of characteristic $0$. He claimed
that any formal Poisson deformation of $\OO_X$ can be quantized to a stack of
algebroids. However there was only a hint of a proof in \cite{Ko2}, and there
was no statement regarding classification. We found this new idea of Kontsevich
extremely interesting, and wanted to prove it, and to expand it to a
precise classification statement (generalizing the affine case). We had several
conversations with Kontsevich, in which he offered some valuable suggestions,
that eventually led us to the concept of twisted Poisson deformation 
(see Subsection \ref{subsec:tw.defs}). The precise statement
about twisted deformation quantization of an
algebraic variety appears for the first time in this paper (this is 
Theorem \ref{thm:14}); and ours is the only paper that gives a complete proof
of this result. 

As we already mentioned, Kashiwara \cite{Ka} was the first to consider 
stacky quantizations. The situation in \cite{Ka} was this: $X$ is a complex
contact manifold, so
there is an open covering $X = \bigcup U_i$, where each $U_i$ is
isomorphic to an open submanifold of a projective cotangent bundle. Thus 
on $U_i$ there is a sheaf $\mcal{E}_i$ of microdifferential operators.
Kashiwara proved that even if the  $\mcal{E}_i$ cannot be glued into a
sheaf of rings, they can always be glued into a twisted sheaf (i.e.\
a stack of algebroids) $\gerbe{E}$. 

The paper \cite{KS2} by Kashiwara and P. Schapira discusses a variant 
of the stack of  microdifferential operators, that is called a {\em DQ
algebroid}. The description of a DQ algebroid $\gerbe{A}$ as a twisted object
is explained in Remark \ref{rem:6}.
There is some overlap regarding technical points between our paper and
\cite{KS2} (see Remark \ref{rem:221}, and the Introduction of \cite{Ye9}). 
The main thrust of \cite{KS2} is the study of the Hochschild homology sheaf 
associated to a symplectic DQ algebroid $\gerbe{A}$, and related 
characteristic classes. It does not contain any sort of twisted quantization
statement similar to our Theorem \ref{thm:14}.

P. Polesello and A. D'Agnolo  \cite{Po, DP1, DP2} studied twisted associative
deformations of complex manifolds, and quantizations of contact manifolds, in
the spirit of  \cite{Ka, KS2}. The relationship between \cite{DP2}
and our paper is mostly that in both papers heavy use is made of stacks of
crossed groupoids as a means of classifying twisted forms of stacks (as in
Theorem \ref{thm:intro.101} above). The paper \cite{Po} classifies twisted
associative deformations corresponding to a fixed  symplectic structure; there
is nothing in it resembling our Theorem \ref{thm:14}. One should also note (this
was pointed to us by Polesello) that the deformations in \cite{Po} and
\cite{KS2} are not augmented (see Remark \ref{rem:6}). 

E. Getzler wrote two papers \cite{Ge1, Ge2} on higher Deligne groupoids. 
The relevant results in \cite{Ge2}, and a certain difficulty in that paper, are
explained in Remark \ref{rem:Lie-desc.101}. 
For some time we had hoped to rely on Getzler's results for a proof of 
Theorem \ref{thm:intro.103} (at least for artinian parameter algebras), but we
could not find a satisfactory way to overcome the difficulty mentioned
in Remark \ref{rem:Lie-desc.101}. This is why we eventually provided our
own proof, that does not rely on Getzler's work.
 
P. Bressler, A. Gorokhovsky, R. Nest and B. Tsygan wrote a series of papers on
deformations of real and complex manifolds. In the paper \cite{BGNT1}
there is a discussion of cosimplicial DG Lie
algebras, with results similar to our Theorem \ref{thm:intro.103},
but only for DG Lie homomorphisms
$\g \to \h$ between cosimplicial DG Lie algebras (not $\mrm{L}_{\infty}$
morphisms), and only for artinian parameter algebras. 
Nonetheless several results in \cite{BGNT1} inspired us when we discovered
Theorems \ref{thm:intro.100} and \ref{thm:intro.103}.
Stacks -- in the true geometric sense -- do
not actually occur in \cite{BGNT1}; the authors only look at what we refer to
in Subsection \ref{subsec:geodesc} 
as geometric descent data. Also there is no twisted quantization result
similar to our Theorem \ref{thm:14}; in a large part of the paper
they only consider a particular symplectic Poisson bracket and study its
twisted associative quantizations. 

The paper \cite{BGNT2} considers twisted associative deformations
of twisted $\OO_X$-algebroids on a real or complex  manifold $X$. If we
understand correctly, the associative deformations considered in \cite{BGNT2}
are by definition locally differential, so the authors avoid touching
the hard technical issues lying behind our Theorem \ref{thm:120} 
(see discussion in Subsection \ref{subsec:geom}).
The paper \cite{BGNT2} does not contain a twisted quantization result
similar to our Theorem \ref{thm:14}. As far as we can tell one of the main
results there, namely \cite[Theorem 5.22]{BGNT2}, is analogous to our Corollary
\ref{cor:110} in the associative case. 

The third paper by this team of authors is \cite{BGNT3}. 
They consider several kinds of nerves (simplicial sets) associated with
$2$-groupoids, and prove these are homotopy equivalent. One of the methods used
in \cite{BGNT3} is nonabelian multiplicative integration of surfaces, as in our
paper \cite{Ye6}; indeed, they reproduce some of our results (with shorter
proofs in special cases). Among other things, it is claimed in \cite{BGNT3}
that the difficulty in \cite{Ge2} mentioned above can be settled (more on this
in Remark \ref{rem:Lie-desc.101}). 
It should be interesting to compare the results of \cite{BGNT3} to our Theorems 
\ref{thm:intro.100} and \ref{thm:intro.103}.

The fourth paper in this series is \cite{BGNT4}. Here they look at a 
$\mrm{C}^{\infty}$ manifold $X$ over $\K = \R$, with structure sheaf $\OO_{X}$. 
They start with a twist $\gerbe{B}_0$ of $\OO_{X}$, namely 
a stack of $\R$-algebroids $\gerbe{B}_0$ on $X$ that is locally equivalent to 
$\OO_{X}$. Such twists are classified by cohomology classes 
$[\phi] \in \opn{H}^3_{\mrm{dR}}(X)$.
Let $R$ be an artinian parameter $\R$-algebra with maximal ideal $\m$.  
Theorem 1.1 of \cite{BGNT4} says that associative 
$R$-deformations $\gerbe{B}$ of $\gerbe{B}_0$ are classified by 
the Deligne crossed groupoid of an $\mrm{L}_{\infty}$ algebra, 
whose underlying graded $\R$-module is 
$\Ga(X, \mcal{T}_{\mrm{poly}, X}) \ot \m$.
The binary operation is the usual one, but there is a ternary operation 
coming from a closed $3$-form $\phi$ on $X$ that represents the de Rham class 
$[\phi]$ of $\gerbe{B}_0$. This means that stacky associative $R$-deformations 
$\gerbe{B}$ correspond to $R$-linear $\phi$-Poisson structures on $X$, in the 
sense of Severa and Weinstein (cf.\ \cite{Se}). In this way
\cite[Theorem 1.1]{BGNT4} can be viewed as describing a twisted quantization 
operation.  However, the stackyness of an $R$-deformation $\gerbe{B}$ is the 
same as that of $\gerbe{B}_0 = \R \ot_{R} \gerbe{B}$. Thus this result is 
``perpendicular'' to our twisted quantization from Theorem \ref{thm:14}: for us 
the algebroid $\gerbe{B}_0$ is always trivial (i.e.\ $\gerbe{B}_0 = \OO_X$), and 
the stackyness is only in the $R$-linear deformation $\gerbe{B}$. 

The last paper to be discussed is \cite{CH}. The aim of that paper is
to prove a twisted deformation quantization like our Theorem \ref{thm:14}.
However in \cite{CH} the authors do not consider actual geometric twisted
deformations, but only the corresponding Lie descent
data. Thus they do not have any kind of geometrization result like our Corollary
\ref{cor:110}, nor a decomposition result like our Theorem 
\ref{thm:intro.101}. Their main result, \cite[Theorem 3.5]{CH}, 
seems to assert (in our notation, and when $X$ is a
smooth algebraic variety) that there is a bijection
\[ \ol{\opn{Desc}} \bigl( \opn{Del} \bigl(
\mrm{C}(\bsym{U}, \mcal{T}_{\mrm{poly}, X}) , \K[[\hbar]] 
\bigr) \bigr)
\ \cong \
\ol{\opn{Desc}} \bigl( \opn{Del} \bigl(
\mrm{C}(\bsym{U}, \mcal{D}_{\mrm{poly}, X}^{\mrm{nor}}) , \K[[\hbar]]  
\bigr) \bigr) . \]
Thus it is only a part of our Theorem \ref{thm:14} (the bottom half of the
diagram in Figure \ref{fig:diagram}).
We have some reservations regarding the validity of the proofs in \cite{CH},
for these reasons: 
the proofs in \cite{CH} are not all clear; delicate issues about pronilpotent
groupoids (such as those dealt with in the paper \cite{Ye7}) are ignored; and
there are some specific problems in homotopy theory (cf.\ Remarks
\ref{rem:cosim.106} and \ref{rem:Lie-desc.101}).

\medskip \noindent
\textbf{Acknowledgments.}
Work on this paper began together with Fredrick Leitner, and I wish to
thank him for his contributions. Many of the ideas in this paper are influenced
by the work of Maxim Kontsevich, and I am grateful to him for discussing this
material with me. Thanks also to Michael Artin, Pavel Etingof, Damien
Calaque, Michel Van den Bergh, Pierre Deligne, Lawrence Breen, Pierre Schapira,
Assaf Hasson, George Bergman, James Stasheff, Vladimir Hinich, Pietro
Polesello, Francois Petit and Matan Prasma for their assistance on various
aspects of this project. 
Finally I wish to thank the anonymous referee for reading the paper carefully 
and suggesting some further references.

\section{Background on Crossed Groupoids and Stacks}
\label{sec:back}
\numberwithin{equation}{section}

In this paper we work a lot with stacks on topological spaces. The reader can
consult our earlier paper \cite{Ye4}, that contains a pretty detailed account
of $2$-categories, stacks and gerbes, with the same notation (Sections 1-2 of
that paper). 

Regarding set theoretic considerations, we employ Grothendieck universes. We 
consider a universe $\cat{U}$, that we call the small universe. Elements of
$\cat{U}$ are called small sets. By default all groups, rings etc.\ are small,
i.e.\ their underlying sets are small. And by default all categories $\cat{C}$
are $\cat{U}$-categories, i.e.\ $\opn{Ob}(\cat{C}) \subset \cat{U}$, and 
$\opn{Hom}_{\cat{C}}(i, j) \in \cat{U}$
for all $i, j \in \opn{Ob}(\cat{C})$.  Thus $\cat{Set}$, $\cat{Grp}$ etc.\
denote the categories of small sets, small groups etc., and they are
$\cat{U}$-categories. However some of the categories will be small categories;
a category $\cat{C}$ is small if $\opn{Ob}(\cat{C})$ is a small set.
There is also a universe $\cat{V}$, that we call the big universe, and 
$\cat{U}$ is an element of $\cat{V}$. Elements of $\cat{V}$ will be called big
sets. The $2$-category 
$\twocat{Cat}$ of all $\cat{U}$-categories is a $\cat{V}$-category (namely 
$\opn{Ob}(\twocat{Cat}) \subset \cat{V}$).  

Recall that a {\em groupoid} is a category in which all morphisms are
invertible. Given a groupoid $G$ and objects $i, j \in \opn{Ob}(G)$
we usually write
$G(i, j) := \opn{Hom}_{G}(i, j)$,
the set of morphisms from $i$ to $j$; and 
$G(i) := G(i, i)$, the automorphism group of the object $i$. 

\begin{dfn} \label{dfn:204}
Let $G$ be a groupoid.
\begin{enumerate}
\item Let $i, j \in \opn{Ob} (G)$ and $g \in G(i, j)$.  We define an isomorphism
of groups
\[ \opn{Ad}_{G}(g) : G(i) \iso G(j)  \]
by the formula
$\opn{Ad}_{G}(g)(h) := g \circ h \circ g^{-1}$
for $h \in G(i)$. 

\item The functor 
$\opn{Aut}_{G} : G \to \cat{Grp}$
is defined as follows: on objects $\opn{Aut}_{G}(i) := G(i)$, and on morphisms 
$\opn{Aut}_{G}(g) := \opn{Ad}_{G}(g)$. 

\item Let $H$ be another groupoid, and let $\Phi : G \to H$ be a functor. 
Define the natural transformation 
\[ \opn{Aut}_{\Phi} : \opn{Aut}_{G} \twoto \opn{Aut}_{H} \circ \, \Phi \]
of functors $G \to \cat{Grp}$ to be the group homomorphism
$\opn{Aut}_{\Phi} := \Phi : G(i) \to H(\Phi(i))$, for $i \in \opn{Ob}(G)$.
\end{enumerate} 
\end{dfn}

Suppose $G$ and $N$ are groupoids, such that 
$\opn{Ob}(N) = \opn{Ob}(G)$. 
An {\em action} $\Psi$ of $G$ on 
$N$ is a collection of group isomorphisms 
$\Psi(g) : N(i) \iso N(j)$
for all $i, j \in \opn{Ob}(G)$ and $g \in G(i, j)$, 
such that 
$\Psi(h \circ g) = \Psi(h) \circ \Psi(g)$
whenever $g$ and $h$ are composable, and 
$\Psi(1_i)$ is the identity automorphism of the group $N(i)$. 
The prototypical example is the action $\opn{Ad}_{G}$ of $G$ on itself. 

\begin{dfn} \label{dfn:cosim.101}
A {\em crossed groupoid} is a structure 
\[ G = 
( G_1, G_2, 
\opn{Ad}_{G_1 \crvar G_{2}}, \opn{D} ) \]
consisting of:
\begin{itemize}
\item Groupoids $G_1$ and $G_2$, such that 
$G_2$ is totally disconnected, and 
$\opn{Ob}(G_1) = \opn{Ob}(G_2)$. 
We write $\opn{Ob}(G) := \opn{Ob}(G_1)$. 

\item An action $\opn{Ad}_{G_1 \crvar G_{2}}$ of 
$G_1$ on $G_2$, called the {\em twisting}. 

\item A morphism of groupoids (i.e.\ a functor) 
$\opn{D} : G_2 \to G_1$
called the {\em feedback}, which is the identity on objects.
\end{itemize}

These are the conditions:
\begin{enumerate}
\rmitem{i} The morphism $\opn{D}$ is $G_1$-equivariant with respect to
the actions $\opn{Ad}_{G_1 \crvar G_{2}}$ and
$\opn{Ad}_{G_1}$. Namely 
\[ \opn{D}(\opn{Ad}_{G_1 \crvar G_{2}}(g)(a)) =
\opn{Ad}_{G_1}(g)(\opn{D}(a)) \]
in the group $G_1(j)$, for any $i, j \in \opn{Ob}(G)$, 
$g \in G_1(i, j)$ and $a \in G_2(i)$.

\rmitem{ii} For any $i \in \opn{Ob}(G)$ and 
$a \in G_2(i)$ there is equality
\[ \opn{Ad}_{G_1 \crvar G_{2}}(\opn{D}(a)) =
\opn{Ad}_{G_2(i)}(a) , \]
as automorphisms of the group $G_2(i)$.
\end{enumerate}
\end{dfn}

We sometimes refer to morphisms in the groupoid $G_1$ as {\em
$1$-morphisms}, and to the morphisms in $G_2$ as {\em
$2$-morphisms}. 

For later use we record the next facts (that are almost immediate from the
definitions). 

\begin{prop} \label{prop:200}
Let 
$G = ( G_1, G_2, \opn{Ad}_{G_1 \crvar G_{2}}, \opn{D} )$
be a crossed groupoid. 
\begin{enumerate}
\item For $i \in \opn{Ob}(G)$ let $\opn{IG}(i) := G_2(i)$, and for 
$g \in G_1(i, j)$ let 
$\opn{IG}(g) := \opn{Ad}_{G_1 \crvar G_{2}}(g)$. 
Then 
$\opn{IG} : G_1 \to \cat{Grp}$
is a functor. 

\item For $i \in \opn{Ob}(G)$ and $a \in \opn{IG}(i)$ let 
$\opn{ig}(a) := \opn{D}(a) \in G_1(i)$. Then 
$\opn{ig} : \opn{IG} \twoto \opn{Aut}_{G_1}$
is a natural transformation of functors $G_1 \to \cat{Grp}$. 

\item The data $( G_1, G_2, \opn{Ad}_{G_1 \crvar G_{2}}, \opn{D} )$
can be recovered from the groupoid $G_1$, the functor 
$\opn{IG} : G_1 \to \cat{Grp}$ and the 
 natural transformation 
$\opn{ig} : \opn{IG} \twoto \opn{Aut}_{G_1}$.
\end{enumerate}
\end{prop}

Proposition \ref{prop:200} suggests an alternative way to view the crossed 
groupoid $G = ( G_1, G_2, \opn{Ad}_{G_1 \crvar G_{2}}, \opn{D} )$.
We call $\opn{IG}(i) = G_2(i)$ the {\em inner gauge group} of the object $i$,
and its elements are also called {\em inner gauge transformations}. 
The group homomorphism 
$\opn{ig} = \opn{D} : \opn{IG}(i) \to G_1(i)$ is called the {\em inner action}. 
The morphisms in $G_1$ are also called {\em gauge transformations}. 
See Remark \ref{rem:cosim.101} regarding this point of view. 
Warning: the group
homomorphism $\opn{ig} = \opn{D}$
is usually not injective. 

Here are some examples of crossed groupoids.

\begin{exa} \label{exa:200}
Consider any groupoid $G_1$, and let $G_2$ be the totally disconnected
groupoid gotten from $G_1$ by removing all morphisms between distinct objects. 
(More generally one can take any normal subgroupoid $N \subset G_1$, in the
sense of \cite[Definition 3.1]{Ye4}, and define $G_2 := N$.)
Define the twisting  
$\opn{Ad}_{G_1 \crvar G_{2}} := {\opn{Ad}_{G_1}}|_{G_{2}}$,
and the feedback $\opn{D} : G_2 \to G_1$ is the inclusion. 
This is easily seen to be a crossed groupoid. 
\end{exa}

For a category $\cat{C}$ we denote by $\cat{C}^{\times}$ the subcategory having
all the objects of $\cat{C}$, and its morphisms are the isomorphisms of
$\cat{C}$. We call  $\cat{C}^{\times}$ the {\em groupoid of invertible
morphisms of $\cat{C}$}. 
Note that if $G$ is a groupoid, then any functor $G \to \cat{C}$ factors
through $\cat{C}^{\times}$.

\begin{exa} \label{exa:15}
Take the category $\cat{Grp}$ of small groups. We will make $\cat{Grp}^{\times}$
into a crossed groupoid  by introducing inner gauge groups. 
For a group $G$ let $\opn{IG}(G) := G$.
For a group isomorphism $\phi : G \to G'$ (i.e.\ for a morphism
$\phi \in \cat{Grp}^{\times}(G, G')$) we let 
$\opn{IG}(\phi) := \phi$. And for an element $g \in G$  we let
$\opn{ig}(g) := \opn{Ad}_G(g)$, i.e.\ conjugation.
Here the kernel of the homomorphism 
\[ \opn{ig} : \opn{IG}(G) = G = \cat{Grp}^{\times}_2(G)
\to \cat{Grp}^{\times}_1(G) = \opn{Aut}_{\cat{Grp}}(G) \]
is the center of the group $G$.
\end{exa}

\begin{exa} \label{exa:6}
Let $R$ be a commutative ring. Take the category $\cat{Assoc}(R)$
of associative unital $R$-algebras. Then $\cat{Assoc}^{\times}(R)$
is a crossed groupoid. 
The group of inner gauge transformations of an associative $R$-algebra $A$ is
the group $\opn{IG}(A) := A^{\times}$
of invertible elements. It is functorial, since a ring isomorphism 
$g : A \to A'$ sends invertible elements to invertible elements.
The inner action $\opn{ig}(a)$, for 
$a \in \opn{IG}(A)$, is conjugation by this invertible element, namely
$\opn{ig}(a)(a') = a \star a' \star a^{-1}$,
where $\star$ is the multiplication in $A$.
The conditions are very easy to verify.
\end{exa}

\begin{rem} \label{rem:cosim.101}
A crossed groupoid is better known as a 
{\em strict $2$-groupoid}, or a {\em crossed module over a groupoid}, or
as a {\em $2$-truncated crossed complex}; see \cite{Bw}. When
$\opn{Ob}(G)$ is a singleton then $G$ is just a crossed module. 
In earlier versions of this paper, and in \cite{Ye10}, we used the name 
{\em category with inner gauge groups}. A trace of this name remains
in the inner gauge groups $\opn{IG}$. 

We use different notation (fonts) for groupoids and crossed groupoids in this
paper, depending usually on their size. Thus $G$ will most likely be a small
groupoid, whereas $\cat{G}$ will be a big groupoid (i.e.\ a
$\cat{U}$-category).  

Traditionally papers used $2$-groupoid language to discuss descent (cf.\
\cite{BGNT1}). In this paper we realized that the crossed groupoid language is
more effective: geometric descent data comes naturally in terms of a
cosimplicial crossed groupoid (see Definition \ref{dfn:21}), and also the
Deligne construction (see Definition \ref{dfn:Lie-desc.101}) appears as a
crossed groupoid, so it is more natural to talk about the
{\em Deligne crossed groupoid}. 
\end{rem}

\begin{dfn}
Suppose 
$H = ( H_1, H_2, \opn{Ad}_{H_1 \crvar H_{2}}, \opn{D} )$ 
is another crossed groupoid. A {\em morphism of crossed groupoids}
$\Phi : G \to H$
is a pair of morphisms of groupoids 
$\Phi_p : G_p \to H_p$, $p = 1, 2$, that are equal on objects, 
and respect the twistings and the feedbacks. We denote by
$\cat{CrGrpd}$ the category consisting of crossed groupoids and morphisms
between them. 
\end{dfn}

\begin{dfn} \label{dfn:227}
A morphism of crossed groupoids $\Phi : G \to H$ is called an
{\em equivalence} if 
$\Phi_1 : G_1 \to H_1$ is an equivalence of groupoids, and for every
$i \in \opn{Ob}(G)$ the group homomorphism 
$\Phi_2 : G_2(i) \to H_2(\Phi_2(i))$ is bijective. 
\end{dfn}

Let $G$ be a crossed groupoid. We define the homotopy set 
$\bsym{\pi}_0(G) := \bsym{\pi}_0(G_1)$,
namely the set of isomorphism classes of objects of $G_1$.
The homotopy groups of $G$ are 
\[ \bsym{\pi}_1(G, i) := 
\opn{Coker} \bigl( \opn{D} : G_2(i) \to G_1(i) \bigr) \]
and
\[ \bsym{\pi}_2(G, i) := 
\opn{Ker} \bigl( \opn{D} : G_2(i) \to G_1(i) \bigr) \]
for $i \in \opn{Ob}(G)$.
The set $\bsym{\pi}_0(G)$ and the groups $\bsym{\pi}_p(G, i)$ are
functorial in $G$. 

\begin{dfn} \label{dfn:equiv-cosim.102}
A morphism of crossed groupoids $\Phi : G \to H$  
is called a {\em weak equivalence} if the function
$\bsym{\pi}_0(\Phi)  : \bsym{\pi}_0(G) \to \bsym{\pi}_0(H)$
is bijective, and the group homomorphisms
$\bsym{\pi}_p(\Phi, i) : \bsym{\pi}_p(G, i) \to \bsym{\pi}_p(H, \Phi(i))$
are bijective for all $i \in \opn{Ob}(G)$ and 
$p \in \{ 1, 2 \}$.
\end{dfn}

Of course if $\Phi$ is an equivalence then it is a weak equivalence.

\begin{dfn} \label{dfn:200}
Let $X$ be a topological space. A {\em stack of crossed groupoids} 
\[ \twocat{P} = ( \twocat{P}_1, \twocat{P}_2, 
\opn{Ad}_{\twocat{P}_1 \crvar \twocat{P}_{2}}, \opn{D} ) \]
on $X$ consists of a crossed groupoid 
\[ \twocat{P}(U) = \bigl( \twocat{P}_1(U), \twocat{P}_2(U), 
\opn{Ad}_{\twocat{P}_1(U) \crvar \twocat{P}_2(U)}, \opn{D} \bigr) \]
for every open set $U \subset X$, with a morphism of crossed
groupoids 
\[ \mrm{rest}^{}_{V / U} : \twocat{P}(U) \to \twocat{P}(V) , \]
called {\em restriction},
for every inclusion $V \subset U$ of open sets. 
The three conditions below have to hold:
\begin{enumerate}
\rmitem{i} The restriction morphisms satisfy 
$\mrm{rest}^{}_{W / V} \circ \mrm{rest}^{}_{V / U} = \mrm{rest}^{}_{W / U}$
for a double inclusion $W \subset V \subset U$ of open sets of $X$.

\rmitem{ii} The prestack of groupoids 
$\twocat{P}_1 : U \mapsto \twocat{P}_1(U)$
on $X$ is a stack. 

\rmitem{iii} For every open set $U$ and object 
$\mcal{A} \in \opn{Ob}(\twocat{P}(U))$, the presheaf of groups 
\[ \twocat{P}_2(\mcal{A}) : V \mapsto 
\twocat{P}_2(V)(\mrm{rest}^{}_{V / U}(\mcal{A}))  \]
on $U$ is a sheaf.  
\end{enumerate}
\end{dfn}

Condition (i) says that the stack $\twocat{P}_1$ is {\em split}, i.e.\ it is a
cat\'egorie fibr\'ee scind\'ee, in the sense of \cite{Gi}. 

\begin{dfn} \label{dfn:203}
Let $\twocat{P}$ be a stack of crossed groupoids on $X$.
\begin{enumerate}
\item An object $\mcal{A} \in \opn{Ob}(\twocat{P}(U))$, for some open set $U$,
is
called a {\em local object} of $\twocat{P}$.

\item Let $\mcal{A} \in \opn{Ob}(\twocat{P}(U))$,  and let $V \subset U$. 
We write $\mcal{A}|_V := \mrm{rest}^{}_{V / U} (\mcal{A})$.

\item Let $\mcal{A} \in \opn{Ob}(\twocat{P}(U))$ and
$\mcal{A}' \in \opn{Ob}(\twocat{P}(U'))$ 
be local objects of $\twocat{P}$, and let $V \subset U \cap U'$.
An element $g \in \twocat{P}_1(V)(\mcal{A}|_V, \mcal{A}'|_V)$
is called a {\em local $1$-morphism} of $\twocat{P}$; and 
an element $a \in \twocat{P}_2(V)(\mcal{A}|_V)$
is called a {\em local $2$-morphism} of $\twocat{P}$.

\item For a local object $\mcal{A}$ of $\twocat{P}$ we denote by
$\opn{IG}(\mcal{A})$ the sheaf of groups $\twocat{P}_2(\mcal{A})$
from condition (iii) in Definition \ref{dfn:200};
and we denote by 
$\opn{ig} : \opn{IG}(\mcal{A}) \to \twocat{P}_1(\mcal{A})$ 
the homomorphism of sheaves of groups induces by the feedbacks $\opn{D}$. 
\end{enumerate}
\end{dfn}

We denote by $\twocat{Grp}(X)$ the {\em stack of sheaves of groups} on $X$. 
It is the stack of categories that assigns
to each open set $U$ the category 
$\twocat{Grp}(X)(U) := \cat{Grp}(U)$
of sheaves of small groups on $U$. The restriction morphisms here are
restrictions of sheaves.

For any stack of groupoids $\twocat{P}$ on $X$ there is a functor of stacks 
$\opn{Aut}_{\twocat{P}} : \twocat{P} \to \twocat{Grp}(X)$,
which is the geometrization of Definition \ref{dfn:204}(2). 
Like Proposition \ref{prop:200} we have:

\begin{prop} \label{prop:201}
Let $\twocat{P}$ be a stack of crossed groupoids on $X$.
\begin{enumerate}
\item $\opn{IG} : \twocat{P}_1 \to \twocat{Grp}(X)$ is a functor of stacks 
on $X$.

\item $\opn{ig} : \opn{IG} \twoto \opn{Aut}_{\twocat{P}_1}$
is a natural transformation between functors of stacks
$\twocat{P}_1 \to \twocat{Grp}(X)$.

\item The data 
$( \twocat{P}_1, \twocat{P}_2, \opn{Ad}_{\twocat{P}_1 \crvar \twocat{P}_{2}},
\opn{D} )$
can be recovered from the stack of groupoids $\twocat{P}_1$, the functor of
stacks 
$\opn{IG} : \twocat{P}_1 \to \twocat{Grp}(X)$ and the natural transformation 
$\opn{ig} : \opn{IG} \twoto \opn{Aut}_{\twocat{P}_1}$.
\end{enumerate}
\end{prop}

\begin{proof}
Again this is immediate from the definitions, but here the details are of
course much more complicated than in Proposition \ref{prop:200}.
It is good to look up \cite[Section 2]{Ye4} for detailed definitions and
notation. 
\end{proof}

\begin{exa}
This is the geometric version of Example \ref{exa:200}.
Suppose $\gerbe{G}$ is a gerbe on $X$, and $\gerbe{N}$ is a normal subgroupoid
on $\gerbe{G}$, in the sense of \cite[Definition 3.6]{Ye4}.
Then there is a stack of crossed groupoids $\twocat{P}$ on $X$, with 
$\twocat{P}_1 := \gerbe{G}$, $\twocat{P}_2 := \gerbe{N}$,
$\opn{Ad}_{\twocat{P}_1 \crvar \twocat{P}_{2}} :=
{\opn{Ad}_{\gerbe{G}}}|_{\gerbe{N}}$,
and $\opn{D} : \twocat{P}_1 \to \twocat{P}_{2}$ is the inclusion. 
\end{exa}

\begin{exa} \label{exa:9}
This is the geometric version of Example \ref{exa:15}.
On any open set $U \subset X$ we have the crossed groupoid 
$\cat{Grp}^{\times}(U)$, in which the objects are the sheaves of groups
$\mcal{G}$ on $U$; the $1$-morphisms are the isomorphisms of sheaves of groups;
and the $2$-morphisms are the elements of $\Gamma(U, \mcal{G})$. 
The twisting and feedback are as in Example \ref{exa:15}.
As $U$ varies we obtain a stack of crossed groupoids
$\twocat{Grp}^{\times}(X)$.
\end{exa}

\begin{exa} \label{exa:10}
This is the geometric version of Example \ref{exa:6}.
Take a commutative ring $R$. For an open set
$U \subset X$ denote by $\cat{Assoc}(R, U)$ the category 
of sheaves of associative $R$-algebras on $U$. As $U$ varies we get 
$\twocat{Assoc}(R, X)$, the {\em stack of sheaves of associative $R$-algebras on
$X$}. Inside $\twocat{Assoc}(R, X)$ we have the stack of groupoids 
$\twocat{Assoc}^{\times}(R, U)$, in which we take only the invertible
morphisms. The latter can be made into a stack of crossed groupoids by taking
$\opn{IG}(\mcal{A}) := \mcal{A}^{\times}$, the sheaf of invertible elements. 
\end{exa}

\section{Deformations of Sheaves of Rings}
\label{sec:defs-alg-vars}

In this section we recall material from the companion papers
\cite{Ye7} and \cite{Ye9}.
For the rest of the paper we work over a base field $\K$ of characteristic
$0$. All algebras are by default $\K$-algebras, and all homomorphisms are by
default over $\K$. Further conventions are that associative algebras are
unital, and commutative algebras are associative (and unital). 
For $\K$-modules $M, N$ we write $M \ot N := M \ot_{\K} N$  and
$\opn{Hom}(M, N) := \opn{Hom}_{\K}(M, N)$.

\begin{dfn} \label{dfn:13}
A {\em parameter $\K$-algebra} is a 
complete local noetherian commutative
$\K$-algebra $R$, with maximal ideal $\m$ and residue field 
$R / \m = \K$.
We sometimes say that $(R, \m)$ is a parameter $\K$-algebra.
For $i \geq 0$ we let
$R_i := R / \m^{i+1}$.
The $\K$-algebra homomorphism $R \to \K$ is called the 
augmentation of $R$.

By homomorphism of parameter algebras $R \to R'$ we mean a $\K$-algebra
homomorphism (it is automatically a local homomorphism).
\end{dfn}

Note that $R$ can be recovered from $\m$, since
$R = \K \oplus \m$ as $\K$-modules, with the obvious multiplication. 

\begin{exa}
The most important parameter algebra in deformation theory is
$\K[[\hbar]]$, the ring of formal power series in the variable
$\hbar$. A $\K[[\hbar]]$-deformation (see below) is sometimes called
a ``$1$-parameter formal deformation''. 
\end{exa}

\begin{rem}
Some of the definitions and results in the paper hold for a field $\K$ of
arbitrary characteristic.
(A few of them hold even for any commutative ring $\K$ and any 
commutative $\K$-algebra $R$.) A rule of thumb is that
whenever exponential functions occur, we need characteristic $0$. For the sake
of clarity we restrict attention to a field of characteristic $0$ and parameter
algebras over it.
\end{rem}

Let $(R, \m)$ be a parameter algebra, and let $M$ be an $R$-module. 
The $\m$-adic completion of $M$ is the $R$-module
$\what{M} :=  \lim_{\leftarrow i}\, (R_i \otimes_R M)$. 
The module $M$ is called {\em $\m$-adically complete} if the
canonical homomorphism
$M \to \what{M}$ is bijective. 
Since $R$ is noetherian, the $\m$-adic completion of any $R$-module is 
$\m$-adically complete; see \cite[Corollary 3.5]{Ye5}. (This may be false when
$R$ is not noetherian.) 

Given a $\K$-module $V$ and an $R$-module $M$, we let
$M \hot V := \what{M \ot V}$, the $\m$-adic completion of the $R$-module 
$M \ot V$.  

Let $X$ be a topological space. Recall that a sheaf $\mcal{M}$ 
of $R$-modules on $X$ is called {\em flat} if for every point 
$x \in X$ the stalk $\mcal{M}_x$ is a flat $R$-module. 
The {\em $\m$-adic completion} of $\mcal{M}$ is the sheaf 
$\what{\mcal{M}} := \lim_{\leftarrow i}\, (R_i \otimes_R \mcal{M})$.
The sheaf $\mcal{M}$ is called {\em $\m$-adically complete} if the
canonical sheaf homomorphism 
$\mcal{M} \to \what{\mcal{M}}$ is an isomorphism.

We need some properties of sheaves on $X$.
Suppose $\bsym{U} = \{ U_k \}_{k \in K}$ is a collection of
open sets in $X$. For $k_0, \ldots, k_m \in K$ we write
$U_{k_0, \ldots, k_m} := U_{k_0} \cap \cdots \cap U_{k_m}$.

\begin{dfn} \label{dfn:23}
Let $\mcal{N}$ be a sheaf of abelian groups on a topological space
$X$. 
\begin{enumerate}
\item An open set $U \subset X$ will be called {\em
$\mcal{N}$-acyclic} if the derived functor sheaf cohomology satisfies
$\mrm{H}^i (U, \mcal{N}) = 0$ for all $i > 0$.
\item Now suppose $\bsym{U} = \{ U_k \}_{k \in K}$ is a collection of
open sets in $X$. We say that the collection $\bsym{U}$ is
{\em $\mcal{N}$-acyclic} if all the finite intersections
$U_{k_0, \ldots, k_m}$ are $\mcal{N}$-acyclic.
\item We say that {\em there are enough $\mcal{N}$-acyclic open
sets} if for any open set $U \subset X$, and any open covering
$\bsym{U}$ of $U$, there exists an $\mcal{N}$-acyclic open covering
$\bsym{U}'$ of $U$ which refines $\bsym{U}$.
\end{enumerate}
\end{dfn}

\begin{exa} \label{exa:20}
Here are a few typical examples of a topological space $X$, and a
sheaf $\mcal{N}$, such that there are enough $\mcal{N}$-acyclic 
open sets.
\begin{enumerate}
\item $X$ is an algebraic variety over the field $\K$
(i.e.\ an integral finite type separated  $\K$-scheme), with structure
sheaf $\mcal{O}_X$, and $\mcal{N}$ is a coherent $\mcal{O}_X$-module.
Then any affine open set $U$ is $\mcal{N}$-acyclic.
\item $X$ is a complex analytic manifold, with structure sheaf 
$\mcal{O}_X$, and $\mcal{N}$ is a coherent $\mcal{O}_X$-module. 
Here we take $\K = \mbb{C}$ of course. Then
any Stein open set $U$ is $\mcal{N}$-acyclic.
\item $X$ is a differentiable manifold, with structure
sheaf $\mcal{O}_X$, and $\mcal{N}$ is any $\mcal{O}_X$-module. 
The field is $\K = \mbb{R}$. Then
any open set $U$ is $\mcal{N}$-acyclic.
\item $X$ is a differentiable manifold, and $\mcal{N}$ is a locally
constant sheaf of abelian groups. Then any sufficiently small
simply connected open set $U$ is $\mcal{N}$-acyclic. 
\end{enumerate}
\end{exa}

In order to get a good behavior of $\m$-adically complete sheaves, we shall
consider deformations in the following setup:

\begin{setup} \label{setup:200}
$\K$ is a field of characteristic $0$;
$(R, \m)$ is a parameter $\K$-algebra; $X$ is a topological
space; and $\mcal{O}_X$ is a sheaf of commutative $\K$-algebras on $X$. 
Moreover $X$ has enough $\mcal{O}_X$-acyclic open sets.
\end{setup}

As Example \ref{exa:20} shows this covers many instances.
We assume Setup \ref{setup:200} for the rest of this section.

\begin{dfn} \label{dfn:230}
An {\em associative $R$-deformation of $\mcal{O}_X$} is a sheaf
$\mcal{A}$ of flat $\m$-adically complete associative
$R$-algebras on $X$, together with an isomorphism of
sheaves of $\K$-algebras
$\psi : \K \otimes_R \mcal{A} \to \mcal{O}_X$, called an {\em augmentation}.

Let $\mcal{A}$ and $\mcal{A}'$ be associative $R$-deformations of
$\mcal{O}_X$. A {\em gauge transformation}
$g : \mcal{A} \to \mcal{A}'$
is an isomorphism of sheaves of $R$-algebras that commutes with
the augmentations to $\mcal{O}_X$.
\end{dfn}

Similarly, given a commutative $\K$-algebra $C$, an {\em associative
$R$-deformation of $C$} is a flat $\m$-adically complete associative
$R$-algebra $A$, together with an isomorphism of $\K$-algebras
$\K \otimes_R A \cong C$. 

Observe that we do not impose a condition that an associative deformation $\AA$
should be ``locally differential''; see Remarks \ref{rem:defs-sh.110} and
\ref{rem:221} for a discussion. 

Recall that a Poisson bracket on a commutative $R$-algebra $A$ is 
an $R$-bilinear pairing 
$\{ -,- \} : A \times A \to A$ 
which is a Lie bracket, and is a biderivation (a derivation in each variable). 
The pair $(A, \{ -,- \} )$ is called a {\em Poisson $R$-algebra}. 
A homomorphism of Poisson $R$-algebras $A \to A'$ is an algebra homomorphism
that respects the Poisson brackets. All of this makes sense also for sheaves of
$R$-algebras. 

\begin{dfn} \label{dfn:231}
We view $\mcal{O}_X$ as a sheaf of Poisson $\K$-algebras
with the zero bracket.
A {\em Poisson $R$-deformation of $\mcal{O}_X$} is a sheaf
$\mcal{A}$ of flat $\m$-adically complete Poisson
$R$-algebras on $X$, together with an isomorphism of
sheaves of Poisson $\K$-algebras
$\psi : \K \otimes_R \mcal{A} \to \mcal{O}_X$, called an {\em augmentation}.

 Let $\mcal{A}$ and $\mcal{A}'$ be Poisson $R$-deformations of
$\mcal{O}_X$. A {\em gauge transformation}
$g : \mcal{A} \to \mcal{A}'$
is an isomorphism of sheaves of Poisson $R$-algebras that commutes with
the augmentations to $\mcal{O}_X$.
\end{dfn}

Similarly, given a commutative $\K$-algebra $C$, a {\em Poisson $R$-deformation
of $C$} is a flat $\m$-adically complete Poisson $R$-algebra $A$, together with
an isomorphism of Poisson $\K$-algebras $\K \otimes_R \mcal{A} \cong A$.

According to \cite[Proposition 4.5]{Ye9}, if $\mcal{A}$ is an associative
(resp.\ Poisson) $R$-deformation of $\OX$, and $U \subset X$ is an $\OX$-acyclic
open set, then $A := \Gamma(U, \mcal{A})$ is an associative
(resp.\ Poisson) $R$-deformation of $C :=  \Gamma(U, \OX)$.
The important fact is that $A$ is a flat and $\m$-adically complete
$R$-module.

Let $\mcal{A}$ be an associative (resp.\ Poisson) $R$-deformation of $\OX$.
It has an $R$-bilinear Lie bracket $[-,-]$ on it. In the associative case let
$\star$ denote the multiplication. Given local sections $a_1,  a_2 \in
\mcal{A}$, their bracket is 
$[a_1,  a_2] := a_1 \star a_2 - a_2 \star a_1$. 
In the Poisson case we take the Poisson bracket:
$[a_1,  a_2] := \{ a_1,  a_2 \}$. 
Note that $[\mcal{A}, \mcal{A}] \subset \m \AA$.
According to \cite[Corollary 3.8]{Ye9}, for every $j \geq 0$ the sheaf 
$\m^j \mcal{A}$ is $\m$-adically complete. Thus we obtain a sheaf of
pronilpotent groups
\begin{equation} \label{eqn:350}
\opn{Exp}(\m^j \mcal{A}) := 
\lim_{\leftarrow i}\, \opn{Exp}(\m^j \mcal{A} / \m^{j+i} \mcal{A}) ,
\end{equation}
and an isomorphism of sheaves of sets
\begin{equation} \label{eqn:351}
\exp : \m^j \AA \iso \opn{Exp}(\m^j \mcal{A}) .
\end{equation}
In particular we have a sheaf of pronilpotent groups
\begin{equation} \label{eqn:230}
\opn{IG}(\mcal{A}) := \opn{Exp}(\m \mcal{A})  
\end{equation}
that we call the {\em sheaf of inner gauge groups} of $\AA$.
A gauge transformation $g : \mcal{A} \to \mcal{A}'$ induces a group isomorphism 
\begin{equation} \label{eqn:234}
\opn{IG}(g) : \opn{IG}(\mcal{A}) \to \opn{IG}(\mcal{A}') .
\end{equation}

Consider a local section $\al \in \m \mcal{A}$.
Let $\opn{ad}_{\mcal{A}}(\al) : \mcal{A} \to \mcal{A}$ be the operator
$\opn{ad}_{\mcal{A}}(\al)(a') := [\al, a']$,
where $[-,-]$ is the Lie bracket. Then to the group element 
$a := \exp(\al) \in \opn{IG}(\mcal{A})$ we can associate the operator 
\begin{equation} \label{eqn:231}
\opn{ig}(a) := \exp( \opn{ad}_{\mcal{A}}(\al) ) 
\end{equation}
on $\mcal{A}$. It turns out that $\opn{ad}_{\mcal{A}}(\al)$ is a derivation of
the algebra $\mcal{A}$, and hence $\opn{ig}(a)$ is a gauge transformation of
$\mcal{A}$. In the associative case the group $\opn{IG}(\mcal{A})$ is
canonically identified with the kernel of the augmentation homomorphism 
$\psi : \mcal{A}^{\times} \to \mcal{O}_X^{\times}$,
and then $\opn{ig}(a)$ becomes conjugation by the invertible element $a$.
In the Poisson case we call $\opn{ig}(a)$ a 
{\em formal hamiltonian flow}.
See \cite[Section 5]{Ye9} for details. 

\begin{dfn} \label{dfn:233}
We denote by $\cat{AssDef}(R, \mcal{O}_X)$ (resp.\ 
$\cat{PoisDef}(R, \mcal{O}_X)$) the crossed groupoid 
$\cat{P} = (\cat{P}_1, \cat{P}_2, 
\opn{Ad}_{\cat{P}_1 \crvar \cat{P}_{2}}, \opn{D})$
described as follows:
\begin{itemize}
\item The objects of $\cat{P}$ are the associative (resp.\ Poisson)
$R$-deformations $\mcal{A}$ of $\mcal{O}_X$.

\item The $1$-morphisms, i.e.\ the elements of the sets
$\cat{P}_1(\mcal{A}, \mcal{A}')$, are the gauge transformations 
$g : \mcal{A} \to \mcal{A}'$.

\item The groups of $2$-morphisms are
$\cat{P}_2(\mcal{A}) := \Gamma(X, \opn{IG}(\mcal{A}))$.

\item The twisting by a $1$-morphism $g$ is 
$\opn{Ad}_{\cat{P}_1 \crvar \cat{P}_{2}}(g) := \opn{IG}(g)$.

\item The feedback 
$\opn{D} : \cat{P}_2(\mcal{A}) \to \cat{P}_1(\mcal{A})$ is
$\opn{D}(a) := \opn{ig}(a)$.
\end{itemize}
\end{dfn}

According to \cite[Proposition 5.7]{Ye9} these are indeed crossed groupoids.

In the next two propositions we write $\cat{P}(R, X)$ for either 
$\cat{AssDef}(R, \mcal{O}_X)$ or $\cat{PoisDef}(R, \mcal{O}_X)$; etc.

\begin{prop} \label{prop:206}
Let $U' \subset U$ be open subsets of $X$, and $R \to R'$ a homomorphism of
parameter algebras.  There is a morphism of crossed groupoids 
$\cat{P}(R, U) \to \cat{P}(R', U')$,
sending $\mcal{A}$ to $(R' \hatotimes{R} \mcal{A})|_{U'}$. 
\end{prop}

\begin{proof}
For $R \to R'$ this is \cite[Proposition 5.8]{Ye9}. That 
$\mcal{A}|_{U'}$ is a deformation of $\mcal{O}_{U'}$ is trivial. 
\end{proof}

\begin{prop} \label{prop:210}
The assignment 
$U \mapsto \cat{P}(R, U)$
is a stack of crossed groupoids on $X$. 
\end{prop}

\begin{proof}
By Proposition \ref{prop:206} we have a morphism of crossed groupoids
\[ \opn{rest}_{V / U} : \cat{P}(R, \mcal{O}_U) \to \cat{P}(R, \mcal{O}_V) \]
for every inclusion $V \subset U$ of open sets. So the local objects and local
$1$-morphisms form a split prestack of groupoids, say $\twocat{P}_1$. Since the
property of being an $R$-deformation of $\OX$ is local, and gauge
transformations between deformations are local, it follows that $\twocat{P}_1$
is a stack.
Finally, we know that the local $2$-morphisms form sheaves of groups
(these are the sheaves $\opn{IG}(\mcal{A})$). 
\end{proof}

\begin{dfn} \label{dfn:210}
\begin{enumerate}
\item The stack of crossed groupoids
$U \mapsto \cat{AssDef}(R, \mcal{O}_U)$ 
is called the {\em stack of associative $R$-deformations of $\OX$},
and it is denoted by $\twocat{AssDef}(R, \mcal{O}_X)$.

\item The stack of crossed groupoids
$U \mapsto \cat{PoisDef}(R, \mcal{O}_U)$ 
is called the {\em stack of Poisson $R$-deformations of $\OX$},
and it is denoted by $\twocat{PoisDef}(R, \mcal{O}_X)$.
\end{enumerate}
\end{dfn}

We now discuss first order brackets.
Suppose $\mcal{A}$ is an $R$-deformation of $\mcal{O}_X$. Since
$\mcal{A}$ is flat over $R$, and we have the augmentation 
$\psi : \K \otimes_R \mcal{A} \iso \mcal{O}_X$,
there is an induced $\K$-linear isomorphism 
$\m \mcal{A} / \m^2 \mcal{A} \cong
(\m / \m^2) \ot  \mcal{O}_X$.
This gives rise to a homomorphism of sheaves
$\psi^+ : \m \mcal{A} \to (\m / \m^2) \ot  \mcal{O}_X$.

Suppose $\mcal{A}$ is an associative deformation, with multiplication
$\star$. Given local sections $a_1,  a_2 \in \mcal{A}$, the
commutator satisfies
$a_1 \star a_2 - a_2 \star a_1 \in \m \mcal{A}$.
Hence we get
\begin{equation} \label{eqn:400}
\psi^+(a_1 \star a_2 - a_2 \star a_1) \in 
(\m / \m^2) \ot  \mcal{O}_X . 
\end{equation}
Likewise if $\mcal{A}$ is a Poisson deformation, with Poisson bracket
$\{ -,- \}$, then we have
$\psi^+(\{ a_1, a_2 \}) \in (\m / \m^2) \ot  \mcal{O}_X$.

\begin{prop} 
Let $\mcal{A}$ be associative \tup{(}resp.\ Poisson\tup{)}
$R$-deformation of $\mcal{O}_X$. 
There is a unique $\K$-bilinear sheaf morphism
\[ \{ -,- \}_{\mcal{A}} : \mcal{O}_X \times \mcal{O}_X \to 
(\m / \m^2) \ot \mcal{O}_X  \]
having the following property.
Given local sections $c_1, c_2 \in \mcal{O}_X$, choose local liftings 
$a_1,  a_2 \in \mcal{A}$ relative to the augmentation
$\psi : \mcal{A} \to \mcal{O}_X$, 
i.e.\ $c_i = \psi(a_i)$. Then 
\[ \{ c_1, c_2 \}_{\mcal{A}} = \psi^+ 
\bigl( \smfrac{1}{2} (a_1 \star a_2 - a_2 \star a_1) \bigr) \]
or
\[ \{ c_1, c_2 \}_{\mcal{A}} = \psi^+(\{ a_1, a_2 \}) , \]
as the case may be.
\end{prop}

\begin{proof}
This is a variant of the usual calculation in deformation theory. The
only thing to notice is that it makes sense for sheaves.
\end{proof}

\begin{dfn} \label{dfn:20}
The {\em first order bracket} of $\mcal{A}$ is the 
$\K$-bilinear sheaf morphism
$\{ -,- \}_{\mcal{A}}$ in the proposition above. 
\end{dfn}

\begin{prop} \label{prop:20}
\begin{enumerate}
\item The first order bracket is gauge invariant. Namely if 
$\mcal{A}$ and $\mcal{B}$ are gauge equivalent
$R$-deformations of $\mcal{O}_X$, then 
$\{ -,- \}_{\mcal{A}} = \{ -,- \}_{\mcal{B}}$.
\item The bracket $\{ -,- \}_{\mcal{A}}$ is a biderivation of
$\mcal{O}_X$-modules.
\item Suppose $R = \K[[\hbar]]$. Using the isomorphism
$\hbar^{-1} : \m / \m^2 \iso \K$, we get a bilinear function
\[ \{ -,- \}_{\mcal{A}} : \mcal{O}_X \times \mcal{O}_X \to \mcal{O}_X
. \]
Then this is a Poisson bracket on $\mcal{O}_X$.
\end{enumerate}
\end{prop}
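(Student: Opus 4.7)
All three parts concern identities of sheaf morphisms, so I may verify them on small open sets where local liftings of sections of $\mcal{O}_X$ to $\mcal{A}$ exist. For part (1), let $g : \mcal{A} \iso \mcal{B}$ be a gauge transformation. Because $g$ is $R$-linear and commutes with the augmentations, the induced isomorphism $\m \mcal{A}/\m^2 \mcal{A} \iso \m \mcal{B}/\m^2 \mcal{B}$ corresponds, under the canonical identifications from Proposition \ref{prop:13}(1), to the identity on $(\m/\m^2) \ox_{\K} \mcal{O}_X$. If $a_1, a_2 \in \mcal{A}$ locally lift $c_1, c_2$, then $g(a_1), g(a_2) \in \mcal{B}$ are again liftings; since $g$ respects the multiplication (resp.\ Poisson bracket), applying $\psi^1$ to both sides yields $\{ c_1, c_2 \}_{\mcal{A}} = \{ c_1, c_2 \}_{\mcal{B}}$.

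For part (2), it suffices to verify the Leibniz rule in (say) the second argument. Choose local liftings $a_1, a_2, a_3 \in \mcal{A}$ of $c_1, c_2, c_3$. In the associative case, the identity $[a_1, a_2 \star a_3] = [a_1, a_2] \star a_3 + a_2 \star [a_1, a_3]$ holds in $\mcal{A}$. The map $\psi^1$ factors through $\m \mcal{A}/\m^2 \mcal{A}$, on which the $\mcal{A}$-action reduces to the $\mcal{O}_X$-action on the tensor factor $\mcal{O}_X$ of $(\m/\m^2) \ox_{\K} \mcal{O}_X$. Applying $\smfrac{1}{2} \psi^1$ thus converts the two products on the right to $\mcal{O}_X$-multiplication by $c_3, c_2$, yielding Leibniz for $\{ -, - \}_{\mcal{A}}$. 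The Poisson case is analogous, using the biderivation property of $\{ -, - \}$ on $\mcal{A}$. Antisymmetry in each argument is immediate from the definitions.

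For part (3), antisymmetry has just been noted and Leibniz is part (2); only the Jacobi identity requires argument. In the Poisson case, Jacobi for $\{ -, - \}$ on $\mcal{A}$ holds exactly; its cyclic sum of iterated brackets lies in $\m^2 \mcal{A} = \hbar^2 \mcal{A}$, and division by $\hbar^2$ (legitimate by flatness of $\mcal{A}$ over $R$) followed by reduction modulo $\hbar$ yields Jacobi for $\{ -, - \}_{\mcal{A}}$. The associative case is the standard quasi-classical-limit computation: a nested commutator $[[a_1, a_2], a_3]$ lies in $\hbar^2 \mcal{A}$, and the Jacobi identity for the commutator in $\mcal{A}$, divided by $\hbar^2$ and reduced modulo $\hbar$, gives Jacobi for $\{ -, - \}_{\mcal{A}}$, with a uniform factor $\smfrac{1}{4}$ in each cyclic summand that may be cleared. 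The main point requiring care is this factor bookkeeping together with the use of a local lifting $b_{12} \in \mcal{A}$ of $\hbar^{-1}[a_1, a_2]$ — produced by flatness — when evaluating the iterated bracket; once this is set up, the identity reduces directly to the Jacobi identity of the commutator.
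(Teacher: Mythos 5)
Your proposal is correct and is precisely the "easy local calculation" that the paper's proof gestures at without writing out: gauge invariance via compatibility of $g$ with augmentations, Leibniz from the commutator (resp.\ biderivation) identity modulo $\m^2\mcal{A}$, and Jacobi by dividing the exact Jacobi identity in $\mcal{A}$ by $\hbar^2$ using flatness. The care you take with the lifting $\hbar^{-1}[a_1,a_2]$ and the factor of $\smfrac{1}{4}$ is exactly the right bookkeeping.
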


\begin{proof}
All these statements are easy local calculations. 
\end{proof}

\section{DG Lie Algebras and Deformations of Affine Varieties}
\label{sec:defs-DG-Lie}

We now recall some facts on DG Lie algebras from \cite{Ye7, Ye9}. 

\begin{dfn} \label{dfn:220}
\begin{enumerate}
\item A DG Lie algebra 
$\g = \bigoplus_{i \in \mbb{Z}} \mfrak{g}^{i}$
is said to be of {\em quantum type} if $\g^i = 0$ for $i < -1$.

\item A {\em quasi quantum type DG Lie algebra} is a DG Lie algebra 
$\til{\mfrak{g}}$ that admits a DG Lie quasi-isomorphism
$\til{\g} \to \g$ to some quantum type DG Lie algebra $\g$.
\end{enumerate}
\end{dfn}

Fix a quasi quantum type DG Lie $\K$-algebra $\g$, and a parameter $\K$-algebra 
$(R, \m)$. There is an induced Lie $R$-algebra 
$\m \hot \g = \bigoplus_i \m \hot \g^i$.
The Lie bracket is $[-,-]$ and the differential is $\d$.
A solution $\om \in \m \hot \g^1$ of the Maurer-Cartan equation 
$\d(\om) + \smfrac{1}{2} [\om, \om] = 0$
is called an {\em MC element}, and the set of such elements is denoted by
$\opn{MC}(\m \hot \g)$.
The group $\opn{Exp}(\m \hot \g^0)$ is called the {\em gauge group}.
For $\ga \in \m \hot \g^0$ there is an affine action $\opn{af}(\ga)$ on 
the $R$-module $\m \hot \g^1$, 
namely $\opn{af}(\ga)(\om) := [\ga ,\om] - \d(\ga)$.
So for 
$g := \exp(\ga) \in \opn{Exp}(\m \hot \g^0)$
there is an affine automorphism 
$\opn{Af}(g) := \exp(\opn{af}(\ga))$ of $\m \hot \g^1$.
It is known that $\opn{Af}(g)$ preserves the set $\opn{MC}(\m \hot \g)$.
See \cite{GM}, \cite{CKTB}, \cite[Section 3]{Ye1} or \cite[Section 1]{Ye7} for
details.

Given $\om \in \opn{MC}(\m \hot \g)$ there is a square zero derivation
$\d_{\om} :=  \d + \opn{ad}(\om)$ of degree $1$ on $\m \hot \g$, and a bracket 
$[\al_1, \al_2]_{\om} := [ \d_{\om}(\al_1), \al_2 ]$
for $\al_1, \al_2 \in \m \hot \g^{-1}$ (that's not a Lie bracket in general).
Define the $R$-module
\begin{equation} \label{eqn:Lie-desc.110}
\a_{\om} := \opn{Coker}(\d_{\om} : \m \hot \g^{-2} \to 
\m \hot \g^{-1}) .
\end{equation}
By \cite[Proposition 6.6]{Ye7}, $\a_{\om}$ is an $R$-linear Lie algebra, with
bracket induced by  $[-,-]_{\om}$. And by \cite[Proposition 6.7]{Ye7},
$\a_{\om}$ is an $\m$-adically complete $R$-module (this is where we need $\g$
to be quasi quantum type!). So $\a_{\om}$ is a pronilpotent Lie algebra, and
there is an associated pronilpotent group $N_{\om} := \opn{Exp}(\a_{\om})$. 

The $R$-linear function 
$\d_{\om} : \a_{\om} \to \m \hot \g^0$
is a Lie algebra homomorphism, so it induces a group homomorphism 
\begin{equation} \label{eqn:Lie-desc.103}
\opn{D}_{\om} : N_{\om} \to \opn{Exp}(\m \hot \g^0) \ , \
\opn{D}_{\om} := \exp(\d_{\om}) .
\end{equation}
A short calculation shows that 
$\opn{af}(\d_{\om}(\al))(\om) = 0$
for any $\al \in \m \hot \g^{-1}$. The exponential of this formula is 
$\opn{Af}(\opn{D}_{\om}(a))(\om) = \om$ for any $a \in N_{\om}$.
We see that 
$\opn{D}_{\om}(a) \in \opn{Exp}(\m \hot \g^0)(\om)$,
where the latter is the stabilizer of $\om$ in the group $\opn{Exp}(\m \hot
\g^0)$.  

Now take $g \in \opn{Exp}(\m \hot \g^0)$, and let 
$\om' := \opn{Af}(g)(\om) \in \opn{MC}(\m \hot \g)$.
According to \cite[Corollary 6.9]{Ye7} there is a group isomorphism 
\begin{equation} \label{eqn:Lie-desc.104}
\opn{Ad}(g) : N_{\om} \iso N_{\om'} ,
\end{equation}
which is functorial in $g$, and the diagram 
\[ \UseTips \xymatrix @C=5ex @R=5ex {
N_{\om}
\ar[d]_{\opn{Ad}(g)}
\ar[r]^(0.34){\opn{D}_{\om}}
&
\opn{Exp}(\m \hot \g^0)
\ar[d]^{\opn{Ad}(g)}
\\
N_{\om'}
\ar[r]^(0.34){\opn{D}_{\om'}}
&
\opn{Exp}(\m \hot \g^0)
} \]
is commutative. By definition of the bracket $[-,-]_{\om}$, the adjoint
action in the Lie algebra $\a_{\om}$ is
$\opn{ad}_{\a_{\om}}(\al_1)(\al_2) = \opn{ad}(\d_{\om}(\al_1))(\al_2)$; 
hence, by exponentiating this equation, we see that conjugation in the group 
$N_{\om}$ is  
$\opn{Ad}_{N_{\om}}(a_1)(a_2) = \opn{Ad}(\opn{D}_{\om}(a_1))(a_2)$
for $a_i \in N_{\om}$.

Crossed groupoids were introduced in Definition \ref{dfn:cosim.101}.
The considerations above justify the next definition. 

\begin{dfn} \label{dfn:Lie-desc.101}
Let $\g$ be a quasi quantum type DG Lie algebra, and let $(R, \m)$ be a
parameter algebra, both over the field $\K$. The {\em Deligne crossed groupoid} 
is the crossed groupoid $\opn{Del}(\g, R)$ with these
components:
\begin{itemize}
\item The groupoid $\opn{Del}_1(\g, R)$ is the transformation groupoid 
associated to the action $\opn{Af}$ of the group $\opn{Exp}(\m \hot \g^0)$
on the set $\opn{MC}(\m \hot \g)$.
(This is the usual Deligne groupoid of $\m \hot \g$.)

\item  The groupoid $\opn{Del}_2(\g, R)$ is the totally disconnected
groupoid with set of objects $\opn{MC}(\m \hot \g)$.
For $\om \in \opn{MC}(\m \hot \g)$ its automorphism group 
in  $\opn{Del}_2(\g, R)$ is 
$N_{\om} = \opn{Exp}(\a_{\om})$, where $\a_{\om}$ is the pronilpotent Lie
algebra in
formula (\ref{eqn:Lie-desc.110}).

\item The twisting 
$\opn{Ad}_{\opn{Del}_1 \crvar \opn{Del}_2}(g)$,
for $g \in  \opn{Exp}(\m \hot \g^0)$, is the group isomorphism
$\opn{Ad}(g)$ in formula (\ref{eqn:Lie-desc.104}).

\item The feedback $\opn{D}$ is the group homomorphism in formula 
(\ref{eqn:Lie-desc.103}).
\end{itemize}
\end{dfn}

It is obvious from the construction that the crossed groupoid
 $\opn{Del}(\g, R)$ is functorial in both $\g$ and $R$. 
When $\g$ is a quantum type DG Lie algebra then 
$\a_{\om} = (\m \hot \g^{-1})_{\om}$ of course.
If $\g$ is of quantum type and $R$ is artinian, then $\opn{Del}(\g, R)$
is precisely the $2$-groupoid introduced by Deligne in \cite{De}; see also 
\cite{Ge1}.

Let $X$ be a smooth algebraic variety over $\K$. 
There are two important sheaves of DG Lie algebras on $X$: the sheaf
$\mcal{T}_{\mrm{poly}, X}$ of {\em polyderivations}, and the
sheaf $\mcal{D}_{\mrm{poly}, X}$ of {\em polydifferential operators}. 
Within $\mcal{D}_{\mrm{poly}, X}$ there is the subalgebra
$\mcal{D}_{\mrm{poly}, X}^{\mrm{nor}}$ 
of {\em normalized polydifferential operators}. 
The sheaves $\mcal{T}_{\mrm{poly}, X}$, $\mcal{D}_{\mrm{poly}, X}$ and
$\mcal{D}_{\mrm{poly}, X}^{\mrm{nor}}$
are quasi-coherent graded $\OX$-modules, and the inclusion 
$\mcal{D}_{\mrm{poly}, X}^{\mrm{nor}} \to \mcal{D}_{\mrm{poly}, X}$
is a quasi-isomorphism. 
Full details on these DG Lie algebras can be found in \cite{Ye1, Ye2}. 
For any open set $U \subset X$ we get quantum type DG Lie algebras
$\Gamma(U, \mcal{D}_{\mrm{poly}, X}^{\mrm{nor}})$
and 
$\Gamma(U, \mcal{T}_{\mrm{poly}, X})$,
so Deligne crossed groupoids exist. 

Here is how the DG Lie algebras $\mcal{T}_{\mrm{poly}, X}$ and 
$\mcal{D}_{\mrm{poly}, X}^{\mrm{nor}}$ control deformations of 
$\OX$. 

\begin{thm}[{Geometrization, \cite[Theorem 0.1]{Ye9}}] \label{thm:205}
Let $\K$ be a field of characteristic $0$, $X$ a smooth algebraic variety
over $\K$, $R$ a parameter algebra over $\K$, 
and $U$ an affine open set in $X$.
There are equivalences of crossed groupoids
\[  \opn{geo} : \
\opn{Del} \bigl( \Gamma(U, \mcal{D}_{\mrm{poly}, X}^{\mrm{nor}}) , R \big)
\to \cat{AssDef}(R, \mcal{O}_U) \]
and
\[  \opn{geo} : \
\opn{Del} \bigl( \Gamma(U, \mcal{T}_{\mrm{poly}, X}) , R \big)
\to \cat{PoisDef}(R, \mcal{O}_U) \]
which we call {\em geometrization}. The equivalences $\opn{geo}$ 
commute with homomorphisms $R \to R'$ of parameter algebras, and with inclusions
of affine open sets $U' \to U$.
\end{thm}

The dependence of $\cat{AssDef}(R, \mcal{O}_U)$ and 
$\cat{PoisDef}(R, \mcal{O}_U)$ on $R$ and $U$ is shown in Proposition
\ref{prop:206}.  

\begin{rem} \label{rem:defs-sh.110}
Suppose $R = \K[[\hbar]]$. In our earlier
paper \cite{Ye1} we referred to an associative $R$-deformation of 
$\mcal{O}_X$ as a ``deformation quantization of $\mcal{O}_X$''. In
retrospect this name seems inappropriate,
and hence the new name used here.

Another, more substantial, change is that in 
\cite[Definition 1.6]{Ye1}
we required that the associative deformation $\mcal{A}$
shall be endowed with a differential structure. This turns out to be
redundant -- indeed, Theorem \ref{thm:205} implies that any
associative $R$-deformation $\mcal{A}$ of $\mcal{O}_X$ admits a differential
structure. Moreover, any two such differential structures are equivalent.
\end{rem}

\begin{rem} \label{rem:221}
In \cite{KS2} Kashiwara and Schapira consider associative
$\mbb{C}[[\hbar]]$-defor\-mations $\mcal{A}$ of $\OX$, for a complex manifold
$X$. Their definition is very similar to our Definition \ref{dfn:230} (for 
$R = \mbb{C}[[\hbar]]$), except that they do not require an augmentation 
$\mcal{A} \to \OX$.
\cite[Proposition 2.2.3]{KS2} is very similar to the fullness of the functor 
$\opn{geo}$ on $1$-morphisms (in the associative case of Theorem \ref{thm:205}
above). However it is not known whether $\opn{geo}$ is essentially surjective
on objects in the complex analytic setting -- this is  \cite[Remark 2.2.7]{KS2}.
\end{rem}

\section{Twisted Objects in a Stack of Crossed Groupoids}
\label{sec:tw.sh}

In this section we present some geometric constructions that generalize the
concepts of gerbe and stack of algebroids.

As a warmup we begin with the non-geometric version. 
Recall that a crossed groupoid 
$\cat{P} =
(\cat{P}_1, \cat{P}_2, \opn{Ad}_{\cat{P}_1 \crvar \cat{P}_{2}}, \opn{D})$
determines a functor 
$\opn{IG} : \cat{P}_1 \to \cat{Grp}$,
and a natural transformation 
$\opn{ig} : \opn{IG} \twoto \opn{Aut}_{\cat{P}_1}$
of functors $\cat{P}_1 \to \cat{Grp}$. 
Namely 
$\opn{IG}(A) = \cat{P}_2(A)$ for any object $A$ of $\cat{P}$,
and $\opn{ig} = \opn{D}$. See Definition \ref{dfn:cosim.101} and Proposition
\ref{prop:200}.

\begin{dfn} \label{dfn:6}
Let $\cat{P}$ be a crossed groupoid. A {\em twisted object of $\cat{P}$} is a
triple $(G, \bsym{A}, \opn{cp})$ consisting of:
\begin{enumerate}
\item A small connected nonempty groupoid $G$, called the {\em gauge groupoid}.
\item A functor
$\bsym{A} : G \to \cat{P}_1$, called the {\em representation}.
\item A natural isomorphism
$\opn{cp} : \opn{Aut}_{G} \twoiso \opn{IG} \circ \, \bsym{A}$
of functors $G \to \cat{Grp}$, called the  {\em
coupling isomorphism}.
\end{enumerate} 
The condition is: 
\begin{enumerate}
\rmitem{$*$} The diagram 
\[ \UseTips \xymatrix @C=7ex @R=7ex {
\opn{Aut}_{G}
\ar@{=>}[r]^{\opn{cp}}
\ar@{=>}[dr]_{\opn{Aut}_{\bsym{A}}}
& \opn{IG} \circ \, \bsym{A}
\ar@{=>}[d]^{\opn{ig} \circ \, \bsym{1}_{\bsym{A}}} \\
& \opn{Aut}_{\cat{P}_1} \circ \, \bsym{A}
} \]
of natural transformations
between functors $G \to \cat{Grp}$ is commutative. 
Here $\bsym{1}_{\bsym{A}}$ is the identity automorphism of the functor 
$\bsym{A}$.
\end{enumerate}
\end{dfn}

The set of twisted objects of $\cat{P}$ is
denoted by $\opn{TwOb}(\cat{P})$. 
We often refer to the twisted object 
$(G, \bsym{A}, \opn{cp})$ just as $\bsym{A}$.

The definition above is quite formal and maybe difficult to
understand. So here is what it really means. 
For any $i \in \opn{Ob} (G)$ there is an object
$A_i := \bsym{A}(i) \in \opn{Ob} (\cat{P})$. 
Thus we are given a collection 
$\{ A_i \}_{i \in \opn{Ob} (G)}$ of objects of the crossed groupoid $\cat{P}$.
For any arrow $g : i \to j$ in $G$ there is given an isomorphism 
$\bsym{A}(g) : A_i \to A_j$
in $\cat{P}_1$. This tells us how we may try to identify the objects
$A_i$ and $A_j$. 

For $i \in \opn{Ob} (G)$ there is given a group isomorphism (the coupling)
\[ \opn{cp} : G(i) = \opn{Aut}_{G}(i) \iso
(\opn{IG} \circ \, \bsym{A})(i) = \opn{IG}(A_i) . \]
It forces the groupoid $G$ to be comprised of inner gauge
groups. But now an element $g \in G(i)$ has two
possible actions on the object $A_i$: it can act as $\bsym{A}(g)$, or
it can act as $\opn{ig}(\opn{cp}(g))$. Condition ($*$) says that
these two actions coincide. 

\begin{exa} \label{exa:8}
Suppose $A$ is an object of $\cat{P}$. We can turn it into a
twisted object of $\cat{P}$ as follows. Let $G$ be the one
object groupoid, say $\opn{Ob}(G) := \{ 0 \}$, with
$G(0,0) := \opn{IG}(A)$. 
The functor $\bsym{A} : G \to \cat{P}_1$ is 
$\bsym{A}(0) := A$ and $\bsym{A}(g) := \opn{ig}(g)$. 
The coupling isomorphism $\opn{cp}$ is the identity of
$\opn{IG}(A)$. 
We refer to $\bsym{A}$ as the {\em twisted object generated by $A$}.
\end{exa}

A more interesting example is:

\begin{exa} \label{exa:7}
Suppose $\cat{A}$ is an $R$-linear algebroid; namely it is an $R$-linear
category, nonempty and connected by isomorphisms (see \cite{Ko2}). We are going
to turn it into a twisted object of $\cat{Assoc}(R)$, or more precisely of the
crossed groupoid $\cat{Assoc}^{\times}(R)$ from Example \ref{exa:6}.
Consider the nonempty connected groupoid $G := \cat{A}^{\times}$.
The functor $\bsym{A} : G \to \cat{Assoc}(R)$
is defined by 
$\bsym{A}(i) = \cat{A}(i,i) \in \opn{Ob}(\cat{Assoc}(R))$ 
for any $i \in \opn{Ob}(G) = \opn{Ob}(\cat{A})$,
and for any  
$g \in G(i, j) = \cat{A}^{\times}(i,j)$
the isomorphism 
$\bsym{A}(g) : \bsym{A}(i) \to \bsym{A}(j)$ is 
$\bsym{A}(g)(a) := g \circ a \circ g^{-1}$.
The coupling isomorphism
$\opn{cp} : G(i) \to \opn{IG}(\bsym{A}(i))$
is the identity automorphism of
$G(i) = \cat{A}(i,i)^{\times} = \opn{IG}(\bsym{A}(i))$.

It is easy to see that every twisted object of  $\cat{Assoc}^{\times}(R)$ arises
this way from an $R$-linear algebroid. Thus the two concepts are the same.
\end{exa}

\begin{dfn} \label{dfn:29}
Let $(G, \bsym{A}, \opn{cp})$ and 
$(G', \bsym{A}', \opn{cp}')$ be twisted objects in a crossed groupoid $\cat{P}$.
A {\em twisted gauge transformation}
\[ \Phi = (\Phi_{\mrm{gau}}, \Phi_{\mrm{rep}}) :
(G, \bsym{A}, \opn{cp}) \to (G', \bsym{A}', \opn{cp}') \]
consists of an equivalence (of groupoids)
$\Phi_{\mrm{gau}} : G \to G'$,
and a natural isomorphism 
$\Phi_{\mrm{rep}} : \bsym{A} \twoiso \bsym{A}' \circ \Phi_{\mrm{gau}}$
of functors $G \to \cat{P}_1$.
The condition is that the diagram 
\[ \UseTips \xymatrix @C=12ex @R=6ex {
\opn{Aut}_{G}
\ar@{=>}[r]^{\opn{cp}}
\ar@{=>}[d]_{\opn{Aut}_{\Phi_{\mrm{gau}}}}
& 
\opn{IG} \circ \, \bsym{A}
\ar@{=>}[d]^{\bsym{1}_{\opn{IG}} \circ \Phi_{\mrm{rep}}}
\\
\opn{Aut}_{G'} \circ \, \Phi_{\mrm{gau}}
\ar@{=>}[r]^(0.47){\opn{cp}' \circ\, \bsym{1}_{\Phi_{\mrm{gau}}}}
& 
\opn{IG} \circ \, \bsym{A}' \circ \Phi_{\mrm{gau}}
} \]
of natural transformations between functors $G \to \cat{Grp}$
is commutative.
\end{dfn}

Let us spell out what this definition means for an object
$i \in \opn{Ob} (G)$.
Let $i' := \Phi_{\mrm{gau}}(i) \in \opn{Ob} (G')$.
Then there is an isomorphism
$\Phi_{\mrm{rep}} : \bsym{A}(i) \iso \bsym{A}'(i')$
in $\cat{P}$, and the diagram
\[ \UseTips \xymatrix @C=6ex @R=5ex {
G(i)
\ar[r]^(0.4){\opn{cp}}
\ar[d]_{\Phi_{\mrm{gau}}}
& 
\opn{IG}(\bsym{A}(i))
\ar[d]^{\opn{IG}(\Phi_{\mrm{rep}})}
\\
G'(i')
\ar[r]^(0.4){\opn{cp}'}
& 
\opn{IG}(\bsym{A}'(i'))
} \]
in $\cat{Grp}$ is commutative. 

It is easy to see that twisted gauge transformations form an equivalence
relation. We refer to this equivalence relation a {\em twisted gauge
equivalence}, and we write
\[ \ol{\opn{TwOb}}(\cat{P}) :=
\frac{ \opn{TwOb}(\cat{P}) }
{\tup{ twisted gauge equivalence}}  . \]

\begin{rem}
If one examines things a little, it becomes evident that a
twisted object $(G, \bsym{A}, \opn{cp})$
in $\cat{P}$ is twisted gauge equivalent to the twisted object
generated by $\bsym{A}(i) \in \cat{P}$, for any $i \in \opn{Ob} (G)$ (as
in Example \ref{exa:8}). Thus the whole concept is quite uninteresting. 

However, in the geometric context, where the crossed groupoid $\cat{P}$ is
replaced by a stack of crossed groupoids $\twocat{P}$ on a topological space
$X$, the concept becomes interesting: {\em really twisted objects}
(Definition \ref{dfn:25}) appear.
\end{rem}

We are ready to pass to the geometric context.

\begin{dfn} \label{dfn:11}
Let $X$ be a topological space, and let
\[ \twocat{P} = ( \twocat{P}_1, \twocat{P}_2, 
\opn{Ad}_{\twocat{P}_1 \crvar \twocat{P}_{2}}, \opn{D} ) \]
be a stack of crossed groupoids on $X$ (Definition \ref{dfn:200}). 
A {\em twisted object of $\twocat{P}$} is a triple 
$(\gerbe{G}, \gerbe{A}, \opn{cp})$ consisting of:
\begin{enumerate}
\item A small gerbe $\gerbe{G}$ on X, called the
{\em gauge gerbe}.
\item A functor of stacks
$\gerbe{A} : \gerbe{G} \to \twocat{P}_1$, called the {\em representation}.
\item A natural isomorphism
$\opn{cp} : \opn{Aut}_{\gerbe{G}} \twoiso \opn{IG} \circ \, \gerbe{A}$
between functors of stacks $\gerbe{G} \to \twocat{Grp}(X)$, 
called the {\em coupling isomorphism}.
\end{enumerate} 
The condition is: 
\begin{enumerate}
\rmitem{$*$} The diagram 
\[ \UseTips \xymatrix @C=7ex @R=7ex {
\opn{Aut}_{\gerbe{G}}
\ar@{=>}[r]^{\opn{cp}}
\ar@{=>}[dr]_{\opn{Aut}_{\gerbe{A}}}
& \opn{IG} \circ \, \gerbe{A}
\ar@{=>}[d]^{\opn{ig} \circ \, \bsym{1}_{\gerbe{A}}} \\
& \opn{Aut}_{\twocat{P}_1} \circ \, \gerbe{A}
}  \]
of natural transformations
between functors of stacks $\gerbe{G} \to \twocat{Grp}(X)$, is
commutative. 
\end{enumerate}

The set of twisted objects of $\twocat{P}$ is denoted by
$\opn{TwOb}(\twocat{P})$
\end{dfn}

What this definition amounts to is that on every open set $U$
the triple \linebreak
$\bigl( \gerbe{G}(U), \gerbe{A}, \opn{cp} \bigr)$
is almost a twisted object in the crossed groupoid $\twocat{P}(U)$; 
the exception is that the groupoid $\gerbe{G}(U)$ might be empty or
disconnected. These triples restrict correctly to smaller open sets.

In other words, to any local object
$i \in \opn{Ob} (\gerbe{G}(U))$ on an open set $U \subset X$
we attach an object $\gerbe{A}(i) \in \opn{Ob} (\twocat{P}(U))$, 
which we can also view as a sheaf on $U$ (since $\twocat{P}_1$ is a
stack). There is also a sheaf of groups 
$\opn{IG}(\gerbe{A}(i)) = \twocat{P}_2(\gerbe{A}(i))$ on $U$, 
and a homomorphism of sheaves of groups
$\opn{D} : \opn{IG}(\gerbe{A}(i)) \to \twocat{P}_1(\gerbe{A}(i))$.
To any other object $j \in \opn{Ob} (\gerbe{G}(U))$ and any
arrow $g \in \gerbe{G}(U)(i, j)$ we attach an isomorphism
$\gerbe{A}(g) : \gerbe{A}(i) \iso \gerbe{A}(j)$
in $\twocat{P}_1(U)$.
The various locally defined isomorphisms $\gerbe{A}(g)$ are
related by the composition rule in the gerbe $\gerbe{G}$.

When there is no danger of confusion we refer to the twisted object
$(\gerbe{G}, \gerbe{A}, \opn{cp})$ as $\gerbe{A}$, and we call
$\gerbe{G}$ the gauge gerbe of $\gerbe{A}$. 
An object $\gerbe{A}(i)$, for some open set $U \subset X$ and 
$i \in \opn{Ob} (\gerbe{G}(U))$, is called a {\em local object belonging
to $\gerbe{A}$}, or a {\em sheaf belonging to $\gerbe{A}$}.

\begin{exa} \label{exa:21}
Take the stack of crossed groupoids
$\twocat{Grp}^{\times}(X)$ from Example \ref{exa:9}. A twisted
object of $\twocat{Grp}^{\times}(X)$ is just a gerbe; and hence 
$\opn{TwOb}(\twocat{Grp}^{\times}(X)) = \twocat{Gerbe}(X)$.
\end{exa}

\begin{exa} \label{exa:tw.sh.101}
Consider the stack of crossed groupoids 
$\twocat{Assoc}^{\times}(R, X)$ from Example 
\ref{exa:10}. A twisted object of $\twocat{Assoc}^{\times}(R, X)$ is a stack of
$R$-algebroids on $X$, as defined in \cite{Ko2}; cf.\ Example \ref{exa:7}.
\end{exa}

We can finally define twisted deformations. 
The stacks $\twocat{AssDef}(R, \mcal{O}_X)$ and 
$\twocat{PoisDef}(R, \mcal{O}_X)$ were introduced in Definition \ref{dfn:210}. 

\begin{dfn} \label{dfn:24}
Assume Setup \ref{setup:200}. 
\begin{enumerate}
\item A twisted object of the stack of crossed groupoids
$\twocat{AssDef}(R, \mcal{O}_X)$ is called a {\em twisted
associative $R$-deformation of $\mcal{O}_X$}.
\item A twisted object of the  stack of crossed groupoids
$\twocat{PoisDef}(R, \mcal{O}_X)$ is called a {\em twisted
Poisson $R$-deformation of $\mcal{O}_X$}.
\end{enumerate}
\end{dfn}

\begin{dfn} \label{dfn:33}
Let $\twocat{P}$ be a stack of crossed groupoids on $X$, and 
let $(\gerbe{G}, \gerbe{A}, \opn{cp})$ and
$(\gerbe{G}', \gerbe{A}', \opn{cp}')$ be twisted objects of $\twocat{P}$.
A {\em twisted gauge transformation} 
\[ \Phi = (\Phi_{\mrm{gau}}, \Phi_{\mrm{rep}}) :
(\gerbe{G}, \gerbe{A}, \opn{cp}) \to 
(\gerbe{G}', \gerbe{A}', \opn{cp}') \]
consists of an equivalence of gerbes 
$\Phi_{\mrm{gau}} : \gerbe{G} \to \gerbe{G}'$,
and an isomorphism 
$\Phi_{\mrm{rep}} : \gerbe{A} \twoiso \gerbe{A}' \circ \Phi_{\mrm{gau}}$
of functors of stacks  $\gerbe{G} \to \twocat{P}_1$.
The condition is that the diagram 
\[ \UseTips \xymatrix @C=12ex @R=6ex {
\opn{Aut}_{\gerbe{G}}
\ar@{=>}[r]^{\opn{cp}}
\ar@{=>}[d]_{\opn{Aut}_{\Phi_{\mrm{gau}}}}
& 
\opn{IG} \circ \, \gerbe{A}
\ar@{=>}[d]^{\bsym{1}_{\opn{IG}} \circ \Phi_{\mrm{rep}}}
\\
\opn{Aut}_{\gerbe{G}'} \circ \, \Phi_{\mrm{gau}}
\ar@{=>}[r]^(0.47){\opn{cp}' \circ\, \bsym{1}_{\Phi_{\mrm{gau}}}}
& 
\opn{IG} \circ \, \gerbe{A}' \circ \Phi_{\mrm{gau}}
} \]
of natural transformations of functors of stacks
$\gerbe{G} \to \twocat{Grp} (X)$
is commutative.
\end{dfn}

Thus for every open set $U \subset X$ there is a twisted gauge
transformation
\[ \Phi = (\Phi_{\mrm{gau}}, \Phi_{\mrm{rep}}) :
\bigl( \gerbe{G}(U), \gerbe{A}, \opn{cp} \bigr) \to 
\bigl( \gerbe{G}'(U), \gerbe{A}', \opn{cp}' \bigr)  \]
as in Definition \ref{dfn:29}; and these are compatible with
restriction to smaller open sets.

A bit of calculation shows that twisted gauge transformations form an
equivalence relation on the
set $\opn{TwOb}(\twocat{P})$. 

\begin{dfn}
The equivalence relation given by twisted gauge transformations
is called {\em twisted gauge equivalence}, and we write
\[ \ol{\opn{TwOb}}(\twocat{P}) :=
\frac{ \opn{TwOb}(\twocat{P}) }
{\tup{ twisted gauge equivalence}} \ . \] 
\end{dfn}

\begin{rem} \label{rem:300}
Let us write $\twocat{P}(R, X)$ for either of the stacks of crossed groupoids
$\twocat{AssDef}(R, \mcal{O}_X)$ or $\twocat{PoisDef}(R, \mcal{O}_X)$.
One can show directly (but not so easily) that a homomorphism of parameter
algebras $R \to R'$ induces a function 
$\ol{\opn{TwOb}}(\twocat{P}(R, X)) \lb \to \ol{\opn{TwOb}}(\twocat{P}(R', X))$.
This is proved (indirectly) in Theorem \ref{thm:7}.
\end{rem}

\begin{rem} \label{rem:6}
In the paper \cite{KS2} the authors use the term {\em DQ algebroid} 
to denote a $\K[[\hbar]]$-linear algebroid $\gerbe{A}$ that locally looks like
an associative $\K[[\hbar]]$-deformation of $\mcal{O}_X$.
This is very close to being a twisted associative $\K[[\hbar]]$-deformation of
$\mcal{O}_X$ in our sense.
Indeed, a DQ algebroid $\gerbe{A}$ is a twisted object in the stack of crossed
groupoids $\twocat{DQ}$ in which the local objects are the sheaves
of $\K[[\hbar]]$-algebras $\mcal{A}$ locally isomorphic to $\OX[[\hbar]]$; 
the gauge transformations are the $\K[[\hbar]]$-algebra isomorphisms;
and the inner gauge group $\opn{IG}(\mcal{A})$ is the whole group
$\mcal{A}^{\times}$.
As a consequence, for DQ algebroids there are also $0$-th order
obstruction classes in $\mrm{H}^i(X, \mcal{O}_X^{\times})$,
$i = 1, 2$ (cf.\ Theorem
\ref{thm:6}, and \cite[Theorems 4.6 and 4.17]{Ye4}). We thank P.
Polesello for explaining this subtlety to us.
\end{rem}

\begin{rem}
Suppose $\gerbe{A}$ is a twisted associative $R$-deformation of
$\mcal{O}_X$ (with $R$ artinian). One can consider the stack of $R$-linear
abelian categories $\twocat{Coh} \gerbe{A}$ of coherent left
$\gerbe{A}$-modules. It is a deformation of the stack 
$\twocat{Coh} \mcal{O}_X$. The twisted associative deformation 
$\gerbe{A}$ can be recovered from the stack $\twocat{Coh} \gerbe{A}$;
and in fact these two notions of deformation are equivalent
(it is a kind of geometric Morita theory).
See the papers \cite{Ko2, LV, Lo, KS2, DP2} and the last chapter of the
book \cite{KS1}.
We do not know a similar interpretation of twisted Poisson
deformations.
\end{rem}

Suppose $U \subset X$ is an open set, and $\mcal{A}$ is an
associative or Poisson $R$-deformation of $\mcal{O}_U$. 
In Definition \ref{dfn:20} we defined the first order bracket
$\{ -,- \}_{\mcal{A}}$ on $\mcal{O}_U$. 
By Proposition \ref{prop:20} this is gauge invariant. Therefore the
next definition makes sense.

\begin{dfn} \label{dfn:22}
Let $\gerbe{A}$ be a twisted associative or
Poisson $R$-deformation of $\mcal{O}_X$. 
We define the {\em first order bracket of $\gerbe{A}$}
to be the unique $\K$-bilinear sheaf morphism
\[ \{ -,- \}_{\gerbe{A}} : 
\mcal{O}_X \times \mcal{O}_X \to (\m / \m^2) \ot  \mcal{O}_X
\]
having this property:
\begin{enumerate}
\rmitem{$*$} Let $\gerbe{G}$ be the gauge gerbe of $\gerbe{A}$, and let 
$i \in \opn{Ob} (\gerbe{G}(U))$ for some open set 
$U \subset X$. Consider $\mcal{A} := \gerbe{A}(i)$, which is an 
$R$-deformation of $\mcal{O}_U$. Then the restriction of
$\{ -,- \}_{\gerbe{A}}$ to $U$ equals 
$\{ -,- \}_{\mcal{A}}$. 
\end{enumerate}
\end{dfn}

Again, Proposition \ref{prop:20}  implies that if $\gerbe{A}$ and
$\gerbe{A}'$ are twisted associative 
\tup{(}resp.\ Poisson\tup{)} $R$-deformations of $\mcal{O}_X$ which
are twisted gauge equivalent, then 
$\{ -,- \}_{\gerbe{A}} = \{ -,- \}_{\gerbe{A}'}$.
This means that we can talk about the first order bracket of an
element of \linebreak
$\ol{\opn{TwOb}} \bigl( \twocat{AssDef}(R, \mcal{O}_X) \bigr)$
or
$\ol{\opn{TwOb}} \bigl( \twocat{PoisDef}(R, \mcal{O}_X) \bigr)$.

\begin{dfn} \label{dfn:25}
Let $(\gerbe{G}, \gerbe{A}, \opn{cp})$ be a twisted object of some
stack of crossed groupoids $\twocat{P}$ on $X$. We say that 
$(\gerbe{G}, \gerbe{A}, \opn{cp})$ is {\em really twisted} if there are no
global objects belonging to it; namely if 
$\opn{Ob} (\gerbe{G}(X)) = \emptyset$.
\end{dfn}

Sometimes there are obstruction classes that determine whether a
twisted object is really twisted (see \cite[Section 4]{Ye4}).

\section{Combinatorial Descent Data}
\label{sec:cosim}

We begin with a quick review of cosimplicial theory. 
Let $\bsym{\Delta}$ denote the simplex category. The set
of objects of $\bsym{\Delta}$ is the set $\mbb{N}$ of natural
numbers. Given $p, q \in \mbb{N}$, the morphisms $\alpha : p \to q$
in $\bsym{\Delta}$ are the order preserving functions
$\alpha : \{ 0, \ldots, p \} \to \{ 0, \ldots, q \}$.
We denote this set of morphisms by
$\bsym{\Delta}^q_p$.
An element of $\bsym{\Delta}^q_p$ may be thought of as a sequence
$\bsym{i} = (i_0, \ldots, i_p)$ of integers with
$0 \leq i_0 \leq \cdots \leq i_p \leq q$. 
We call 
$\bsym{\Delta}^q := \{ \bsym{\Delta}^q_p \}_{p \in \mbb{N}}$
the $q$-dimensional combinatorial simplex, and an element
$\bsym{i} \in \bsym{\Delta}^q_p$ is a $p$-dimensional face of
$\bsym{\Delta}^q$. 

Let $\cat{C}$ be some category. A {\em cosimplicial object} in 
$\cat{C}$ is a functor $C : \bsym{\Delta} \to \cat{C}$.
We shall usually write $C^p := C(p) \in \opn{Ob} (\cat{C})$, and leave
the morphisms
$C(\alpha) : C(p) \to C(q)$, for $\alpha \in \bsym{\Delta}^q_p$,
implicit. Thus we shall refer to the cosimplicial object $C$ as
$\{ C^p \}_{p \in \mbb{N}}$.
The category of cosimplicial objects in $\cat{C}$, where the morphisms 
are natural transformations of functors 
$\bsym{\Delta} \to \cat{C}$, is denoted by 
$\bsym{\Delta}(\cat{C})$.

If $\cat{C}$ is a category of sets with structure (i.e.\ there is a
faithful functor $\cat{C} \to \cat{Set}$), then an object 
$C \in \opn{Ob}(\cat{C})$ has elements $c \in C$. 
Let $\{ C^p \}_{p \in \mbb{N}}$ be a cosimplicial object of $\cat{C}$. 
Given a face $\bsym{i} \in \bsym{\Delta}_p^q$
and an element $c \in C^p$, it will be convenient to write
\begin{equation} \label{eqn:101}
c_{\bsym{i}} := C(\bsym{i})(c) \in C^q .
\end{equation}
The picture to keep in mind is of ``the element $c$ pushed to the face
$\bsym{i}$ of the simplex $\bsym{\Delta}^q$''. 
See Figure \ref{fig:2} for an illustration.

We shall be interested in cosimplicial crossed groupoids, i.e.\ in objects of
the category $\bsym{\Delta}(\cat{CrGrpd})$. A cosimplicial crossed groupoid 
$G = \{ G^p \}_{p \in \mbb{N}}$
has in each simplicial dimension $p$ a crossed groupoid $G^p$. 
The morphisms $G(\bsym{i}) : G^p \to G^q$, 
for $\bsym{i} \in \bsym{\Delta}_p^q$, are implicit, and we use notation
(\ref{eqn:101}). 

Let us fix $p \in \N$. Then for any 
$\om \in \opn{Ob}(G^p)$ there is a group homomorphism  (the feedback)
$\opn{D} : G^p_2(\om) \to  G^p_1(\om)$.
And for every morphism $g : \om \to \om'$ in $G_1^p$ there is a group
isomorphism (the twisting)
\[ \opn{Ad}_{G^p_1 \curvearrowright G^p_{2}}(g) :
G^p_2(\om) \to G^p_2(\om') . \]
We shall often use the abbreviated expression
$\opn{Ad}(g)$ to mean both $\opn{Ad}_{G^p_1 \curvearrowright G^p_{2}}(g)$
and 
$\opn{Ad}_{G^p_1}(g)$; hopefully that will not cause confusion.

\begin{dfn} \label{dfn:101}
Let $G = \{ G^p \}_{p \in \mbb{N}}$ be a cosimplicial
crossed groupoid. A {\em combinatorial descent datum} in $G$
is a triple $(\om, g, a)$ of elements of the following sorts:
\begin{enumerate}
\rmitem{0}  $\om \in \opn{Ob}(G^0)$. 

\rmitem{1} $g \in G^1_1(\om_{(0)}, \om_{(1)})$, where 
$\om_{(0)}, \om_{(1)} \in \opn{Ob}(G^1)$ are the objects corresponding
to the vertices $(0)$ and $(1)$ in $\bsym{\Delta}^1$.

\rmitem{2} $a \in G^2_2(\om_{(0)})$, where 
$\om_{(0)} \in \opn{Ob}(G^2)$ is the object corresponding
to the vertex $(0)$ of $\bsym{\Delta}^2$.
\end{enumerate}

The conditions are as follows:
\begin{enumerate}
\rmitem{i} (Failure of $1$-cocycle) 
\[ g_{(0, 2)}^{-1} \circ g_{(1, 2)} \circ g_{(0, 1)} = \opn{D}(a) \]
in the group $G^2_1(\om_{(0)})$.
Here $\om_{(i)} \in \opn{Ob}(G^2)$ 
and $g_{(i, j)} \in G^2_1(\om_{(i)}, \om_{(j)})$ 
correspond to the faces $(i)$ and $(i, j)$ respectively of $\bsym{\Delta}^2$.

\rmitem{ii} (Twisted $2$-cocycle) 
\[ a_{(0, 1, 3)}^{-1} \circ a_{(0, 2, 3)} \circ 
a_{(0, 1, 2)} = 
\opn{Ad}(g_{(0, 1)}^{-1})(a_{(1, 2, 3)}) \]
in the group
$G^3_2(\om_{(0)})$.
Here $\om_{(i)} \in \opn{Ob}(G^3)$, 
$g_{(i, j)} \in G^3_1(\om_{(i)}, \om_{(j)})$ and
$a_{(i, j, k)} \in G^3_2(\om_{(i)})$
correspond to the faces $(i)$,  $(i, j)$ and $(i, j, k)$ respectively of
$\bsym{\Delta}^3$.
\end{enumerate}

We denote by $\opn{Desc}(G)$ the set of all
descent data in $G$. 
\end{dfn}

See Figure \ref{fig:2} for an illustration.

\begin{figure}
\begin{center}
\includegraphics[scale=0.6]{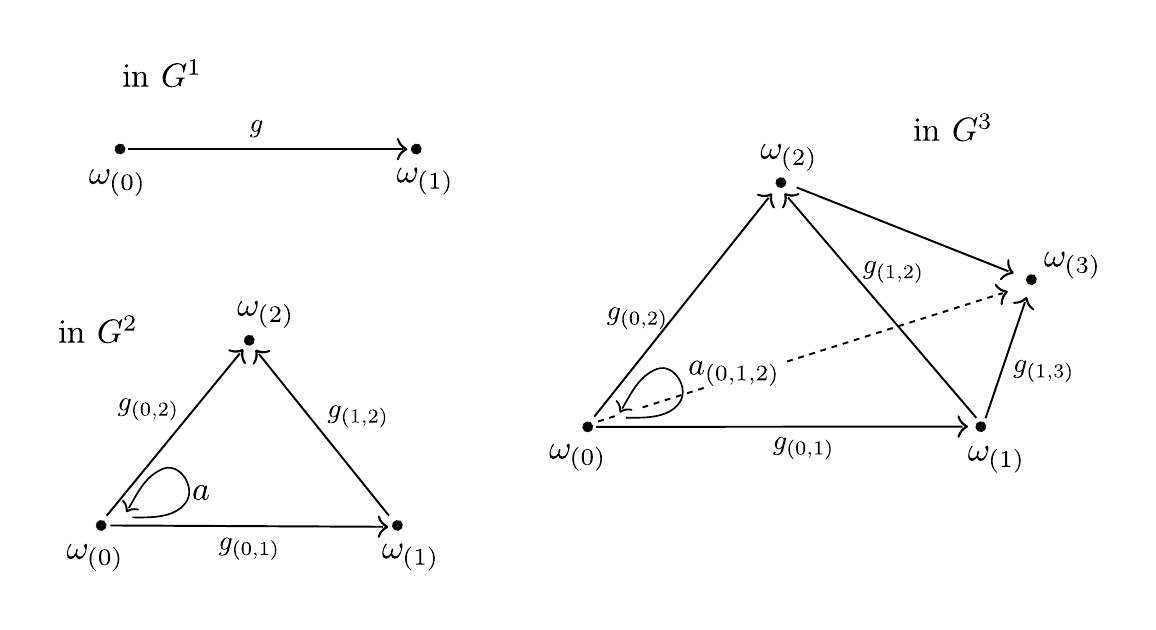}
\caption{Illustration of a combinatorial descent datum $(\om, g, a)$ in the
cosimplicial crossed groupoid 
$G = \{ G^p \}_{p \in \mbb{N}}$.}
\label{fig:2}
\end{center}
\end{figure}

\begin{dfn} \label{dfn:100}
Let $(\om, g, a)$ and $(\om', g', a')$
be combinatorial descent data in the cosimplicial crossed groupoid 
$G$. A gauge transformation
$(\om, g, a) \to (\om', g', a')$ is a pair $(f, b)$ 
of elements of the following sorts:
\begin{enumerate}
\rmitem{0}  $f \in G^0_1(\om, \om')$.

\rmitem{1} $b \in G^1_2(\om_{(0)})$, where 
$\om_{(0)} \in \opn{Ob}(G^1)$ is the object corresponding
to the vertex $(0)$ in $\bsym{\Delta}^1$.
\end{enumerate}

These two conditions must hold:
\begin{enumerate}
\rmitem{i}  
\[ g'  = f_{(1)} \circ g \circ \opn{D}(b) \circ f_{(0)}^{-1} \]
in the set
$G^1_1(\om'_{(0)}, \om'_{(1)})$.

\rmitem{ii}
\[ a' =   
\opn{Ad}(f_{(0)}) \Bigl( b_{(0, 2)}^{-1} \circ a \circ 
\opn{Ad}(g_{(0, 1)}^{-1})(b_{(1, 2)}) \circ b_{(0, 1)} \Bigr) \]
in the group $G^2_2(\om'_{(0)})$.
\end{enumerate}
\end{dfn}

This is illustrated in Figure \ref{fig:3}.

\begin{figure}
\begin{center}
\includegraphics[scale=0.55]{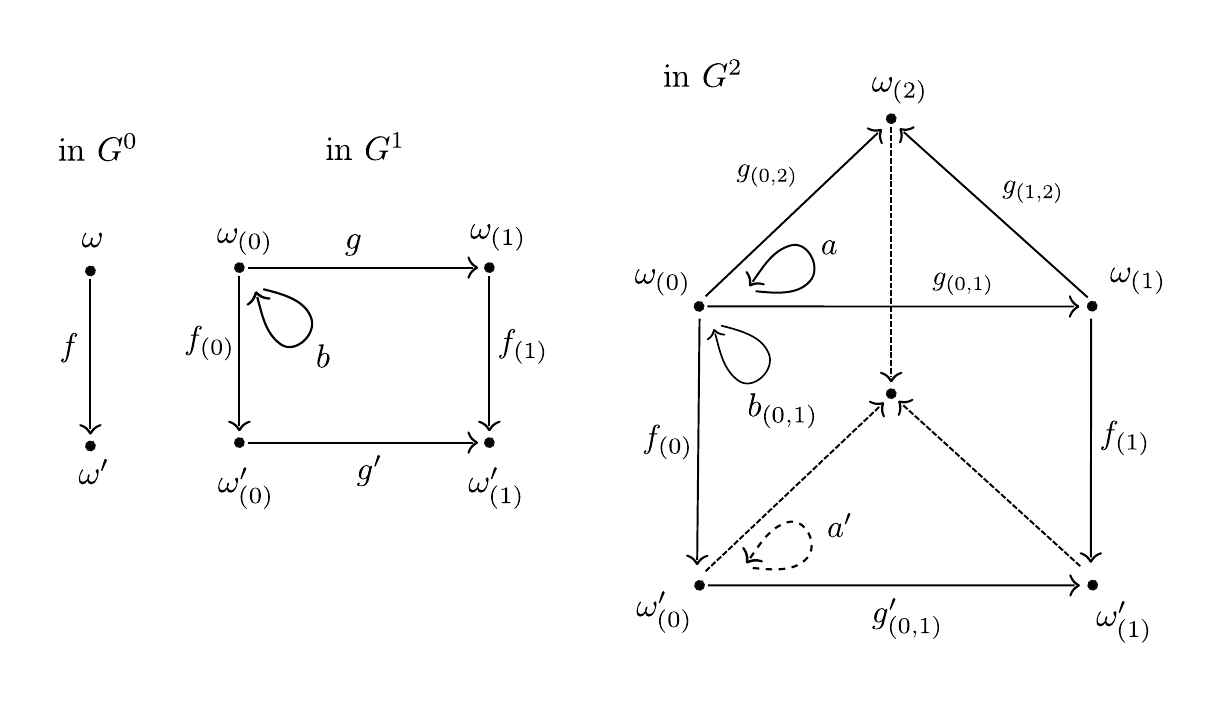}
\caption{Illustration of a gauge transformation 
$(f, b) : (\om, g, a) \to (\om', g', a')$
between descent data.}
\label{fig:3}
\end{center}
\end{figure}

The gauge transformations form an equivalence relation on the set
$\opn{Desc}(G)$; see \cite[Proposition 1.6]{Ye8}.
We call this relation {\em gauge equivalence}.

Let $\Phi : G \to H$ be a morphism of cosimplicial
crossed groupoids, i.e.\ a morphism in $\bsym{\Delta}(\cat{CrGrpd})$. 
Given a descent datum 
$(\om, g, a) \in \opn{Desc}(G)$, the triple 
$\Phi(\om, g, a) := (\Phi(\om), \Phi(g), \Phi(a))$
is a descent datum in $H$. The resulting function 
$\opn{Desc}(\Phi) : \opn{Desc}(G) \to \opn{Desc}(H)$
respects the gauge equivalence relations. 

\begin{dfn} \label{dfn:cosim.1}
For a cosimplicial crossed groupoid $G$ we write
\[ \ol{\opn{Desc}}(G) :=
\frac{ \opn{Desc}(G) } {\tup{ gauge equivalence}} . \]
For a morphism $\Phi : G \to H$
of cosimplicial crossed groupoids, we denote by 
\[ \ol{\opn{Desc}}(\Phi) : \ol{\opn{Desc}}(G) \to
\ol{\opn{Desc}}(H) \]
the induced function. 
\end{dfn}

\begin{dfn} \label{dfn:equiv-cosim.103}
A morphism $\Phi : G \to H$
of cosimplicial crossed groupoids is called a {\em weak equivalence} if in
every simplicial dimension $p$ the morphism of crossed groupoids
 $\Phi^p : G^p \to H^p$ is a weak equivalence (Definition
\ref{dfn:equiv-cosim.102}).
\end{dfn}

The next result is proved in the companion paper \cite{Ye8}.
An alternative proof (of a more general statement) can be found in M. Prasma's
paper \cite{Pr}. 

\begin{thm}[{Combinatorial Equivalence, \cite[Theorem 0.1]{Ye8}}]
\label{thm:100} 
Let $\Phi : G \to H$ be a weak equivalence between 
cosimplicial crossed groupoids. Then the function 
\[ \ol{\opn{Desc}}(\Phi) : \ol{\opn{Desc}}(G) \to
\ol{\opn{Desc}}(H) \]
from Definition \tup{\ref{dfn:cosim.1}} is bijective.
\end{thm}

\begin{rem} \label{rem:cosim.106}
If  we view the cosimplicial crossed groupoid as a cosimplicial simplicial set,
then the set of descent data  $\opn{Desc}(G)$ becomes the {\em total space} of
$G$. See \cite{BGNT2} and \cite{Pr}.

We discovered Theorem \ref{thm:100} while trying to understand, and possibly
improve, \cite[Proposition 3.3.1]{BGNT1} and the ideas in \cite{CH}. 
The precise relation to \cite[Proposition 3.3.1]{BGNT1} is explained in Remark
\ref{rem:Lie-desc.101}. As for \cite{CH}: there seems to be an implicit
assumption there that the nerve $\opn{N}(G)$ is a Reedy fibrant
cosimplicial simplicial set. This might not be true -- see \cite{Pr}.
\end{rem}

\section{Geometric Descent Data and Decomposition}
\label{sec:mdd}

In this section we study the decomposition of twisted objects on open
coverings.

Given a set $K$ and $p \in \N$, we let 
\begin{equation} \label{eqn:mdd.111}
\bsym{\Delta}_p(K) := \{ (k_0, \ldots, k_p) \mid k_i \in K \} .
\end{equation}
The collection $\{ \bsym{\Delta}_p(K) \}_{p \in \N}$ is a simplicial set. 
Recall that for an open covering $\bsym{U} = \{ U_k \}_{k \in K}$ of a
topological space $X$, and for 
$\bsym{k} = (k_0, \ldots, k_p) \in \bsym{\Delta}_p(K)$, 
we write 
$U_{\bsym{k}} = U_{k_0} \cap \cdots \cap U_{k_p}$.

Let $\{ G^j \}_{j \in J}$ be a collection of crossed groupoids
indexed by some set $J$.
Their product is the crossed groupoid  
$\prod_{j \in J} G^j$, with set of objects 
$\opn{Ob}(\prod_{j \in J} G^j) := 
\prod_{j \in J} \opn{Ob}(G^j)$
etc.

The notion of stack of crossed groupoids
\[ \twocat{P} = ( \twocat{P}_1, \twocat{P}_2, 
\opn{Ad}_{\twocat{P}_1 \crvar \twocat{P}_{2}}, \opn{D} ) \]
on a topological space $X$ was introduced in Definition \ref{dfn:200}.
We will make frequent use of the functor of stacks 
$\opn{IG} : \twocat{P}_1 \to \twocat{Grp}(X)$,
and the natural transformation
$\opn{ig} : \opn{IG} \twoto \opn{Ad}_{\twocat{P}_1}$,
from Proposition \ref{prop:201}.

\begin{dfn} \label{dfn:mdd.101}
Let $\twocat{P}$ be a stack of crossed groupoids on a
topological space $X$, and let $\bsym{U} = \{ U_k \}_{k \in K}$ be an
open covering of $X$. For any $p \in \N$ define the crossed groupoid 
\[ \opn{C}^p(\bsym{U}, \twocat{P}) := 
\prod_{\bsym{k} \in \bsym{\Delta}_p(K)} \twocat{P}(U_{\bsym{k}}) . \]

The restriction morphisms 
$\mrm{rest}^{}_{V / U}$ induce a morphism of crossed groupoids 
$\opn{C}^p(\bsym{U}, \twocat{P}) \to \opn{C}^q(\bsym{U}, \twocat{P})$
for any $\al : p \to q$ in $\bsym{\Delta}$. 
In this way we obtain a cosimplicial crossed groupoid 
$\opn{C}(\bsym{U}, \twocat{P}) = 
\{ \opn{C}^p(\bsym{U}, \twocat{P}) \}_{p \in \N}$,
which we refer to as the {\em \v{C}ech cosimplicial crossed groupoid}
associated to $\twocat{P}$ and $\bsym{U}$. 
\end{dfn}

The set of combinatorial descent data 
$\opn{Desc}(G)$ in a cosimplicial crossed groupoid $G$ 
was defined in Definition \ref{dfn:101}.

\begin{dfn} \label{dfn:21}
Let $\twocat{P}$ be a stack of crossed groupoids on a
topological space $X$, and let $\bsym{U} = \{ U_k \}_{k \in K}$ be an
open covering of $X$. A {\em geometric descent datum} for 
$(\bsym{U}, \twocat{P})$ is an element of the set 
$\opn{Desc}(\opn{C}(\bsym{U}, \twocat{P}))$.

Given $\bsym{d}, \bsym{d}' \in \opn{Desc}(\opn{C}(\bsym{U}, \twocat{P}))$,
a {\em gauge transformation} 
$\bsym{f} : \bsym{d} \to \bsym{d}'$ is a gauge transformation
of combinatorial descent data, in the sense of Definition \ref{dfn:100}.
Passing to gauge equivalence classes we get the set
$\ol{\opn{Desc}}(\opn{C}(\bsym{U}, \twocat{P}))$.
\end{dfn}

Here is an explicit interpretation of geometric descent data,
gotten by unraveling the definitions. A descent datum 
$\bsym{d} \in \opn{Desc}(\opn{C}(\bsym{U}, \twocat{P}))$ 
is actually a collection 
\begin{equation} \label{eqn:mdd.120}
\bsym{d} = \bigl( \{ \mcal{A}_{k_0} \}_{k_0 \in K}, \,
\{ g_{k_0, k_1} \}_{k_0, k_1 \in K}, \,
\{ a_{k_0, k_1, k_2} \}_{k_0, k_1, k_2 \in K} \bigr) ,
\end{equation}
where
$ \mcal{A}_{k_0} \in \opn{Ob} (\twocat{P}(U_{k_0}))$,
$g_{k_0, k_1}: \mcal{A}_{k_0}|_{U_{k_0, k_1}} \to 
\mcal{A}_{k_1}|_{U_{k_0, k_1}}$
is an isomorphism in $\twocat{P}_1(U_{k_0, k_1})$, and
$a_{k_0, k_1, k_2} \in 
\Gamma \bigl( U_{k_0, k_1, k_2}, \opn{IG}(\mcal{A}_{k_0}) \bigr)$.
Conditions (i-ii) of Definition \ref{dfn:101} say that
\begin{equation} \label{eqn:bitorsors.11}
 g_{k_0, k_2}^{-1} \circ g_{k_1, k_2} \circ g_{k_0, k_1} =
\opn{ig}(a_{k_0, k_1, k_2})
\end{equation}
as automorphisms of
$\mcal{A}_{k_0}|_{U_{k_0, k_1, k_2}}$
in $\twocat{P}_1(U_{k_0, k_1, k_2})$, and 
\begin{equation} \label{eqn:bitorsors.12}
a_{k_0, k_1, k_3}^{-1} \cdot a_{k_0, k_2, k_3} \cdot 
a_{k_0, k_1, k_2} = \opn{IG}(g_{k_0, k_1}^{-1})
(a_{k_1, k_2, k_3})
\end{equation}
in the group
$\Gamma \bigl( U_{k_0, \ldots, k_3}, \opn{IG}(\mcal{A}_{k_0})
\bigr)$.
See Figure \ref{fig:5} for an illustration. 

\begin{figure}
\begin{center}
\includegraphics[scale=0.35]{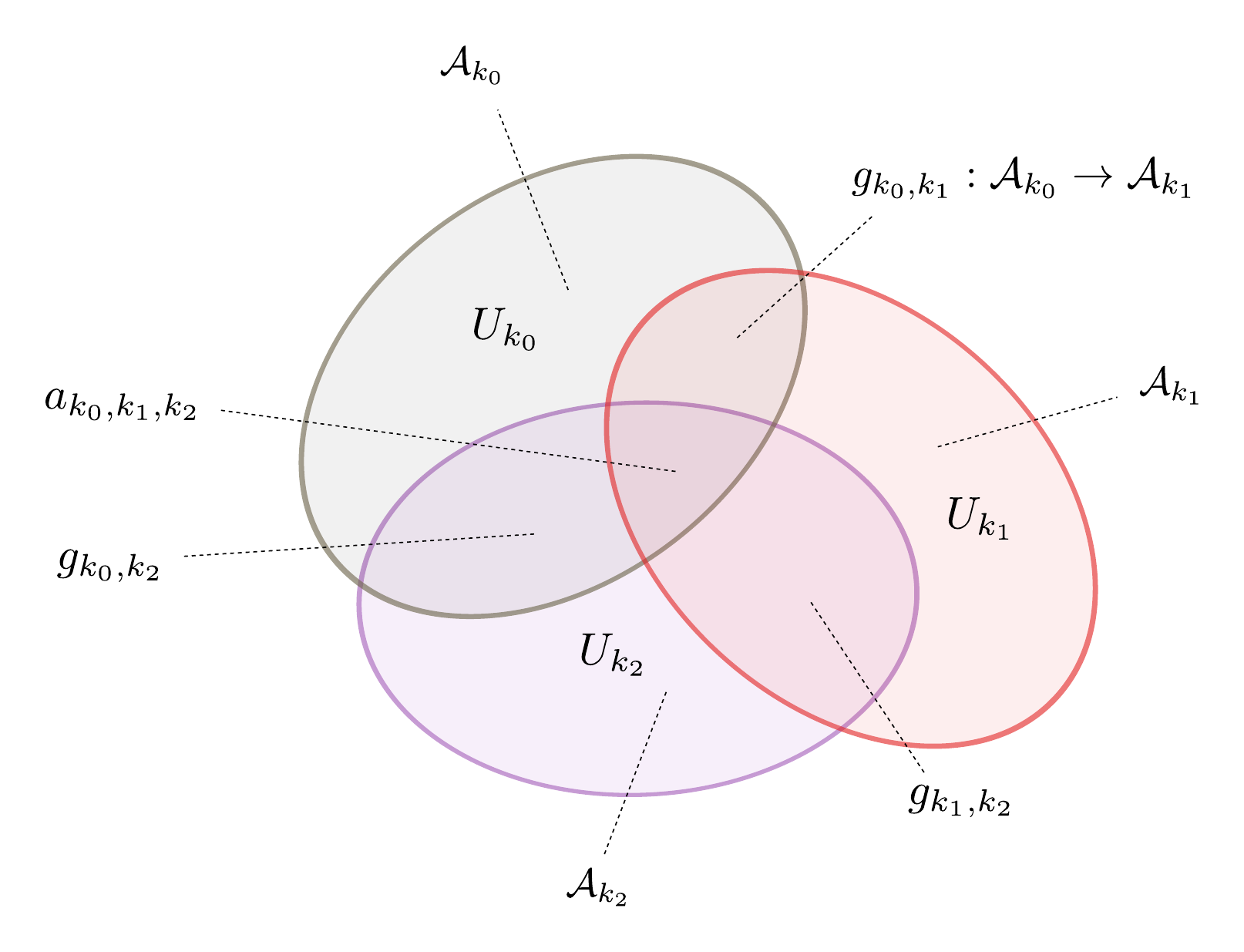}
\caption{Geometric descent datum on an open covering
$\{ U_k \}_{k \in K}$ of a topological space.}
\label{fig:5}
\end{center}
\end{figure}

Given another descent datum 
$\bsym{d}' = \bigl( \{ \mcal{A}'_{k_0} \}, \{ g'_{k_0, k_1} \},
\{ a'_{k_0, k_1, k_2} \} \bigr)$,
a gauge transformation 
$\bsym{f} : \bsym{d} \to \bsym{d}'$ 
is actually a collection 
\begin{equation} \label{eqn:bitorsors.14}
\bsym{f} = \bigl( \{ f_{k_0} \}, \{ b_{k_0, k_1} \} \bigr) ,
\end{equation}
where
$f_{k_0} : \mcal{A}_{k_0} \to \mcal{A}'_{k_0}$
is an isomorphism in $\twocat{P}_1(U_{k_0})$, and
$b_{k_0, k_1} \in \lb \Gamma(U_{k_0, k_1}, \opn{IG}(\mcal{A}_{k_0}))$.
Conditions (i-ii) of Definition \ref{dfn:100} say that  
\begin{equation} \label{eqn:bitorsors.6}
g'_{k_0, k_1} =  f_{k_1} \circ g_{k_0, k_1} \circ \opn{ig}(b_{k_0, k_1})
\circ f_{k_0}^{-1}
\end{equation}
as isomorphisms 
$\mcal{A}'_{k_0}|_{U_{k_0, k_1}} \to \mcal{A}'_{k_1}|_{U_{k_0, k_1}}$,
and 
\begin{equation} \label{eqn:bitorsors.7}
a'_{k_0, k_1, k_2} = \opn{IG}(f_{k_0}) \bigl(  
b_{k_0, k_2}^{-1} \cdot a_{k_0, k_1, k_2} \cdot 
\opn{IG}(g_{k_0, k_1}^{-1})(b_{k_1, k_2}) \cdot b_{k_0, k_1} \bigr)
\end{equation}
in the group $\opn{IG}(\mcal{A}'_{k_0}|_{U_{k_0, k_1, k_2}})$.

\begin{rem}
The similarity between our notion of geometric descent data and
the usual notion of descent data for gerbes (cf.\ \cite[Section IV.3.5]{Gi} or
\cite[Section 5]{Br}) is no coincidence. Indeed, as shown in Example
\ref{exa:21}, gerbes are an instance of twisted objects. Likewise for gauge
transformations between descent data.

In \cite{Ko2}, geometric descent data, for 
$\twocat{P} = \twocat{Assoc}(R, X)$, is called a {\em combinatorial
description of algebroids}.
Note that in the paper \cite{BGNT1} the authors refer to a
geometric descent datum as a ``stack''.
\end{rem}

\begin{dfn} \label{dfn:bitorsors.3}
Let $\gerbe{G}$ be a gerbe on $X$, and let 
$\bsym{U} = \{ U_k \}_{k \in K}$ be an open covering of $X$.  
A {\em trivialization datum} of $\gerbe{G}$ on $\bsym{U}$ is a collection 
$\bsym{s} = \bigl( \{ i_{k_0} \} , \{ s_{k_0, k_1} \} \bigr)$,
where $i_{k_0} \in \opn{Ob}(\gerbe{G}(U_{k_0}))$ and 
$s_{k_0, k_1} \in \gerbe{G}(U_{k_0, k_1})(i_{k_0}, i_{k_1})$.
\end{dfn}

Consider a twisted object
$(\gerbe{G}, \gerbe{A}, \opn{cp})$
of the stack of crossed groupoids $\twocat{P}$, 
and a trivialization datum 
$\bsym{s} = \bigl( \{ i_{k_0} \} , \{ s_{k_0, k_1} \} \bigr)$ 
of the gauge gerbe $\gerbe{G}$ on the open covering $\bsym{U}$. Define 
\begin{equation}
\begin{aligned}
\mcal{A}_{k_0} & := \gerbe{A}(i_{k_0}) \in \opn{Ob}(\twocat{P}(U_{k_0})) , 
\\
g_{k_0, k_1} & := \gerbe{A}(s_{k_0, k_1}) \in 
\twocat{P}_1(U_{k_0, k_1})(i_{k_0}, i_{k_1}) , 
\\
a_{k_0, k_1, k_2} & := \opn{cp} ( s_{k_0, k_2}^{-1} \circ s_{k_1, k_2} \circ
s_{k_0, k_1} ) \in \Gamma(U_{k_0, k_1, k_2}, \opn{IG}(\mcal{A}_{k_0})) . 
\end{aligned}
\end{equation}
The resulting collection of elements, arranged as in (\ref{eqn:mdd.120}),
is denoted by $\opn{dec}(\gerbe{A}, \bsym{s})$.

\begin{lem}   \label{lem:bitorsors.2}
Let $(\gerbe{G}, \gerbe{A}, \opn{cp})$ be a twisted object of $\twocat{P}$,
and let $\bsym{s}$ be a trivialization datum for $\gerbe{G}$ on $\bsym{U}$.
\begin{enumerate}
\item The collection $\opn{dec}(\gerbe{A}, \bsym{s})$ is a geometric descent
datum for $(\bsym{U}, \twocat{P})$. 

\item Let $(\gerbe{G}', \gerbe{A}', \opn{cp}')$ be another another twisted
object of $\twocat{P}$, which is twisted gauge equivalent to 
$(\gerbe{G}, \gerbe{A}, \opn{cp})$, 
and let $\bsym{s}'$ be a trivialization datum for $\gerbe{G}'$ on $\bsym{U}$.
Assume that the groupoids 
$\gerbe{G}(U_k)$ are connected for all $k \in K$. Then 
the descent data $\opn{dec}(\gerbe{A}, \bsym{s})$ and 
$\opn{dec}(\gerbe{A}', \bsym{s}')$ are gauge equivalent.
\end{enumerate}
\end{lem}

\begin{proof}
(1) This is a simple calculation.

\medskip \noindent
(2) First assume that 
$(\gerbe{G}', \gerbe{A}', \opn{cp}') = 
(\gerbe{G}, \gerbe{A}, \opn{cp})$, 
but 
$\bsym{s}' = \bigl( \{ i'_{k_0} \} , \{ s'_{k_0, k_1} \} \bigr)$ 
is a different descent datum. Since the groupoids 
$\gerbe{G}(U_k)$ are connected, we can find elements 
$t_k \in \gerbe{G}(U_k)(i_k, i'_k)$. 
Define $f_k := \gerbe{A}(t_k)$, which is an isomorphism 
$\mcal{A}_k \to \mcal{A}'_k$ in $\twocat{P}_1$, 
and
\[ b_{k_0, k_1} := \opn{cp}( s_{k_0, k_1}^{-1}  \circ 
t_{k_1}^{-1}  \circ s'_{k_0, k_1}  \circ t_{k_0} ) \in 
\Gamma(U_{k_0, k_1}, \opn{IG}(\mcal{A}_{k_0})) . \]
A calculation shows that the collection $\bsym{f}$, with notation as in 
(\ref{eqn:bitorsors.14}), is a gauge transformation 
$\opn{dec}(\gerbe{A}, \bsym{s}) \to \opn{dec}(\gerbe{A}', \bsym{s}')$.

Now consider a twisted gauge transformation 
$(\Phi_{\mrm{gau}}, \Phi_{\mrm{rep}}) : 
(\gerbe{G}, \gerbe{A}, \opn{cp}) \to \lb (\gerbe{G}', \gerbe{A}', \opn{cp}')$.
Since $\Phi_{\mrm{gau}} : \gerbe{G} \to \gerbe{G}'$
is an equivalence of gerbes, it follows that the groupoids 
$\gerbe{G}'(U_k)$ are all connected. So, by the previous paragraph, we don't
care which trivialization datum we use for $\gerbe{G}'$.
Let us define $\bsym{s}'$ to be 
$i'_k := \Phi_{\mrm{gau}}(i_k)$ and 
$s'_{k_0, k_1} :=  \Phi_{\mrm{gau}}(s_{k_0, k_1})$. 
Take 
$f_k := \Phi_{\mrm{rep}} : \mcal{A}_{k} \to \mcal{A}'_{k}$
and $b_{k_0, k_1} := 1$. 
The resulting collection $\bsym{f}$ is a gauge transformation 
$\opn{dec}(\gerbe{A}, \bsym{s}) \to \opn{dec}(\gerbe{A}', \bsym{s}')$.
\end{proof}

\begin{dfn} \label{dfn:18}
Let $\gerbe{G}$ be a gerbe on a topological space $X$, and let 
$\bsym{U} = \{ U_k \}_{k \in K}$ be an open covering of $X$. We say
that $\bsym{U}$ {\em totally trivializes} $\gerbe{G}$ if 
for every $p \in \{ 0, 1 \}$ and $\bsym{k} \in \bsym{\Delta}_p(K)$ the 
groupoid $\gerbe{G}(U_{\bsym{k}})$ is nonempty and connected.
\end{dfn}

\begin{dfn} \label{dfn:bitorsors.10}
Let $\twocat{P}$ be a stack of crossed groupoids on a topological space $X$,
and let $\bsym{U}$ be an open covering of $X$. 
\begin{enumerate}
\item Let $(\gerbe{G}, \gerbe{A}, \opn{cp})$ be a twisted
object of $\twocat{P}$. We say that $\bsym{U}$ totally trivializes
$(\gerbe{G}, \gerbe{A}, \opn{cp})$ if it totally trivializes the gauge gerbe
$\gerbe{G}$.

\item We say that $\bsym{U}$ totally trivializes 
$\twocat{P}$ if it totally trivializes all twisted objects of $\twocat{P}$.
\end{enumerate}
\end{dfn}

\begin{rem}
In general there is no reason to expect that totally trivializing open
coverings should exist. If we were to work with hypercoverings,
then such a difficulty would not arise; but then the combinatorics would be
more complicated. Cf.\ \cite[Section 5]{Br}.

We shall see in Corollary \ref{cor:4}  that for
the stacks with inner gauge groups \lb
$\twocat{AssDef}(R,\mcal{O}_X)$ and $\twocat{PoisDef}(R, \mcal{O}_X)$
there do exist totally trivializing open coverings.
\end{rem}

\begin{prop} \label{prop:bitorsors.5}
Let $\bsym{U}$ be an open covering of $X$ that totally trivializes 
the stack of crossed groupoids $\twocat{P}$. Then  there is a function 
\[ \ol{\opn{dec}} :  \ol{\opn{TwOb}}(\twocat{P}) \to 
\ol{\opn{Desc}}(\opn{C}(\bsym{U}, \twocat{P})) \]
called {\em decomposition}, 
which is represented by $\opn{dec}(\gerbe{A}, \bsym{s})$
for any representative $\gerbe{A} \in \opn{TwOb}(\twocat{P})$ 
and any trivialization datum $\bsym{s}$ for $\gerbe{A}$ on $\bsym{U}$.
\end{prop}

\begin{proof}
Take any twisted object $(\gerbe{G}, \gerbe{A}, \opn{cp})$. 
Since the groupoids $\gerbe{G}(U_{k_0})$ are \lb nonempty, and
the groupoids $\gerbe{G}(U_{k_0, k_1})$ are connected, we can find 
a trivialization datum $\bsym{s}$. By Lemma \ref{lem:bitorsors.2}(1) we get 
a geometric descent datum $\opn{dec}(\gerbe{A}, \bsym{s})$.
Since the groupoids $\gerbe{G}(U_{k_0})$ are connected, 
by Lemma \ref{lem:bitorsors.2}(2) this gives a well-defined function on
equivalence classes.
\end{proof}

Recall that a refinement of  $\bsym{U}$ is an open covering 
$\bsym{U}' = \{ U'_k \}_{k \in K'}$ of $X$, together with a function
$\rho : K' \to K$, such that 
$U'_k \subset U_{\rho(k)}$ for every $k \in K'$.
Sometimes we say that $\rho : \bsym{U}' \to \bsym{U}$ is a
refinement. Given such a refinement, there is a function
\begin{equation} \label{eqn:23}
\rho^* : \opn{Desc}(\opn{C}(\bsym{U}, \twocat{P}))
\to \opn{Desc}(\opn{C}(\bsym{U}', \twocat{P})) . 
\end{equation}
The formula is obvious: say 
$\bsym{d} = \bigl( \{ \mcal{A}_k \}_{k \in K}, \ldots \bigr)$;
then 
$\rho^*(\bsym{d}) = \bigl( \{ \mcal{A}'_k \}_{k \in K'}, \ldots
\bigr)$, 
where $\mcal{A}'_{k} := \mcal{A}_{\rho(k)}|_{U'_k}$, etc. 
It is easy to see that this function preserves the equivalence
relation.
Thus $\ol{\opn{Desc}}(\opn{C}(\bsym{U}, \twocat{P}))$ is
contravariant with respect
to refinements $\bsym{U}' \to \bsym{U}$.

\begin{prop} \label{prop:bitorsors.6}
Let $\bsym{U}$ and $\bsym{U}'$ be open coverings of $X$ that totally
trivialize the stack of crossed groupoids $\twocat{P}$, and let 
$\rho : \bsym{U} \to \bsym{U}'$ be a refinement. 
Then the diagram 
\[ \UseTips \xymatrix @C=9ex @R=5ex {
\ol{\opn{TwOb}}(\twocat{P})
\ar[r]^(0.4){\ol{\opn{dec}}}
\ar[dr]_{\ol{\opn{dec}}}
&
\ol{\opn{Desc}}(\opn{C}(\bsym{U}, \twocat{P}))
\ar[d]^{\rho^*} 
\\
&
\ol{\opn{Desc}}(\opn{C}(\bsym{U}', \twocat{P})) 
} \]
is commutative.
\end{prop}

\begin{proof}
This is immediate from the construction; cf.\ proof of Lemma
\ref{lem:bitorsors.2}(2).
\end{proof}

In the rest of this section we will show how to glue twisted objects from
descent data. This construction will be inverse to the decomposition operation.
But before attempting to glue twisted objects, we have to
understand how to pass from descent data to connected groupoids. 

Let $K$ be some nonempty set. By {\em groupoid descent datum} indexed by $K$
we mean a collection 
$\bsym{d} = \bigl( \{ G_{k_0} \}, \{ \phi_{k_0, k_1} \},
\{ a_{k_0, k_1, k_2} \} \bigr)$, 
where $k_0, k_1, k_2 \in K$, $G_{k_0}$ is a group,
$\phi_{k_0, k_1} : G_{k_0} \to G_{k_1}$ is a group isomorphism, and
$a_{k_0, k_1, k_2} \in G_{k_0}$.
These satisfy conditions  (\ref{eqn:bitorsors.11})
and (\ref{eqn:bitorsors.12}), with the obvious modifications in notation (e.g.\
$\phi_{k_0, k_1}$ instead of $g_{k_0, k_1}$). 

Suppose $\bsym{d}$ and $\bsym{d}'$ are groupoid descent data indexed by $K$. 
A gauge transformation $\bsym{f} : \bsym{d} \to \bsym{d}'$ is a 
collection 
$\bsym{f} = \bigl( \{ \psi_{k_0} \}, \{ b_{k_0, k_1} \} \bigr)$
of group isomorphisms $\psi_{k_0} : G_{k_0} \to G'_{k_1}$, and of
elements $b_{k_0, k_1} \in G_{k_0}$. The conditions are the translations of
formulas (\ref{eqn:bitorsors.6}) and (\ref{eqn:bitorsors.7}) to the present
context.

A trivialization datum for a connected groupoid $G$ is a collection 
$\bsym{s} = \{  s_{k_0, k_1} \}$ of elements 
$s_{k_0, k_1} \in G(k_0, k_1)$,
indexed by the set $K := \opn{Ob}(G)$.
We call the pair $(G, \bsym{s})$ a trivialized groupoid. 
By isomorphism of trivialized groupoids 
$F : (G, \bsym{s}) \to (G', \bsym{s}')$
we mean an isomorphism of groupoids $F :G \to G'$
such that $F(s_{k_0, k_1}) = s'_{F(k_0), F(k_1)}$.
Just like in Lemma \ref{lem:bitorsors.2}(1), to any trivialized groupoid 
$(G, \bsym{s})$ there is a corresponding groupoid descent datum 
$\opn{dec}(G, \bsym{s})$.

\begin{lem} \label{lem:bitorsors.3}
Let $\bsym{d}$ be a groupoid descent datum indexed by a nonempty set $K$. 
\begin{enumerate}
\item There is a connected groupoid $G$, with 
trivialization datum $\bsym{s}$, such that 
$\opn{dec}(G, \bsym{s}) = \bsym{d}$.
The trivialized groupoid $(G, \bsym{s})$ is unique up to a unique
isomorphism. We write $\opn{glu}(\bsym{d}) := G$.

\item Let $\bsym{d}'$ be another groupoid descent datum indexed by
$K$, and let $\bsym{f} : \bsym{d} \to \bsym{d}'$ be a gauge
transformation. We describe $\bsym{f}$ as in \tup{(\ref{eqn:bitorsors.14})}.
Then there is a unique isomorphism of groupoids 
$\opn{glu}(\bsym{f}) : \opn{glu}(\bsym{d}) \to \opn{glu}(\bsym{d}')$ 
such that 
$\opn{glu}(\bsym{f})(s_{k_0, k_1} \cdot b_{k_0, k_1}) = s'_{k_0, k_1}$.
\end{enumerate}
\end{lem}

\begin{proof}
(1) This seems to be known (cf.\ \cite[Appendix A1]{Ko2}), but here is a sketch
anyhow. Write $\bsym{d}$ as in (\ref{eqn:mdd.120}).
For any $k_0, k_1 \in K$ we take $G(k_0, k_1)$ to be the 
$G_{k_1}$-$G_{k_0}$-bitorsor with basis 
$s_{k_0, k_1}$, such that 
$s_{k_0, k_1} \cdot a = \phi_{k_0, k_1}(a) \cdot s_{k_0, k_1}$
for any $a \in G_{k_0}$.
The composition $\circ$ in $G$ is specified by its value on the basis
elements:
\[ s_{k_1, k_2} \circ s_{k_0, k_1} := s_{k_0, k_2} \cdot a_{k_0, k_2, k_2} . \]

\medskip \noindent
(2) A direct calculation.
\end{proof}

\begin{lem}  \label{lem:bitorsors.9}
There is a function 
\[ \opn{glu} : \opn{Desc}(\opn{C}(\bsym{U}, \twocat{P})) \to 
\opn{TwOb}(\twocat{P}) \]
having these properties\tup{:}
\begin{enumerate}
\rmitem{i} If 
$\bsym{d}, \bsym{d}' \in \opn{Desc}(\opn{C}(\bsym{U}, \twocat{P}))$
are gauge equivalent, then $\opn{glu}(\bsym{d})$ and 
$\opn{glu}(\bsym{d}')$ are twisted gauge equivalent.

\rmitem{ii} For any $\bsym{d} \in \opn{Desc}(\opn{C}(\bsym{U}, \twocat{P}))$
the twisted object 
$(\gerbe{G}, \gerbe{A}, \opn{cp}) := \opn{glu}(\bsym{d})$ has a 
trivialization datum $\bsym{s}$ such that 
$\opn{dec}(\gerbe{A}, \bsym{s})= \bsym{d}$.

\rmitem{iii} Let $(\gerbe{G}, \gerbe{A}, \opn{cp})$
be a twisted object of $\twocat{P}$, with trivialization datum
$\bsym{s}$, and let $\bsym{d} := \opn{dec}(\gerbe{A}, \bsym{s})$.
Then $\opn{glu}(\bsym{d})$ is twisted gauge equivalent to 
$(\gerbe{G}, \gerbe{A}, \opn{cp})$.
\end{enumerate}
\end{lem}

\begin{proof}
Take any $\bsym{d} \in \opn{Desc}(\opn{C}(\bsym{U}, \twocat{P}))$, which we
describe as in (\ref{eqn:mdd.120}).
For a nonempty open set $V \subset X$ let 
$K(V) := \{ k \in K \mid V \subset U_k \}$. 
Consider the collection 
\[ \opn{IG}(\bsym{d}, V) := 
\bigl( \{ \Gamma(V, \opn{IG}(\mcal{A}_{k_0})) \}, 
\{ \opn{IG}(g_{k_0, k_1}) \}, \{ a_{k_0, k_1, k_2} \} \bigr) \]
in which $k_0, k_1, k_2 \in K(V)$. So $\opn{IG}(\bsym{d}, V)$ is a 
groupoid descent datum indexed by the set $K(V)$. 
If $K(V) \neq \emptyset$, then by Lemma \ref{lem:bitorsors.3}(1) there is a
nonempty connected groupoid 
$\gerbe{G}(V) := \opn{glu}(\opn{IG}(\bsym{d}, V))$.
If If $K(V) = \emptyset$ we let $\gerbe{G}(V)$ be the empty groupoid.
For $V = U_k$ there is a distinguished object $k \in \opn{Ob}(\gerbe{G}(U_k))$, 
and for $V = U_{k_0, k_1}$ there is a distinguished isomorphism 
$s_{k_0, k_1} : k_0 \to k_1$ in the groupoid $\gerbe{G}(U_{k_0, k_1})$.
Also there is a tautological group isomorphism 
$\opn{cp} : \gerbe{G}(V)(k_0, k_0) \to 
\Gamma(V, \opn{IG}(\mcal{A}_{k_0}))$
for  $k_0 \in K(V)$.

If $V' \subset V$ then $K(V) \subset K(V')$. 
By construction of the groupoids $\opn{glu}(-)$ there is an induced morphism
of groupoids $\gerbe{G}(V) \to \gerbe{G}(V')$, 
which on the automorphism groups of $k_0 \in K(V)$ it is the restriction
homomorphism
$\Gamma(V, \opn{IG}(\mcal{A}_{k_0})) \to \Gamma(V', \opn{IG}(\mcal{A}_{k_0}))$.
In this way we obtain a presheaf of groupoids $\gerbe{G}$ on $X$, which is a
separated prestack (the isomorphism presheaves are sheaves).
The group isomorphisms $\opn{cp}$ above induce an isomorphism of sheaves of
groups 
$\opn{cp} : \gerbe{G}(k_0, k_0) \to 
\opn{IG}(\mcal{A}_{k_0})$ on $U_{k_0}$.

Let $\til{\gerbe{G}}$ be the stackification of $\gerbe{G}$; so
$\til{\gerbe{G}}$ is a gerbe on $X$, and there is a weak equivalence of
prestacks $F : \gerbe{G} \to \til{\gerbe{G}}$. Note that $\til{\gerbe{G}}$
is equipped with a trivialization datum
$\bsym{s} = \bigl( \{ i_{k_0} \} , \{ s_{k_0, k_1} \} \bigr)$
on $\bsym{U}$,
where $i_{k_0} := F(k_0) \in \opn{Ob}(\til{\gerbe{G}}(U_{k_0}))$.

For any $V \subset X$ open there is a morphism of groupoids 
$\gerbe{A}(V) : \gerbe{G}(V) \to \twocat{P}_1(V)$,
namely $\gerbe{A}(V)(k_0) := \mcal{A}_{k_0}$ for 
$k_0 \in K(V) = \opn{Ob}(\gerbe{G}(V))$,
and
$\gerbe{A}(V)(s_{k_0, k_1} \cdot a) := 
g_{k_0, k_1} \circ \opn{ig}(a)$
for $a \in \gerbe{G}(V)(k_0, k_0) = \Gamma(V, \opn{IG}(\mcal{A}_{k_0}))$.
Note that $\opn{ig}(a)$ is an automorphism of $\mcal{A}_{k_0}|_V$,
and $g_{k_0, k_1} : \mcal{A}_{k_0}|_V \to \mcal{A}_{k_1}|_V$
is an isomorphism, all in the stack of groupoids $\twocat{P}_1$.
It is a direct calculation to check that $\gerbe{A}(V)$ respects compositions,
so it is indeed a morphism of groupoids. 
As $V$ varies we get a morphism of prestacks 
$\gerbe{A} : \gerbe{G} \to \twocat{P}_1$. 
This induces a morphism of stacks 
$\til{\gerbe{A}\, \, } \! \! : \til{\gerbe{G}} \to \twocat{P}_1$.

Take a local object $i$ of $\til{\gerbe{G}}$ which is not a local object of 
$\gerbe{G}$. This new local object $i$ is created by gluing a collection of
local objects of $\gerbe{G}$ using local isomorphisms in $\gerbe{G}$. 
The same gluing operation also 
determines an isomorphism of sheaves of groups 
$\til{\opn{cp}} : \til{\gerbe{G}}(i, i) \to 
\opn{IG}( \til{\gerbe{A}\, \, } \!  (i))$. 
We thus get a twisted object
$\opn{glu}(\bsym{d}) = 
(\til{\gerbe{G}}, \til{\gerbe{A}\, \, } \! ,  \til{\opn{cp}})$.
By construction we see that the descent datum
$\opn{dec}(\opn{glu}(\bsym{d}), \bsym{s})$ equals $\bsym{d}$, so property (ii)
holds.

Suppose $\bsym{d}'$ is another normalized descent datum, and 
$\bsym{f} : \bsym{d} \to \bsym{d}'$ is a gauge transformation, described like
in (\ref{eqn:bitorsors.14}). Let's write 
$\opn{glu}(\bsym{d}') = (\til{\gerbe{G}}', {\til{\gerbe{A}\, \, } \! }', 
\til{\opn{cp}}')$.
Lemma \ref{lem:bitorsors.3}(2) implies that for every open set $V$ there's an
induced isomorphism of group\-oids 
$\gerbe{G}(V) \to \gerbe{G}'(V)$. Hence we get an equivalence of prestacks of
groupoids 
$\Phi_{\mrm{gau}} : \gerbe{G} \to \gerbe{G}'$.
Next there is an isomorphism 
$\Phi_{\mrm{rep}} : \gerbe{A} \to \gerbe{A}' \circ \Phi_{\mrm{gau}}$
of functors of prestacks $\gerbe{G} \to \gerbe{G}'$, which for an index
$k_0 \in K$ is
$\Phi_{\mrm{rep}} := f_{k_0} : \mcal{A}_{k_0} \to \mcal{A}'_{k_0}$. 
The stackification extends these to an equivalence of gerbes  
$\Phi_{\mrm{gau}} : \til{\gerbe{G}} \to \til{\gerbe{G}}'$
and an isomorphism of functors of stacks
$\Phi_{\mrm{rep}} : \til{\gerbe{A}\, \, } \! \! \to 
{\til{\gerbe{A}\, \, } \! \! }' \circ \Phi_{\mrm{gau}}$.
We obtain a twisted gauge equivalence 
$(\Phi_{\mrm{gau}}, \Phi_{\mrm{rep}}) : \opn{glu}(\bsym{d}) \to
\opn{glu}(\bsym{d}')$
between these twisted objects; so property (i) holds.
 
Finally take any twisted object $(\gerbe{G}, \gerbe{A}, \opn{cp})$ of 
$\twocat{P}$ and a trivialization datum $\bsym{s}$.
The step-by-step construction of 
$\opn{glu}(\bsym{d}) = 
(\til{\gerbe{G}}, \til{\gerbe{A}\, \, } \! , \til{\opn{cp}})$
shows that it is twisted gauge equivalent to  
$(\gerbe{G}, \gerbe{A}, \opn{cp})$. We see that property (iii) holds.
\end{proof}

\begin{thm}[Decomposition] \label{thm:glu.1}
Let $X$ be a topological space, $\bsym{U}$ an open covering of $X$,
and $\twocat{P}$ a stack of crossed groupoids on $X$. 
Assume that $\bsym{U}$ totally trivializes $\twocat{P}$. Then the
function 
\[ \ol{\opn{dec}} :  \ol{\opn{TwOb}}(\twocat{P}) \to 
\ol{\opn{Desc}}(\opn{C}(\bsym{U}, \twocat{P}))  \]
from Proposition \tup{\ref{prop:bitorsors.5}} is bijective.
\end{thm}

\begin{proof}
By property (i) Lemma \ref{lem:bitorsors.9} there is a
well-defined function 
\[ \ol{\opn{glu}} :  \ol{\opn{Desc}}(\opn{C}(\bsym{U}, \twocat{P})) \to 
\ol{\opn{TwOb}}(\twocat{P}) . \]
Properties (ii-iii) of the lemma say that $\ol{\opn{glu}}$ is the 
inverse of $\ol{\opn{dec}}$.
\end{proof}

We end this section with a simple example.

\begin{exa} \label{exa:19}
Let $\mcal{A}_0 \in \opn{Ob}(\twocat{P}(X))$. Consider the open covering
$\bsym{U} = \{ U_0 \}$, with $U_0 := X$. 
Take $g_{0,0} := \bsym{1}$ and $a_{0,0,0} := 1$.
Then 
$\bigl( \{ \mcal{A}_0 \}, \{ g_{0,0} \}, \{ a_{0,0,0} \} \bigr)$
is a geometric descent datum. {}From Theorem \ref{thm:glu.1} we get a twisted
object $\gerbe{A}$,
and  $\mcal{A}_0$ is a global object of $\gerbe{A}$. We refer to $\gerbe{A}$
as the twisted object generated by $\mcal{A}_0$.
Conversely, any twisted object $\gerbe{A}$ which is not really twisted
arises in this way (up to twisted gauge equivalence). 
\end{exa}

\section{Total Trivialization of Twisted Deformations}
\label{sec:tttf}

In this section we assume Setup \ref{setup:200}. 
Acyclic open sets were introduced in Definition \ref{dfn:23}.

Let $\mcal{A}$ be an $R$-deformation of $\mcal{O}_U$
(associative of Poisson) , for some open set 
$U \subset X$. By \cite[Corollary 3.8]{Ye9}, for every $p \geq 1$ 
the sheaf of $R$-modules $\m^p \mcal{A}$ is $\m$-adically complete.
As explained in Section \ref{sec:defs-alg-vars}, there is a sheaf of
pronilpotent groups $\opn{Exp}(\m^p \mcal{A})$ on $U$, with an isomorphism of
sheaves of sets 
$\exp : \m^p \mcal{A} \iso \opn{Exp}(\m^p \mcal{A})$.
If $V \subset U$ is an $\mcal{O}_X$-acyclic open set, then by 
\cite[Proposition 4.5]{Ye9} the algebra 
$A := \Gamma(V, \mcal{A})$ is an $R$-deformation of 
$C := \Gamma(V, \mcal{O}_X)$.

\begin{lem} \label{lem:mdd.101}
Let $U \subset X$ be an open set, and let
$\mcal{A}$ be an  $R$-deformation of $\mcal{O}_U$. Write
$\mcal{G} := \opn{IG}(\mcal{A}) = \opn{Exp} (\m \mcal{A})$,
and $\mcal{N}_p := \opn{Exp} (\m^{p} \mcal{A})$ for $p \geq 1$.
\begin{enumerate}
\item $\mcal{N}_p$ is a sheaf of normal subgroups of $\mcal{G}$, and
$\mcal{N}_p / \mcal{N}_{p+1}$ is central in $\mcal{G} / \mcal{N}_{p+1}$.
The canonical homomorphism 
$\mcal{G} \to \lim_{\leftarrow p}\, \mcal{G} / \mcal{N}_p$
is an isomorphism.

\item Suppose $V \subset U$ is an $\mcal{O}_X$-acyclic open set. 
Let $A := \Gamma(V, \mcal{A})$, and let
$N_p := \opn{Exp}(\m^{p} A)$. Then the cohomology groups 
$\mrm{H}^i(V, \mcal{N}_p / \mcal{N}_{p+1})$ are
trivial for all $p, i \geq 1$, and the canonical group homomorphisms 
$N_p \to \Gamma(V, \mcal{N}_p)$
and
$N_p / N_{q} \to \Gamma(V, \mcal{N}_p / \mcal{N}_{q})$
are bijective for all $q \geq p \geq 1$.
\end{enumerate}
\end{lem}

\begin{proof}
(1) We know that the Lie bracket satisfies 
$[\m \mcal{A}, \m^{p} \mcal{A}] \subset \m^{p+1} \mcal{A}$.
Hence $[ \mcal{G} , \mcal{N}_{p} ] \subset \mcal{N}_{p+1}$, 
so $\mcal{N}_{p}$ is normal in $\mcal{G}$ and 
$\mcal{N}_p / \mcal{N}_{p+1}$ is central in $\mcal{G} / \mcal{N}_{p+1}$.
Now $\mcal{G} / \mcal{N}_{p} \cong \opn{Exp}(R_{p-1} \ot_R \m \mcal{A})$,
and by definition 
$\opn{Exp} (\m \mcal{A}) = \lim_{\leftarrow p}\,
\opn{Exp}(R_{p} \ot_R \m \mcal{A})$.

\medskip \noindent
(2) Since $\opn{exp} : \m^{p} \mcal{A} \to \mcal{N}_p$
and
$\opn{exp} : \m^{p} \mcal{A} / \m^{q} \mcal{A} \to 
\mcal{N}_p / \mcal{N}_q$
are isomorphisms of sheaves of sets, and  
$\opn{exp} : \m^{p}  \mcal{A} / \m^{p+1} \mcal{A} \to 
\mcal{N}_p / \mcal{N}_{p+1}$
is an isomorphism of sheaves of abelian groups, we are allowed to switch 
from groups to Lie algebras. Namely, it suffices to prove the assertions 
for $\mcal{N}_p := \m^{p} \mcal{A}$ and $N_p := \m^{p} A$.

For the vanishing of cohomology we use the isomorphism
of sheaves of abelian groups
$\mcal{N}_p / \mcal{N}_{p+1} \cong 
(\m^{p} / \m^{p+1}) \ot \mcal{O}_U$
induced by the augmentation $\AA \to \OO_U$ 
(cf.\ \cite[Proposition 5.3]{Ye5}) to deduce
\[ \mrm{H}^i(V, \mcal{N}_p / \mcal{N}_{p+1}) \cong
(\m^{p} / \m^{p+1}) \ot \mrm{H}^i(V, \mcal{O}_U) = 0 \]
for $i \geq 1$.
The claim that the functions
$N_p \to \Gamma(V, \mcal{N}_p)$
and
$N_p / N_{q} \to \Gamma(V, \mcal{N}_p / \mcal{N}_{q})$
are bijective is  \cite[Corollary 3.6]{Ye9}, for $R$ and
$R_{q-1}$ respectively. 
\end{proof}

\begin{thm}[Total Trivialization] \label{thm:6}
Assume Setup \tup{\ref{setup:200}}. We consider the two cases\tup{:}
\begin{enumerate}
\item[$\vartriangleright$] The associative case, in which 
$\twocat{P}(R, X)$ is the stack  of crossed groupoids 
$\twocat{AssDef}(R, \mcal{O}_X)$ on $X$.
\item[$\vartriangleright$] The Poisson case, in which 
$\twocat{P}(R, X)$ is the stack  of crossed groupoids \lb
$\twocat{PoisDef}(R, \mcal{O}_X)$ on $X$. 
\end{enumerate}
Let 
$(\gerbe{G}, \gerbe{A}, \opn{cp})$ be a twisted object in 
$\twocat{P}(R, X)$, and let $U$ be an open set of $X$.
\begin{enumerate}
\item If $\mrm{H}^2(U, \mcal{O}_X) = 0$ then the groupoid
$\gerbe{G}(U)$ is nonempty.
\item If $\mrm{H}^1(U, \mcal{O}_X) = 0$ then the groupoid
$\gerbe{G}(U)$ is connected.
\end{enumerate}
\end{thm}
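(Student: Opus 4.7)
The strategy is to exploit the pronilpotent structure of the gauge gerbe $\gerbe{G}$ and apply the abelian gerbe obstruction theory from \cite{Ye4}. By Proposition \ref{prop:28}, for any local object $i \in \opn{Ob}\gerbe{G}(V)$, the automorphism sheaf $\opn{Aut}_{\gerbe{G}}(i) \cong \opn{IG}(\gerbe{A}(i)) \cong \opn{exp}(\m\, \gerbe{A}(i))$ is a pronilpotent sheaf of groups, with a canonical filtration by normal subsheaves $\mcal{N}_p := \opn{exp}(\m^{p+1}\gerbe{A}(i))$, having abelian successive quotients
\[ \mcal{L}_p := \mcal{N}_{p-1}/\mcal{N}_{p} \cong (\m^p/\m^{p+1}) \otimes_\K \mcal{O}_V . \]
Since $\m^p/\m^{p+1}$ is a finite-dimensional $\K$-vector space, we get $\mrm{H}^i(U, \mcal{L}_p) \cong (\m^p/\m^{p+1}) \otimes_\K \mrm{H}^i(U, \mcal{O}_X)$, so the hypotheses of (1) and (2) force these cohomology groups to vanish for $i = 2$, respectively $i = 1$, and for all $p \geq 1$.

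Next I would organize $\gerbe{G}$ as an inverse limit of truncations. Applying Proposition \ref{prop:25} to the quotient homomorphisms $R \twoheadrightarrow R_p$ gives induced twisted $R_p$-deformations $\gerbe{A}^{(p)}$ with gauge gerbes $\gerbe{G}^{(p)}$ on $X$, forming a tower whose inverse limit recovers $\gerbe{G}$ (this uses Proposition \ref{prop:16} and the acyclicity hypothesis of Setup \ref{setup:7}). The base $\gerbe{G}^{(0)}$ is the gauge gerbe of a twisted $\K$-deformation; since the only $\K$-deformation of $\mcal{O}_X$ is $\mcal{O}_X$ itself and its inner gauge group is trivial, $\gerbe{G}^{(0)}$ has trivial automorphism sheaves and, being a gerbe, is globally equivalent to the trivial gerbe $\twocat{pt}$. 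For $p \geq 1$, the surjection $\gerbe{G}^{(p)} \to \gerbe{G}^{(p-1)}$ is a central extension of gerbes with abelian kernel $\mcal{L}_p$, because the relative automorphism sheaves are $\ker(\mcal{N}_0/\mcal{N}_p \to \mcal{N}_0/\mcal{N}_{p-1}) = \mcal{N}_{p-1}/\mcal{N}_p \cong \mcal{L}_p$, which is central by Proposition \ref{prop:28}.

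The standard obstruction theory for central extensions of gerbes by an abelian sheaf $\mcal{L}_p$ (see \cite{Ye4}) then supplies, for each $p \geq 1$, the following data: the obstruction to lifting an object $i_{p-1} \in \opn{Ob}\gerbe{G}^{(p-1)}(U)$ to an object of $\gerbe{G}^{(p)}(U)$ lies in $\mrm{H}^2(U, \mcal{L}_p)$; and, given $i_p, j_p \in \opn{Ob}\gerbe{G}^{(p)}(U)$ together with an isomorphism between their images in $\gerbe{G}^{(p-1)}(U)$, the obstruction to lifting the isomorphism lies in $\mrm{H}^1(U, \mcal{L}_p)$. For part (1), starting from the canonical object of $\gerbe{G}^{(0)}(U) \simeq \twocat{pt}(U)$, the vanishing of all $\mrm{H}^2(U, \mcal{L}_p)$ allows us to lift inductively to a compatible system of objects $i_p \in \opn{Ob}\gerbe{G}^{(p)}(U)$, whose inverse limit is an object of $\gerbe{G}(U) \simeq \lim_{\leftarrow p}\gerbe{G}^{(p)}(U)$. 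For part (2), given $i, j \in \opn{Ob}\gerbe{G}(U)$, their images in $\gerbe{G}^{(0)}(U)$ are canonically isomorphic, and the vanishing of all $\mrm{H}^1(U, \mcal{L}_p)$ allows us to lift this isomorphism inductively through the tower, producing an isomorphism $i \iso j$ in the limit.

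\textbf{Main obstacle.} The delicate point is justifying the inverse limit step: that a compatible system of local objects (resp.\ isomorphisms) in the $\gerbe{G}^{(p)}(U)$ actually assembles into a global object (resp.\ isomorphism) of $\gerbe{G}(U)$. This requires the inverse limit of the tower of gerbes $\{\gerbe{G}^{(p)}\}$ to behave well on the open set $U$, which in turn uses the Mittag-Leffler type vanishing supplied by the cohomological hypothesis together with the acyclicity framework of Proposition \ref{prop:16} and Proposition \ref{prop:28}. The other technical point that must be handled with care is the identification $\gerbe{G}^{(0)} \simeq \twocat{pt}$, which implicitly uses that a gerbe with trivial automorphism sheaves over a space where gerbes always admit local objects is globally trivial.
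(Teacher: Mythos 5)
Your proposal is correct and follows the same underlying strategy as the paper: filter the gauge gerbe by the $\m$-adic order of the inner gauge groups, observe that the successive quotients give central extensions of gerbes with abelian kernel $(\m^{p}/\m^{p+1})\otimes_{\K}\mcal{O}_X$, and run the obstruction theory of \cite{Ye4} with obstructions in $\mrm{H}^2$ (for objects) and $\mrm{H}^1$ (for isomorphisms). The packaging differs in one respect. You build the tower \emph{externally}, as the gauge gerbes of the induced twisted $R_p$-deformations via Proposition \ref{prop:25}; the paper instead works \emph{internally} with the quotient gerbes $\gerbe{G}/\gerbe{N}_{p}$ of the single gerbe $\gerbe{G}$, verifies that $\{\gerbe{N}_p\}$ is a normal, complete, nilpotent filtration with enough acyclic coverings (this is exactly where Proposition \ref{prop:28} enters), and then invokes \cite[Theorem 6.10]{Ye4} as a single package. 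The point of the internal route is precisely that it dissolves the two issues you flag as your ``main obstacle'': completeness of $\gerbe{G}$ with respect to $\{\gerbe{N}_p\}$ (i.e.\ the inverse-limit step, which is part of the hypotheses/conclusions of \cite[Theorem 6.10]{Ye4} rather than something to be re-proved), and it avoids having to identify $\gerbe{G}^{(p)}$ with $\gerbe{G}/\gerbe{N}_{p}$ across the stackification in Proposition \ref{prop:25}. So your argument is sound, but to close it cleanly you should either cite \cite[Theorem 6.10]{Ye4} directly for the limit step, or check that the stackified gerbes $\gerbe{G}^{(p)}$ agree with the quotients $\gerbe{G}/\gerbe{N}_{p}$ so that the completeness of $\opn{exp}(\m\mcal{A}_i)$ transfers; the base case $\gerbe{G}^{(0)}\simeq\twocat{pt}$ and the identification of the kernels $\mcal{L}_p$ are handled correctly.
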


\begin{proof}
Let $i$ be a local object of the gauge gerbe $\gerbe{G}$, defined on
some open set $U \subset X$.
We write $\mcal{A}_i := \gerbe{A}(i) \in \twocat{P}(R, X)(U)$, which is
an $R$-deformation of $\mcal{O}_{U}$.
There is an isomorphism of sheaves of groups 
$\opn{cp}: \gerbe{G}(i) \iso \mrm{IG}(\mcal{A}_i)$ on $U$.
By definition 
$\mrm{IG}(\mcal{A}_i) = \opn{Exp}(\m \mcal{A}_i)$, and 
hence for any $p \geq 1$ we get a sheaf of normal subgroups
\[ \gerbe{N}_p(i) := \opn{cp}^{-1}
\bigl( \opn{Exp}(\m^{p} \mcal{A}_i) \bigr) \subset \gerbe{G}(i) . \]
By Lemma \ref{lem:mdd.101}(1) the sequence $\{ \gerbe{N}_p(i) \}_{p \geq 1}$
is a central filtration of the sheaf of groups $\gerbe{G}(i)$, in the sense
of \cite[Definition 6.1]{Ye4}
(except that here the filtration starts at $p = 1$,
whereas in \cite{Ye4} it starts at $p = 0$). Also  
$\gerbe{G}(i) \cong 
\lim_{\leftarrow p}\, \gerbe{G}(i) / \gerbe{N}_p(i)$.

Next suppose $j$ is another object of $\gerbe{G}(U)$, and
$g \in \gerbe{G}(U)(i, j)$. Since 
$\gerbe{A}(g) : \mcal{A}_i \to \mcal{A}_j$ 
is an $R$-linear sheaf isomorphism, and since 
$\opn{cp} : \opn{Aut}_{\gerbe{G}} \twoiso \opn{IG} \circ \, \gerbe{A}$
is a natural isomorphism of functors, it follows that 
$\opn{Ad}(g) \bigl( \gerbe{N}_p(i) \bigr) = 
\gerbe{N}_p(j)$. 
This says that for fixed $p$, the collection 
$\{ \gerbe{N}_p(i) \}$
is a normal subprestack of groupoids of the gerbe $\gerbe{G}$, in the
sense of \cite[Definition 3.6]{Ye4}.
Moreover, there is a central extension of gerbes
\begin{equation} \label{eqn:19}
1 \to \gerbe{N}_p / \gerbe{N}_{p+1} \to 
\gerbe{G} / \gerbe{N}_{p+1} \xar{F}
\gerbe{G} / \gerbe{N}_{p} \to 1 , 
\end{equation}
and an isomorphism
\[ \gerbe{N}_p / \gerbe{N}_{p+1} \cong 
(\m^{p} / \m^{p+1}) \ot \mcal{O}_X  \]
 of sheaves of abelian groups on $X$.
See \cite[Definition 3.10]{Ye4}.
As we let $p$ vary, we have a central filtration
$\{ \gerbe{N}_p \}_{p \geq 1}$ of the gerbe $\gerbe{G}$, and $\gerbe{G}$ is
complete with respect to this filtration, as in 
\cite[Definition 6.5]{Ye4}.

Let $U$ and $\mcal{A}_i$ be as above, and let $V \subset U$ be an
$\mcal{O}_X$-acyclic open set. 
According to Lemma \ref{lem:mdd.101}(2), the open set $V$ is acyclic with
respect to the central filtration 
$\{ \gerbe{N}_p(i) \}_{p \geq 1}$
of the sheaf of groups $\gerbe{G}(i)$, in the sense of
\cite[Definition 6.2]{Ye4}.

Now take an open covering
$\bsym{U} = \{ U_k \}_{k \in K}$ of $X$ such that 
$\opn{Ob} (\gerbe{G}(U_k)) \neq \emptyset$
for every $k \in K$. This is possible because $\gerbe{G}$ is locally nonempty.
Since $X$ has enough $\mcal{O}_X$-acyclic open sets, we can find an open
covering $\bsym{U}' = \{ U'_k \}_{k \in K'}$ which refines $\bsym{U}$, and such
that each finite intersection
$U'_{k_0, \ldots, k_m}$ is $\mcal{O}_X$-acyclic. Note that
$\opn{Ob} (\gerbe{G}(U'_{k_0, \ldots, k_m})) \neq \emptyset$.
The discussion in the previous paragraph (for 
$V := U'_{k_0, \ldots, k_m}$) tells us that the covering $\bsym{U}'$ is
acyclic with respect to the central filtration 
$\{ \gerbe{N}_p \}_{p \geq 1}$ of the gerbe $\gerbe{G}$.
We conclude that there are enough acyclic coverings with respect to 
$\{ \gerbe{N}_p \}_{p \geq 1}$, in the sense of \cite[Definition 6.9]{Ye4}.

Finally let $U$ be an open set of $X$. Then
\[ \mrm{H}^q(U, \gerbe{N}_p / \gerbe{N}_{p+1}) \cong
(\m^{p} / \m^{p+1}) \ot \mrm{H}^q(U, \mcal{O}_X) . \]
According to \cite[Theorem 6.10]{Ye4}, if 
$\mrm{H}^2(U, \gerbe{N}_p / \gerbe{N}_{p+1})$
is trivial for all $p \geq 1$, then then the groupoid
$\gerbe{G}(U)$ is nonempty; and if 
$\mrm{H}^1(U, \gerbe{N}_p / \gerbe{N}_{p+1})$
is trivial for all $p \geq 1$, then then the groupoid
$\gerbe{G}(U)$ is connected.
\end{proof}

Here is an immediate consequence of Theorem \tup{\ref{thm:6}}:

\begin{cor} \label{cor:4}
In the situation of Setup \tup{\ref{setup:200}}, and with 
$\twocat{P}(R, X)$ as in Theorem \tup{\ref{thm:6}}, suppose that
$\bsym{U}$ is an $\mcal{O}_X$-acyclic open covering of $X$. Then
$\bsym{U}$ totally trivializes the stack of crossed groupoids 
$\twocat{P}(R, X)$.
\end{cor}

For a crossed groupoid $\cat{G}$ we denote by 
$\ol{\opn{Ob}}(\cat{G})$ the set of $1$-isomorphism classes of objects.

\begin{cor} \label{cor:7}
In the situation of Theorem \tup{\ref{thm:6}}, suppose that
$\mrm{H}^2(X, \mcal{O}_X) = \lb \mrm{H}^1(X, \mcal{O}_X) = 0$.
Then the function
\begin{equation} \label{eqn:40}
\ol{\opn{Ob}} \bigl( \twocat{P}(R, X)(X) \bigr) \to
\ol{\opn{TwOb}} \bigl( \twocat{P}(R, X) \bigr)
\end{equation}
constructed in Example \tup{\ref{exa:19}} is a bijection.
\end{cor}

\begin{proof}
Let $(\gerbe{G}, \gerbe{A}, \opn{cp})$ be a twisted object of 
$\twocat{P}(R, X)$. By Theorem \ref{thm:6}(1) there exists an object
$i \in \opn{Ob} \bigl( \gerbe{G}(X) \bigr)$.
Let 
$\mcal{A}_i := \gerbe{A}(i) \in \opn{Ob} \bigl( \twocat{P}(R, X)(X) \bigr).$
Then $\gerbe{A}$ is twisted gauge
equivalent to the twisted object generated by $\mcal{A}_i$.
We see that the function (\ref{eqn:40}) is surjective. 

Now suppose that 
$\mcal{A}, \mcal{A}' \in \opn{Ob} ( \twocat{P}(R, X)(X) )$
are such that the corresponding twisted objects
$\gerbe{A}, \gerbe{A}' \in \opn{TwOb}(\twocat{P}(R, X))$
are twisted equivalent. Let 
\[ (\Phi_{\mrm{gau}}, \Phi_{\mrm{rep}}) :
(\gerbe{G}, \gerbe{A}, \opn{cp}) \to 
(\gerbe{G}', \gerbe{A}', \opn{cp}') \]
be a twisted gauge equivalence. 
There is an object
$i \in \opn{Ob} ( \gerbe{G}(X) )$
such that $\mcal{A} = \gerbe{A}(i)$, and there is an object
$i' \in \opn{Ob} ( \gerbe{G}'(X) )$
such that $\mcal{A}' = \gerbe{A}'(i')$.
Let 
$j' := \Phi_{\mrm{gau}}(i) \in \opn{Ob} ( \gerbe{G}'(X) )$; 
so there is an isomorphism 
$\Phi_{\mrm{rep}} : \mcal{A} = \gerbe{A}(i) \iso \gerbe{A}'(j')$
in $\twocat{P}(R, X)$. 

On the other hand, since the groupoid
$\gerbe{G}'(X)$ is connected, there is some isomorphism 
$g' : j' \iso i'$ in it. Therefore we get an isomorphism
$\gerbe{A}'(g') : \gerbe{A}'(j') \iso \gerbe{A}'(i') = \mcal{A}'$
in $\twocat{P}(R, X)(X)$. We see that 
$\mcal{A} \cong \mcal{A}'$ in $\twocat{P}(R, X)(X)$. This proves that 
function (\ref{eqn:40}) is injective. 
\end{proof}

We end this section with an example.

\begin{exa} \label{exa:18}
It is easy to construct an example of a commutative twisted associative (or
Poisson) $\K[[\hbar]]$-deformation of $\mcal{O}_X$ that is really
twisted (Definition \ref{dfn:25}). Take an algebraic variety $X$ with nonzero
cohomology class $c \in \mrm{H}^2(X, \mcal{O}_X)$. 
Let $\bsym{U}$ be an affine open covering of $X$, and let
$\{ c_{k_0, k_1, k_2} \}$ be a normalized \v{C}ech $2$-cocycle representing $c$
on this covering. Now consider the geometric descent datum
$\bsym{d} = \bigl( \{ \mcal{A}_{k_0} \}, \{ g_{k_0, k_1} \}, 
\{ a_{k_0, k_1, k_2} \} \bigr)$
with $\mcal{A}_{k_0} := \mcal{O}_{U_{k_0}}[[\hbar]]$, 
$g_{k_0, k_1} := \bsym{1}$ and
$a_{k_0, k_1, k_2} := \opn{exp}(\hbar c_{k_0, k_1, k_2})$.
The corresponding twisted deformation $\gerbe{A}$ that we get by Theorem
\ref{thm:glu.1} (namely $\gerbe{A} := \opn{glu}(\bsym{d})$ in Lemma
\ref{lem:bitorsors.9}) will have obstruction
class $c$ in the first order central extension.
More precisely, in the central extension of gerbes (\ref{eqn:19}),
with $p = 1$, the obstruction class for the unique (up to isomorphism)
object $j$ of $(\gerbe{G} / \gerbe{N}_1)(X)$ is
\[ \opn{cl}^2_{F}(j) = c \hbar 
\in \mrm{H}^2 \bigl( X, 
(\m / \m^{2}) \ot \mcal{O}_X \bigr) , \]
which is nonzero. According to \cite[Theorem 4.17]{Ye4} we have
$\opn{Ob} \bigl( (\gerbe{G} / \gerbe{N}_2)(X) \bigr) = \emptyset$, 
implying that 
$\opn{Ob} \bigl( \gerbe{G}(X) \bigr) = \emptyset$. 
\end{exa}

\section{Lie Descent Data}
\label{sec:Lie-desc}

We begin this section by recalling what are $\mrm{L}_{\infty}$ morphisms
between DG Lie algebras,
following \cite[Sections 7-8]{Ye7}; see also \cite{Ko1, Hi2, CKTB}.
We denote by $\cat{DGLie} (\K)$ the category of differential graded Lie algebras
over $\K$, and by $\cat{DGCog}(\K)$ the category of DG unital cocommutative
coalgebras over $\K$. There are functors 
\[ \cat{DGLie}(\K) \xar{\opn{C}} \cat{DGCog}(\K) \xar{\opn{L}}
\cat{DGLie}(\K) , \]
called the {\em bar} and {\em cobar} functors respectively. 
These functors are adjoint to each other, and they have explicit formulas. 
The functor $\opn{C}$ is faithful.
By definition an $\mrm{L}_{\infty}$ morphism 
$\Psi : \g \to \h$ between DG Lie algebras $\g$ and $\h$
is a coalgebra homomorphism $\opn{C}(\Psi) : \opn{C}(\g) \to \opn{C}(\h)$.
In this way we get a new category $\cat{DGLie}_{\infty}(\K)$, with
functors 
\[ \cat{DGLie} (\K) \xar{\opn{inc}} \cat{DGLie}_{\infty}(\K)
\xar{\opn{C}} \cat{DGCog}(\K) , \]
the first faithful and bijective of objects, and the second fully faithful.

The explicit formula for the functor $\opn{C}$ allows an explicit (yet
cumbersome) description of $\mrm{L}_{\infty}$ morphisms. An $\mrm{L}_{\infty}$
morphism $\Psi : \g \to \h$ can be described as a sequence
$\Psi = \{ \Psi_i \}_{i \geq 1}$ of $\K$-multilinear functions
$\Psi_i : \bwedge^i \g \to \mfrak{h}$,
satisfying certain equations (see \cite{Ko1}, 
\cite[Definition 3.7]{Ye1}, \cite[Section 3.7]{CKTB} or \cite[Section 7]{Ye7}).
The homomorphism $\Psi_1 : \g \to \mfrak{h}$ is a homomorphism of DG
$\K$-modules, and it respects the Lie brackets up to the homotopy
$\Psi_2$; and so on. Thus if $\Psi_i = 0$ for all $i \geq 2$, then 
$\Psi_1 : \g \to \mfrak{h}$ is a DG Lie algebra homomorphism.
An $\mrm{L}_{\infty}$ morphism 
$\Psi = \{ \Psi_i \}_{i \geq 1}$ is called a quasi-isomorphism
if $\Psi_1$ is a quasi-isomorphism. 

Take a DG Lie algebra $\g$, and let 
$\til{\g} := (\opn{L} \circ \opn{C})(\g)$.
According to \cite[Proposition 4.4.3(1)]{Hi2} the adjunction homomorphism
$\zeta_{\g} : \til{\g} \to \g$ in $\cat{DGLie}(\K)$ is a 
quasi-isomorphism. Thus if $\g$ is a quasi quantum type DG Lie algebra, then 
so is $\til{\g}$. (Note that even if $\g$ is quantum type, but nonzero, the DG
Lie algebra  $\til{\g}$ is unbounded below.) Thus we have a functor 
\[ \opn{L} \circ \opn{C} : \cat{DGLie}_{\infty}(\K) \to \cat{DGLie}(\K) \]
sending quasi quantum type DG Lie algebras to quasi quantum type DG Lie
algebras. 

Now suppose $(S, \n)$ is another parameter algebra, and
$\nu : R \to S$ is a homomorphism of parameter algebras. 
There are pronilpotent DG Lie algebras $\m \hot \g$ and 
$\n \hot \h$, an
induced $R$-multilinear $\mrm{L}_{\infty}$ morphism
$\nu \hot \Psi : \m \hot \g \to \n \hot \mfrak{h}$.
We know that there is an induced function
\begin{equation}
\opn{MC}(\nu \hot \Psi) : \opn{MC}(\m \hot \g) \to 
\opn{MC}(\n \hot \mfrak{h})
\end{equation}
between MC sets, with explicit formula
\begin{equation} \label{eqn:300}
\opn{MC}(\nu \hot \Psi) (\om) :=
\sum_{i \geq 1} \smfrac{1}{i!} \Psi_{i}(\nu(\om), \ldots, \nu(\om)) .
\end{equation}
The function $\opn{MC}(\nu \hot \Psi)$ is functorial in $\Psi$ and $\nu$.
If $\Psi$ is an $\mrm{L}_{\infty}$ quasi-isomorphism and $\nu$ is an
isomorphism, then we get an induced bijection
\begin{equation} \label{eqn:22}
\ol{\opn{MC}}(\nu \hot \Psi) : \ol{\opn{MC}}(\m \hot \g) \to 
\ol{\opn{MC}}(\n \hot \mfrak{h})  
\end{equation}
on gauge equivalence classes.
See \cite[Theorem 4.2]{Ye7}.

Now let us suppose that $\g$ and $\h$ are of quasi quantum type
(Definition \ref{dfn:220}), so
that their Deligne crossed groupoids exits. If $\Psi : \g \to \h$ is a DG Lie
homomorphism, then there is an induced morphism of crossed groupoids
\begin{equation} \label{eqn:403}
\opn{Del}(\Psi, \nu) : \opn{Del}(\g, R) \to \opn{Del}(\h, S) ,
\end{equation}
and this is functorial in $\Psi$ and $\nu$. Furthermore, if $\Psi$ is a
quasi-isomorphism and $\nu$ is an isomorphism, then $\opn{Del}(\Psi, \nu)$ is a
weak equivalence of crossed groupoids, by \cite[Theorem 0.4]{Ye7}.
However we do not know if this is true when $\Psi$ is  an $\mrm{L}_{\infty}$
morphism; to be precise, we do not know if there is a functor $\opn{Del}(-,R)$
from $\cat{DGLie}_{\infty}(\K)$ to $\cat{CrGrpd}$. 
The best we can do is the next theorem. 

\begin{thm} \label{thm:400}
Let $\Psi : \g \to \h$ be an $\mrm{L}_{\infty}$ morphism between 
quasi quantum type DG Lie algebras, and let 
$\nu : R \to S$ be a homomorphism between parameter algebras.
Consider the first diagram below. It is a diagram in $\cat{DGLie}(\K)$, 
where $\til{\g} := (\opn{L} \circ \opn{C})(\g)$, 
$\til{\h} := (\opn{L} \circ \opn{C})(\h)$, 
$\til{\Psi} := (\opn{L} \circ \opn{C})(\Psi)$,
and $\zeta_{\g}$, $\zeta_{\h}$ are the adjunction morphisms.
Applying $\opn{Del}(-,-)$ to it we obtain the second diagram, which
is in $\cat{CrGrpd}$.
\[
\UseTips \xymatrix @C=6ex @R=4ex {
\til{\g}
\ar[d]_{\zeta_{\g}}
\ar[r]^(0.5){\til{\Psi}}
&
\til{\h}
\ar[d]^{\zeta_{\h}}
\\
\g
&
\h
} 
\hspace{8ex}
\UseTips \xymatrix @C=10ex @R=5ex {
\opn{Del}(\til{\g}, R)
\ar[d]_{\opn{Del}(\zeta_{\g}, \bsym{1}_R)}
\ar[r]^(0.5){\opn{Del}(\til{\Psi}, \nu)}
&
\opn{Del}(\til{\h}, S)
\ar[d]^{\opn{Del}(\zeta_{\h}, \bsym{1}_S)}
\\
\opn{Del}(\g, R)
&
\opn{Del}(\h, S)
}
\]
Then the morphisms $\opn{Del}(\zeta_{\g}, \bsym{1}_R)$ and 
$\opn{Del}(\zeta_{\h}, \bsym{1}_S)$ in $\cat{CrGrpd}$ are weak equivalences.
If $\Psi$ is an  $\mrm{L}_{\infty}$ quasi-isomorphism, 
and if $\nu$ is an isomorphism, then 
$\opn{Del}(\til{\Psi}, \nu)$ is a weak equivalence.
\end{thm}

\begin{proof}
As already noted, the vertical arrows in the first diagram above
are quasi-isomorphisms (this is by \cite[Proposition 4.4.3(1)]{Hi2}).
According to \cite[Theorem 6.13]{Ye7} the morphisms 
$\opn{Del}(\zeta_{\g}, \bsym{1}_R)$ and 
$\opn{Del}(\zeta_{\h}, \bsym{1}_S)$ are weak equivalences. 

If $\Psi$ is an $\mrm{L}_{\infty}$ quasi-isomorphism and $\nu$ is an
isomorphism, then by \cite[Lemma 7.3]{Ye7} the  DG Lie homomorphism
$\til{\Psi} : \til{\g} \to \til{\h}$ is a quasi-isomorphism.
Again using \cite[Theorem 6.13]{Ye7} we conclude that 
$\opn{Del}(\til{\Psi}, \nu)$ is a weak equivalence.
\end{proof}

 A cosimplicial DG Lie algebra, i.e.\ an object of
$\bsym{\Delta}(\cat{DGLie} (\K))$, is  a collection
$\g = \{ \mfrak{g}^{p} \}_{p \in \mbb{N}}$
of DG Lie algebras
$\mfrak{g}^{p} = \bigoplus_{i \in \mbb{Z}} \mfrak{g}^{p, i}$.
For every $\alpha \in \bsym{\Delta}^q_p$
there is a DG Lie algebra homomorphism 
$\g(\alpha) : \mfrak{g}^{p} \to \mfrak{g}^{q}$,
and these homomorphisms satisfy the simplicial relations.
A cosimplicial DG Lie algebra 
$\g = \{ \mfrak{g}^{p} \}_{p \in \mbb{N}}$ is said to be of quasi quantum type
if every $\g^p$ is a  quasi quantum type DG Lie algebra.

A cosimplicial quasi quantum type DG Lie algebra 
$\g = \{ \mfrak{g}^{p} \}_{p \in \mbb{N}}$ and a parameter algebra $R$
give rise to a cosimplicial crossed groupoid 
$\opn{Del}(\g, R) = \bigl\{ \opn{Del}(\g^p, R)  
\bigr\}_{p \in \mbb{N}}$.

\begin{dfn} \label{dfn:Lie-desc.102}
Let $\g$ be a cosimplicial quasi quantum type DG Lie algebra, and let $R$ be a
parameter algebra, all over the field $\K$. 
A {\em Lie descent datum} in $(\g, R)$ is an element of the set 
$\opn{Desc}(\opn{Del}(\g, R))$;
cf.\ Definition \ref{dfn:101}.

Given 
$\bsym{\de}, \bsym{\de}' \in \opn{Desc}(\opn{Del}(\g, R))$,
a {\em gauge transformation} $\bsym{\delta} \to \bsym{\delta}'$ is 
a gauge transformation of combinatorial descent data, as defined in
Definition \ref{dfn:100}.

Passing to gauge equivalence classes we get the set
$\ol{\opn{Desc}}(\opn{Del}(\g, R))$.
\end{dfn}

Let $\g = \{ \mfrak{g}^{p} \}_{p \in \mbb{N}} $ be a cosimplicial quasi quantum
type DG Lie algebra. In a Lie descent datum 
$\bsym{\de} = (\de^0, \de^1, \de^2) \in \opn{Desc}(\opn{Del}(\g, R))$,
the component $\de^0$ belongs to $\opn{MC}(\m \hot \g^0)$.
Thus there is a function 
$\opn{Desc}(\opn{Del}(\g, R)) \to \opn{MC}(\m \hot \g^0)$.
This function respects gauge equivalence, and we get an induced function 
\begin{equation} \label{eqn:Lie-desc.105}
\ol{\opn{Desc}}(\opn{Del}(\g, R)) \to \ol{\opn{MC}}(\m \hot \g^0) .
\end{equation}

Let $\g = \{ \mfrak{g}^{p} \}_{p \in \mbb{N}}$ and 
$\h = \{ \mfrak{h}^{p} \}_{p \in \mbb{N}}$ 
be cosimplicial  quasi quantum type DG Lie algebras. By {\em cosimplicial 
$\mrm{L}_{\infty}$ morphism} 
$\Psi : \g \to \h$ we mean a collection 
$\Psi = \{ \Psi^{p} \}_{p \in \mbb{N}}$
of $\mrm{L}_{\infty}$ morphisms $\Psi^p : \g^p \to \h^p$, 
such that for every $\al : p \to q$ in $\bsym{\Delta}$ we have 
$\Psi^q \circ \g(\al) = \h(\al) \circ \Psi^p$
in $\cat{DGLie}_{\infty}(\K)$.
This gives us a category $\bsym{\Delta}(\cat{DGLie}_{\infty}(\K))$.
We call such $\Psi$ a {\em cosimplicial $\mrm{L}_{\infty}$ quasi-isomorphism} if
each $\Psi^p$ is an $\mrm{L}_{\infty}$ quasi-isomorphism.

\begin{thm}[Equivalence for Lie Descent]  \label{thm:Lie-desc.101}
Let $\Psi : \g \to \h$ be a cosimplicial $\mrm{L}_{\infty}$ morphism between 
cosimplicial quasi quantum type DG Lie algebras, and let 
$\nu : R \to S$ be a homomorphism between parameter algebras. Then there
is a function 
\[ \ol{\opn{Desc}}(\opn{Del})(\Psi, \nu) : 
\ol{\opn{Desc}}(\opn{Del}(\g, R)) \to \ol{\opn{Desc}}(\opn{Del}(\h, S))  \]
with these properties. 

\begin{enumerate}
\rmitem{i} The function $\ol{\opn{Desc}}(\opn{Del})(\Psi, \nu))$
is functorial in $\Psi$ and $\nu$.

\rmitem{ii}  If $\Psi$ is a morphism in $\cat{DGLie}(\K)$ then 
$\ol{\opn{Desc}}(\opn{Del})(\Psi, \nu) = 
\ol{\opn{Desc}}(\opn{Del}^{}(\Psi, \nu))$,
where  
$\opn{Del}^{}(\Psi, \nu)$ is corresponding the morphism in
$\bsym{\Delta}(\cat{CrGrpd})$.

\rmitem{iii} The diagram 
\[ \UseTips \xymatrix @C=18ex @R=5ex {
\ol{\opn{Desc}}(\opn{Del}(\g, R))
\ar[d]
\ar[r]^(0.5){\ol{\opn{Desc}}(\opn{Del})(\Psi, \nu)}
&
\ol{\opn{Desc}}(\opn{Del}(\h, S))
\ar[d]
\\
\ol{\opn{MC}}(\m \hot \g^0)
\ar[r]^(0.5){\ol{\opn{MC}}(\nu \hot \Psi)}
&
\ol{\opn{MC}}(\n \hot \h^0)
} \]
with vertical arrows \tup{(\ref{eqn:Lie-desc.105})}, commutative. 

\rmitem{iv} If $\Psi$ is a cosimplicial $\mrm{L}_{\infty}$ quasi-isomorphism, 
and if $\nu$ is an isomorphism, then 
$\ol{\opn{Desc}}(\opn{Del})(\Psi, \nu)$ is bijective.  
\end{enumerate}
\end{thm}

Please note the subtle notational distinction between 
$\ol{\opn{Desc}}(\opn{Del})(\Psi, \nu)$ and \lb
$\ol{\opn{Desc}}(\opn{Del}^{}(\Psi, \nu))$.
In general (except in the situation of property (ii)) there is no morphism of
cosimplicial crossed groupoids $\opn{Del}^{}(\Psi, \nu)$, and thus the function 
$\ol{\opn{Desc}}(\opn{Del}^{}(\Psi, \nu))$ is not defined; only the function 
$\ol{\opn{Desc}}(\opn{Del})(\Psi, \nu)$ on equivalence classes of descent data
is defined. 

\begin{proof}
Applying the functor $\ol{\opn{Desc}}$ to the second diagram in Theorem 
\ref{thm:400}  we get a diagram (the solid arrows) 
\begin{equation} \label{eqn:405}
\UseTips \xymatrix @C=18ex @R=7ex {
\ol{\opn{Desc}}(\opn{Del}(\til{\g}, R))
\ar[d]
\ar[r]^(0.5){\ol{\opn{Desc}}(\opn{Del}(\til{\Psi}, \nu))}
&
\ol{\opn{Desc}}(\opn{Del}(\til{\h}, S))
\ar[d]
\\
\ol{\opn{Desc}}(\opn{Del}(\g, R))
\ar@{-->}[r]^(0.5){\ol{\opn{Desc}}(\opn{Del})(\Psi, \nu)}
&
\ol{\opn{Desc}}(\opn{Del}(\h, S))
} 
\end{equation}
in $\cat{Set}$.
Theorem \ref{thm:100} tells us that the vertical arrows here are bijections.
Hence there is a unique function 
$\ol{\opn{Desc}}(\opn{Del})(\Psi, \nu)$ making
this diagram commutative. 
The functoriality of $\ol{\opn{Desc}}(\opn{Del})(\Psi, \nu)$
in $\Psi$ is because $\opn{L} \circ \opn{C}$ is a functor; and the
functoriality in $\nu$ is trivial.

If $\Psi$ is in $\cat{DGLie}(\K)$ then we can add the arrow 
$\opn{Del}^{}(\Psi, \nu)$ into the second diagram in Theorem 
\ref{thm:400}, making it commutative. This proves property (ii). 

Since the solid arrows in the diagram (\ref{eqn:405}) 
respect the function (\ref{eqn:Lie-desc.105}), so does the remaining arrow.
This implies property (iii). 

Finally, if $\Psi$ happens to be an $\mrm{L}_{\infty}$
quasi-isomorphism, and $\nu$ is an isomorphism, then according to 
Theorem \ref{thm:400} the morphism of crossed groupoids
$\opn{Del}(\til{\Psi}, \nu)$ is also a weak equivalence.
Hence the functions
$\ol{\opn{Desc}}(\opn{Del}(\til{\g}, R))$
and 
$\ol{\opn{Desc}}(\opn{Del})(\Psi, \nu)$ are bijective.
\end{proof}

\begin{rem}
If we assume that the parameter algebras in question are all artinian, then in
Theorems \ref{thm:400} and \ref{thm:Lie-desc.101} we can
drop the condition that the DG Lie algebras are quasi quantum type. See
\cite[Theorem 6.13]{Ye7}. 
\end{rem}

\begin{rem} \label{rem:Lie-desc.101}
Here is a brief comparison to other work on these matters. 
A special case of Theorem \ref{thm:Lie-desc.101} is 
\cite[Proposition 3.3.1]{BGNT1}, which only deals with cosimplicial DG Lie
quasi-isomorphisms between quantum type DG Lie algebras, and only with artinian
parameter algebras. The methods used in \cite{BGNT1} do not seem to be adequate
for obtaining the result we need. 

Getzler wrote two papers \cite{Ge1, Ge2} on higher Deligne groupoids. 
Suppose $\g$ is quantum type DG Lie algebra, and $R$ is an artinian parameter
algebra. In the first paper Getzler discusses the Deligne $2$-groupoid
$\opn{Del}(\g, R)$, which is denoted there by $\mcal{C}(\m \ot \g)$.  In the
second paper he introduced a simplicial set $\ga_{\bdot}(\m \ot \g)$.
A main result of \cite{Ge2} is that  an $\mrm{L}_{\infty}$ quasi-isomorphism 
$\g \to \h$ induces a homotopy equivalence 
$\ga_{\bdot}(\m \ot \g) \to \ga_{\bdot}(\m \ot \h)$.
Getzler says that the simplicial set $\ga_{\bdot}(\g \ot \m)$ ``generalizes the
Deligne $2$-groupoid'', but this is not proved, nor even explained.
For some time we had hoped to rely on Getzler's results for a quick proof of 
Theorem \ref{thm:Lie-desc.101} (at least for artinian parameter algebras), but
we could not find a satisfactory way to fill the gap mentioned above.
This is why we eventually had to come up with  Theorem \ref{thm:400}. 

Theorem \ref{thm:Lie-desc.101} (for artinian $R$ and $S$) is similar to 
\cite[Theorem 3.5]{CH}. But \cite[Theorem 3.5]{CH} is not actually proved in
that paper (there appear to be holes in the arguments, cf.\ Remark
\ref{rem:cosim.106}). Another possible difficulty is that in \cite{CH} the
authors rely on the unproved part of Getzler's work that we mentioned above. 

Here is a very recent development. The new paper \cite{BGNT3} appears to fill
the gap in \cite{Ge2}
mentioned above. According to \cite[Remark 1.1]{BGNT3}, their work implies that
the simplicial set $\ga_{\bdot}(\m \ot \g)$ equals the $2$-nerve of the 
Deligne crossed groupoid $\opn{Del}(\g, R)$~!
\end{rem}

\section{From Lie Descent to Geometric Descent}
\label{sec:Lie-Geo}

Suppose $\mcal{G}$ is a sheaf of abelian groups on a topological space $X$. 
Let us recall how to construct the \v{C}ech cosimplicial abelian group
$\mrm{C}(\bsym{U}, \mcal{G})$ associated to $\mcal{G}$ and to an  
open covering $\bsym{U} = \{ U_k \}_{k \in K}$ of $X$. 
For $p \in \mbb{N}$ we take the abelian group
\begin{equation} \label{eqn:Lie-Geo.103}
\mrm{C}^p(\bsym{U}, \mcal{G}) := 
\prod_{\bsym{k} \in \bsym{\Delta}_p(K)}
\Gamma(U_{\bsym{k}}, \mcal{G}) .
\end{equation}
See formula (\ref{eqn:mdd.111}) regarding the set $\bsym{\Delta}_p(K)$.
If $\bsym{k} \in \bsym{\Delta}_q(K)$ and
$\alpha \in \bsym{\Delta}^q_p$, then
$\al(\bsym{k}) \in \bsym{\Delta}_p(K)$, and there is an inclusion of open sets
$U_{\bsym{k}} \subset U_{\alpha(\bsym{k})}$,
so by restriction there is an induced homomorphism
$\mrm{C}^p(\bsym{U}, \mcal{G}) \to 
\mrm{C}^q(\bsym{U}, \mcal{G})$.
If $\mcal{G}$ is a sheaf of DG Lie algebras, then $\mrm{C}(\bsym{U}, \mcal{G})$
is a cosimplicial DG Lie algebra.

Now suppose 
$\bsym{U}' = \{ U_k \}_{k \in K'}$
is another open covering of $X$, and 
$\rho : \bsym{U}' \to \bsym{U}$ is a refinement.
Then there is a homomorphism of cosimplicial DG Lie algebras 
\begin{equation} \label{eqn:Lie-Geo.106}
\rho^* : \mrm{C}(\bsym{U}, \mcal{G}) \to 
\mrm{C}(\bsym{U}', \mcal{G}) ,
\end{equation}
with an obvious formula. 

\begin{setup} \label{setup:6}
$\K$ is a field of characteristic $0$; $(R, \m)$ is a parameter 
algebra over $\K$; and $X$ is a smooth algebraic variety
over $\K$, with structure sheaf $\mcal{O}_X$.
\end{setup}

This is the same as Setup \ref{setup:200}, except that here $X$ is a smooth
algebraic variety.

In Section \ref{sec:defs-DG-Lie} we discussed the sheaves of DG Lie algebras  
$\mcal{T}_{\mrm{poly}, X}$, $\mcal{D}_{\mrm{poly}, X}$ and 
$\mcal{D}_{\mrm{poly}, X}^{\mrm{nor}}$ on the variety $X$. 
Lie descent data $\opn{Desc}(\opn{Del}((\g, R))$ for a cosimplicial quantum
type DG Lie algebra $\g$ was introduced in Section \ref{sec:Lie-desc}, and
its functorial properties were shown.

Recall the stacks of crossed groupoids $\twocat{AssDef}(R, \mcal{O}_X)$ 
and $\twocat{PoisDef}(R, \mcal{O}_X)$ from Section \ref{sec:tw.sh}.
Let us write $\twocat{P}(R, X)$ for either of these stacks. 
The geometric descent data 
$\opn{Desc} \bigl( \opn{C}(\bsym{U}, \twocat{P}(R, X)) \bigr)$ was introduced in
Section \ref{sec:mdd}. The functorial dependence of 
$\ol{\opn{Desc}} \bigl( \opn{C}(\bsym{U}, \twocat{P}(R, X)) \bigr)$ on
the open covering $\bsym{U}$ is shown in
formula (\ref{eqn:23}). Regarding the dependence on $R$, consider a
homomorphism of parameter algebras $\sigma : R \to R'$.
Given a descent datum 
\[ \bsym{d} = \bigl( \{ \mcal{A}_{k_0} \}_{k_0 \in K}, \ldots \bigr) \in
\opn{Desc} \bigl( \opn{C}(\bsym{U}, \twocat{P}(R, X)) \bigr) \]
as in (\ref{eqn:mdd.120}),
let $\mcal{A}'_{k_0} := R' \hatotimes{R} \mcal{A}_{k_0}$, which by 
Proposition \ref{prop:206} is an
$R'$-deformation of $\mcal{O}_{U_{k_0}}$.
There are induced $R'$-linear gauge transformations
$g'_{k_0, k_1}$ and induced gauge elements
$a'_{k_0, k_1, k_2}$, and together these make up a descent datum
$\sigma(\bsym{d}) =  \bigl( \{ \mcal{A}'_{k_0} \}, \ldots \bigr)$.
This construction is a function
\begin{equation} \label{eqn:24}
\sigma : \opn{Desc} \bigl( \opn{C}(\bsym{U}, \twocat{P}(R, X)) \bigr)
\to \opn{Desc} \bigl( \opn{C}(\bsym{U} , \twocat{P}(R', X)) \bigr) , 
\end{equation}
which respects gauge equivalence.

\begin{thm}[Geometrization of Descent Data] \label{thm:11}
Let $(R, \m)$ and $X$ be as in Setup \tup{\ref{setup:6}}, and let
$\bsym{U}$ be a finite affine open covering of $X$.
We consider two cases\tup{:}
\begin{enumerate}
\item[$\vartriangleright$] The associative case, in which 
\[ \twocat{P}(R, X) := \twocat{AssDef}(R, \mcal{O}_X) \quad
\text{and} \quad
\g(\bsym{U}) := \mrm{C}(\bsym{U}, 
\mcal{D}_{\mrm{poly}, X}^{\mrm{nor}})
. \] 
\item[$\vartriangleright$] The Poisson case, in which 
\[ \twocat{P}(R, X) := \twocat{PoisDef}(R, \mcal{O}_X) \quad
\text{and} \quad
\g(\bsym{U}) := \mrm{C}(\bsym{U}, \mcal{T}_{\mrm{poly}, X}) . \] 
\end{enumerate}
So $\opn{C}(\bsym{U} , \twocat{P}(R, X))$
and 
$\opn{Del}(\g(\bsym{U}) , R)$ are cosimplicial crossed groupoids. 

In both cases there is a bijection
\[  \ol{\opn{geo}} : 
\ol{\opn{Desc}} \bigl( \opn{Del}(\g(\bsym{U}) , R) \bigr)
\iso
\ol{\opn{Desc}} \bigl( \opn{C}(\bsym{U} , \twocat{P}(R, X)) \bigr) , \]
which we call {\em geometrization}. The bijection $\opn{geo}$ respect
homomorphisms of parameter algebras $R \to R'$, and
refinements of coverings $\bsym{U}' \to \bsym{U}$.
\end{thm}

\begin{proof}
By Theorem \ref{thm:205} there is a weak equivalence of cosimplicial crossed 
groupoids  
\[  \opn{geo} :  \opn{Del}(\g(\bsym{U}) , R) \to
\opn{C}(\bsym{U} , \twocat{P}(R, X))  . \]
(Actually this is an equivalence of crossed groupoids in each cosimplicial
dimension.) Theorem \ref{thm:100} says that 
$\ol{\opn{geo}} := \ol{\opn{Desc}}(\opn{geo})$ is a bijection.
The functorial dependence on $R$ and $\bsym{U}$ is clear.
\end{proof}

\begin{rem}
More generally, if $\mcal{G}$ is any sheaf of quantum type DG Lie
algebras on a topological space $X$, 
$\bsym{U}$ is an open covering of $X$,
$\g(\bsym{U}) := \mrm{C}(\bsym{U}, \mcal{G})$
and 
$\bsym{\delta} \in \opn{Desc}\bigl( \opn{Del}(\g(\bsym{U}), R) \bigr)$, 
then $\opn{geo}(\bsym{\delta})$ is a geometric descent datum for a gerbe 
$\gerbe{H}$ on $X$. For an index $k \in K$ 
there is an object
$k \in \opn{Ob} (\gerbe{H}(U_k))$, and its sheaf of automorphisms
is 
$\gerbe{H}(k) = \opn{Exp}(\m \hot \mcal{G}^{-1})_{\om_{k}}$.
It might be interesting to study the kind of gerbes that arise in
this way.
\end{rem}

Suppose $X'$ is another smooth algebraic variety, and $g : X' \to X$ is an
\'etale morphism. 
It follows from \cite[Proposition 4.6]{Ye2} that there are
induced homomorphisms of sheaves of DG Lie algebras 
\begin{equation}
g^* : \mcal{T}^{}_{\mrm{poly}, X} \to
g_* (\mcal{T}^{}_{\mrm{poly}, X'})
\end{equation}
and
\begin{equation}
g^* : \mcal{D}_{\mrm{poly}, X}^{\mrm{nor}} \to 
g_* (\mcal{D}_{\mrm{poly}, X'}^{\mrm{nor}})
\end{equation}
on $X$, extending the ring homomorphism
$g^* : \mcal{O}_X \to g_* (\mcal{O}_{X'}$).

Let $\bsym{U} = \{ U_k \}_{k \in K}$ be an open covering of $X$,
and $\bsym{U}' = \{ U'_k \}_{k \in K'}$ an open covering of
$X'$. A {\em morphism of coverings extending $g$}
is a function $\rho : K' \to K$, such that $g(U'_k) \subset U_{\rho(k)}$ for
every $k \in K'$. We also say that
$\rho : \bsym{U}' \to \bsym{U}$
is a morphism of coverings extending $g$.

Suppose $\bsym{V} = \{ V_l \}_{l \in L}$ is another open
covering of $X$. We get a new open covering 
$\bsym{U} \coprod \bsym{V} := \{ W_m \}_{m \in M}$ of $X$, where
$M := K \coprod L$, $W_m := U_m$ if $m \in K$, and 
$W_m := V_m$ if $m \in L$. (Of course $\bsym{U} \coprod \bsym{V}$ has a
lot of redundancy.) If $\tau : \bsym{V}' \to \bsym{V}$
is a morphism of coverings extending $g$, then 
$\rho \amalg \tau : \bsym{U}' \coprod \bsym{V}' \to \bsym{U} \coprod \bsym{V}$
is also a morphism of coverings extending $g$.

Given a morphism of coverings
$\rho : \bsym{U}' \to \bsym{U}$ extending $g$, 
there is an induced homomorphism of cosimplicial DG Lie algebras
$\rho^* : \g(\bsym{U}) \to \g(\bsym{U}')$.
Here we use the notation of Theorem \ref{thm:11}, with obvious
modifications; e.g.\ 
$\g(\bsym{U}') := \mrm{C}(\bsym{U}', \mcal{T}_{\mrm{poly}, X'})$
in the Poisson case.

\begin{thm} \label{thm:7}
Let $R$ and $X$ be as in Setup \tup{\ref{setup:6}}, 
let $g : X' \to X$ be an \'etale morphism of varieties, and let 
$\si : R \to R'$ be a homomorphism of parameter algebras.
We use the notation of Theorem
\tup{\ref{thm:11}}, with obvious modifications pertaining to the
variety $X'$ and the algebra $R'$. Then, both in the Poisson case and
in the associative case, there is a unique function
\[ \opn{ind}_{\si, g} : 
\ol{\opn{TwOb}} \bigl( \twocat{P}(R, X) \bigr) \to 
\ol{\opn{TwOb}} \bigl( \twocat{P}(R', X') \bigr)  \]
with this property\tup{:}
\begin{enumerate}
\rmitem{$\lozenge$} Suppose $\bsym{U}$ \tup{(}resp.\ $\bsym{U}'$\tup{)}
is a finite affine open covering of $X$
\tup{(}resp.\ $X'$\tup{)}, and
$\rho : \bsym{U}' \to \bsym{U}$
is a morphism of coverings extending $g$.
Then the diagram of sets \tup{(\ref{eqn:220})} is commutative.
\end{enumerate}

\begin{equation} \label{eqn:220}
\UseTips \xymatrix @C=5ex @R=5ex {
\ol{\opn{Desc}} \bigl( \opn{Del}(\g(\bsym{U}), R) \bigr)
\ar[r]^(0.46){\ol{\opn{geo}}}
\ar[d]_{\ol{\opn{Desc}} (\opn{Del}(\rho^*, \si ))}
& 
\ol{\opn{Desc}} \bigl( \opn{C}(\bsym{U}, \twocat{P}(R, X)) \bigr)
& 
\ol{\opn{TwOb}} \bigl( \twocat{P}(R, X) \bigr) 
\ar[l]_(0.45){\ol{\opn{dec}}}
\ar[d]^{\opn{ind}_{\si, g}} 
\\
\ol{\opn{Desc}} \bigl( \opn{Del}(\g(\bsym{U}'), R') \bigr)
\ar[r]^(0.46){\ol{\opn{geo}}}
& 
\ol{\opn{Desc}} \bigl( \opn{C}(\bsym{U}', \twocat{P}(R', X')) \bigr)
& 
\ol{\opn{TwOb}} \bigl( \twocat{P}(R', X') \bigr) 
\ar[l]_(0.45){\ol{\opn{dec}}}
}
\end{equation}
Here $\ol{\opn{dec}}$ is the decomposition from Theorem \tup{\ref{thm:glu.1}},
$\ol{\opn{geo}}$ is the geometrization from Theorem \tup{\ref{thm:11}}, and 
$\ol{\opn{Desc}} ( \opn{Del}( \rho^*, \si ))$ is the function from Theorem
\tup{\ref{thm:Lie-desc.101}}.
\end{thm}

\begin{proof}
Choose coverings  $\bsym{U}$ and $\bsym{U}'$, and a morphism of
coverings
$\rho : \bsym{U}' \to \bsym{U}$, 
as in property ($\lozenge$). This is possible of course. 
The horizontal arrows in the diagram are bijections.
Define 
\[ \opn{ind}_{\si, g} :
\ol{\opn{TwOb}} \bigl( \twocat{P}(R, X) \bigr) \to 
\ol{\opn{TwOb}} \bigl( \twocat{P}(R', X') \bigr) \]
to be the unique function that makes the diagram commute.

It remains to show that the function
$\opn{ind}_{\si, g}$ is independent of
the choice of open coverings $\bsym{U}$ and $\bsym{U}'$, and the of morphism 
$\rho$ between them. So let $\bsym{V}$ be another finite affine
open covering of $X$, let $\bsym{V}'$ be another such covering of $X'$, and let
$\tau : \bsym{V}' \to \bsym{V}$
be a morphism of coverings extending $g$.
Consider the open covering 
$\bsym{W} := \bsym{U} \coprod \bsym{V}$ of $X$, the open covering 
$\bsym{W}' := \bsym{U}' \coprod \bsym{V}'$ of $X'$,
and the morphism of coverings 
$\chi := \rho \amalg \tau : \bsym{W}' \to \bsym{W}$
extending $g$. We obtain a commutative diagram of morphisms of coverings
\begin{equation} \label{eqn:Lie-Geo.15}
\UseTips \xymatrix @C=6ex @R=4ex {
\bsym{U}
\ar[r]
&
\bsym{W}
&
\bsym{V}
\ar[l]
\\
\bsym{U}'
\ar[u]^{\rho}
\ar[r]
& 
\bsym{W}' 
\ar[u]_{\chi}
&
\bsym{V}'
\ar[l]
\ar[u]_{\tau}
}
\end{equation}
extending $g$ vertically, and extending $\bsym{1}_X$ and  $\bsym{1}_{X'}$
horizontally.  
The functoriality in Theorem \ref{thm:11} and in Proposition
\ref{prop:bitorsors.6}, applied to the left square in diagram
(\ref{eqn:Lie-Geo.15}), and to the homomorphism $\si : R \to R'$,
yields a big commutative diagram in the shape of a rectangular box. 
The front face of this big diagram is the diagram in property ($\lozenge$),
and the rear face is the same diagram but with $\bsym{W}$ instead of
$\bsym{U}$. All horizontal arrows in the big diagram are bijections. 
It follows that the function $\opn{ind}_{\si, g}$ is the same when defined
using $\chi : \bsym{W}' \to \bsym{W}$.
But then, working with the right square in diagram (\ref{eqn:Lie-Geo.15}),
we see that the function $\opn{ind}_{\si, g}$ is also the same when defined
using $\tau : \bsym{V}' \to \bsym{V}$.
\end{proof}

\section{Cosimplicial Formality}
\label{sec:cosim-form}

In this section we prove Theorem \ref{thm:221}. We work in Setup
\ref{setup:6}.

Recall the sheaves of DG Lie algebras 
$\mcal{T}_{\mrm{poly}, X}$ and 
$\mcal{D}_{\mrm{poly}, X}$ from Section \ref{sec:defs-DG-Lie}. 
The Lie brackets are not $\mcal{O}_X$-linear (they are differential operators),
but forgetting them, both $\mcal{T}_{\mrm{poly}, X}$ and 
$\mcal{D}_{\mrm{poly}, X}$ are bounded below complexes of quasi-coherent 
$\mcal{O}_X$-modules.

The antisymmetrization homomorphism (also called the HKR map)
\begin{equation} \label{eqn:TDQ.120}
\Upsilon_1 : \mcal{T}^p_{\mrm{poly}, X} \to \mcal{D}^p_{\mrm{poly}, X}
\end{equation}
is defined as follows. For $p \geq 0$ and for local sections 
$\xi_i \in \mcal{T}^0_{\mrm{poly}, X}$ and $c_i \in \mcal{O}_X$, we let
\[ \Upsilon_1(\xi_1 \wedge \cdots \wedge \xi_{p+1})(c_1, \ldots, c_{p+1}) :=
{\smfrac{1}{(p+1)!}} \sum_{\sigma} \opn{sign}(\sigma)
\xi_{\sigma(1)}(c_1) \cdots \xi_{\sigma(p+1)}(c_{p+1}) . \]
The summation is over permutations $\sigma$ of the set
$\{ 1, \ldots, p+1 \}$. 
For $p = -1$ we let $\Upsilon_1$ be the identity map of $\mcal{O}_X$. 
According to \cite[Corollary 4.12]{Ye2} the homomorphism
$\Upsilon_1 : \mcal{T}_{\mrm{poly}, X} \to \mcal{D}_{\mrm{poly}, X}$
is a quasi-isomorphism of sheaves of $\mcal{O}_X$-modules. Note that
$\Upsilon_1$ does not respect the Lie brackets; but
$\mrm{H}(\Upsilon_1) : \mcal{T}_{\mrm{poly}, X} \to 
\opn{H} (\mcal{D}_{\mrm{poly}, X})$
is an isomorphism of sheaves of graded Lie algebras. 

Let $\mcal{G}$ and $\mcal{H}$ be sheaves of DG Lie algebras on
$X$. We denote by $\boprod^i \mcal{G}$ the $i$-fold cartesian product of the
sheaf $\mcal{G}$. An $\mrm{L}_{\infty}$ morphism
$\Psi : \mcal{G} \to \mcal{H}$ is a sequence
$\Psi = \{ \Psi_i \}_{i \geq 1}$ of $\K$-multilinear
sheaf morphisms $\Psi_i : \boprod^i \mcal{G} \to \mcal{H}$, 
such that for any open set $U$ the sequence
$\{ \Gamma(U, \Psi_i) \}_{i \geq 1}$ is an $\mrm{L}_{\infty}$ morphism
$\Gamma(U, \mcal{G}) \to \Gamma(U, \mcal{H})$. 
We say that $\Psi$ is an $\mrm{L}_{\infty}$ quasi-isomorphism if 
$\Psi_1 : \mcal{G} \to \mcal{H}$ is a quasi-isomorphism of sheaves of
$\K$-modules. See Section \ref{sec:Lie-desc} for a reminder on
$\mrm{L}_{\infty}$ morphisms.

Let $\bsym{U}$ finite affine open covering of $X$.
Recall the mixed resolution
$\opn{mix} : \mcal{M} \to \opn{Mix}_{\bsym{U}}(\mcal{M})$
of any quasi-coherent $\mcal{O}_X$-module $\mcal{M}$ 
from \cite[Section 6]{Ye1}. This is a functorial quasi-isomorphism, which is a
``mix'' of the commutative \v{C}ech resolution relative to the covering 
$\bsym{U}$, and the jet resolution (with its Grothendieck connection). 
The complex $\opn{Mix}_{\bsym{U}}(\mcal{M})$ is acyclic for the functor 
$\Gamma(V, -)$ for any affine open set $V \subset X$. 
Hence the homomorphism of cosimplicial $\K$-modules 
\[ \opn{C}(\bsym{U}, \opn{mix}) :
\opn{C}(\bsym{U}, \mcal{M}) \to 
\opn{C}(\bsym{U}, \opn{Mix}_{\bsym{U}}(\mcal{M})) \]
(cf.\ (\ref{eqn:Lie-Geo.103}))
is a cosimplicial quasi-isomorphism (i.e.\ it is a quasi-isomorphism in each
simplicial dimension).
Taking the sheaves $\mcal{T}_{\mrm{poly}, X}$ and $\mcal{D}_{\mrm{poly}, X}$,
we obtain quasi-isomorphisms of sheaves of DG Lie algebras
$\mcal{T}_{\mrm{poly}, X} \to  \opn{Mix}_{\bsym{U}}(\mcal{T}_{\mrm{poly}, X})$
and 
$\mcal{D}_{\mrm{poly}, X} \to  \opn{Mix}_{\bsym{U}}(\mcal{D}_{\mrm{poly}, X})$.
See \cite[Proposition 6.3]{Ye1}.

Let $g : X' \to X$ be a morphism of varieties, $\mcal{M}$ (resp.\ 
$\mcal{M}'$) a quasi-coherent $\mcal{O}_X$-module (resp.\ 
$\mcal{O}_{X'}$-module), $\phi : \mcal{M} \to g_* (\mcal{M}')$ an 
$\mcal{O}_X$-linear map, $\bsym{U}$ (resp.\ $\bsym{U}'$) a finite affine
open covering of $X$ (resp.\ $X'$), and 
$\rho : \bsym{U}' \to \bsym{U}$ a morphism of coverings extending $g$. 
The construction of mixed resolutions (see \cite[Section 6]{Ye1} or
\cite[Section 4]{Ye3}) shows that there is an induced sheaf homomorphism 
\[ \opn{Mix}_{\rho}(\phi) : \opn{Mix}_{\bsym{U}}(\mcal{M}) \to 
g_* (\opn{Mix}_{\bsym{U}'}(\mcal{M}')) \]
on $X$. 
And of course globally there is a homomorphism of cosimplicial $\K$-modules 
\[ \opn{C}(\rho, \phi) : \opn{C}(\bsym{U}, \mcal{M}) \to
\opn{C}(\bsym{U}', \mcal{M}') . \]

Now assume $g : X' \to X$ is an \'etale morphism. Let $\mcal{G}$ denote either 
$\mcal{T}_{\mrm{poly}}$ or $\mcal{D}_{\mrm{poly}}$, so that 
$\mcal{G}_X$ is either $\mcal{T}_{\mrm{poly}, X}$ or
$\mcal{D}_{\mrm{poly}, X}$.
As we already saw in Section \ref{sec:Lie-Geo}, there is a homomorphism of 
sheaves of DG Lie algebras 
$g^* : \mcal{G}_X \to g_* (\mcal{G}_{X'})$ on $X$.  
Then there is an induced homomorphism of sheaves of DG Lie algebras
\[ \opn{Mix}_{\rho}(g^*) : \opn{Mix}_{\bsym{U}}(\mcal{G}_{X}) \to 
g_* (\opn{Mix}_{\bsym{U}'}(\mcal{G}_{X'})) \]
on $X$. Globally there is a commutative diagram of homomorphism of cosimplicial
DG Lie algebras
\begin{equation} \label{eqn:TDQ.101}
\UseTips \xymatrix @C=15ex @R=5ex {
\opn{C}(\bsym{U}, \mcal{G}_{X})
\ar[r]^{\opn{C}(\rho, g^*)}
\ar[d]_{\opn{C}(\bsym{U}, \opn{mix})}
&
\opn{C}(\bsym{U}', \mcal{G}_{X'})
\ar[d]^{\opn{C}(\bsym{U}', \opn{mix})}
\\
\opn{C} \bigl( \bsym{U}, \opn{Mix}_{\bsym{U}}(\mcal{G}_{X}) \bigr)
\ar[r]^{\opn{C}(\rho, \opn{Mix}_{\rho}(g^*))}
&
\opn{C} \bigl( \bsym{U}', \opn{Mix}_{\bsym{U}'}(\mcal{G}_{X'}) \bigr)
}
\end{equation}
where the vertical arrows are cosimplicial quasi-isomorphisms.
If $g : X' \to X$ is an isomorphism (e.g.\ $X' = X$ and 
$\bsym{U}' \to \bsym{U}$ is a refinement), then the horizontal arrows are also 
cosimplicial quasi-isomorphisms.

Let $n$ be the dimension of $X$.
An {\em \'etale coordinate system} on an open set $U \subset X$ is 
an \'etale morphism $s : U \to \mbf{A}^n_{\K}$.
Since $X$ is smooth, it is possible to find a
finite affine open covering 
$\bsym{U} = \{U_k \}_{k \in K}$, with an \'etale coordinate system
$s_k : U_k \to \mbf{A}^n_{\K}$ for each $k$. Let us write
$\bsym{s} := \{s_k \}_{k \in K}$. We refer to
$(\bsym{U}, \bsym{s})$ succinctly as a {\em covering with \'etale coordinates}.

Suppose $g : X \to X'$ is an \'etale morphism of varieties, and we are given a
covering with \'etale coordinates
$(\bsym{U}', \bsym{s}')$ of $X'$, where the indexing set is $K'$. A {\em
morphism of coverings with coordinates
extending $g$} is a function 
$\rho : K' \to K$,
such that $g(U'_{k'}) \subset U_{\rho(k')}$ and
$s_{\rho(k')} \circ g = s'_{k'}$ for every $k' \in K'$.
We refer to this morphism by 
$\rho : (\bsym{U}', \bsym{s}') \to (\bsym{U}, \bsym{s})$. 

The next result is a slight improvement of \cite[Theorem 0.2]{Ye1}. 
A similar result is \cite[Theorem 1.1]{VdB}.

\begin{thm}[Sheaf Formality] \label{thm:13}
Let $X$ be a smooth algebraic variety over $\K$, and assume $\R
\subset \K$. Let $(\bsym{U}, \bsym{s})$ be a finite affine open covering with
\'etale coordinates of $X$. Then\tup{:}
\begin{enumerate}
\item There is an $\mrm{L}_{\infty}$ quasi-isomorphism
\[ \Psi_{\bsym{s}} = \{ \Psi_{\bsym{s}; i} \}_{i \geq 1} :
\opn{Mix}_{\bsym{U}}(\mcal{T}_{\mrm{poly}, X}) \to 
\opn{Mix}_{\bsym{U}}(\mcal{D}_{\mrm{poly}, X})  \]
between sheaves of DG Lie algebras on $X$. 
\item The diagram of isomorphisms of sheaves of graded Lie algebras 
\[ \UseTips \xymatrix @C=9ex @R=5ex {
\mcal{T}_{\mrm{poly}, X}
\ar[r]^{\mrm{H}(\Upsilon_1)}
\ar[d]_{\mrm{H}(\opn{mix})}
&
\mrm{H} (\mcal{D}_{\mrm{poly}, X})
\ar[d]^{\mrm{H}(\opn{mix})}
\\
\mrm{H} (\opn{Mix}_{\bsym{U}}(\mcal{T}_{\mrm{poly}, X}))
\ar[r]^{\mrm{H}(\Psi_{\bsym{s}; 1})}
&
\mrm{H} (\opn{Mix}_{\bsym{U}}(\mcal{D}_{\mrm{poly}, X})) \ 
} \]
on $X$ is commutative.
\item Suppose $g : X \to X'$ is an \'etale morphism of varieties, 
$(\bsym{U}', \bsym{s}')$ is a finite affine open covering with \'etale
coordinates of $X'$, and 
$\rho : (\bsym{U}', \bsym{s}') \to (\bsym{U}, \bsym{s})$
is a morphism of coverings with coordinates extending $g$.
Then the diagram of $\mrm{L}_{\infty}$ morphisms on $X$
\[ \UseTips \xymatrix @C=9ex @R=5ex {
\opn{Mix}_{\bsym{U}}(\mcal{T}_{\mrm{poly}, X})
\ar[r]^{\Psi_{\bsym{s}}}
\ar[d]_{\opn{Mix}_{\rho}(g^*)}
&
\opn{Mix}_{\bsym{U}}(\mcal{D}_{\mrm{poly}, X})
\ar[d]^{\opn{Mix}_{\rho}(g^*)}
\\
g_* (\opn{Mix}_{\bsym{U}'}(\mcal{T}_{\mrm{poly}, X'}))
\ar[r]^{g_*(\Psi_{\bsym{s}'})}
&
g_* (\opn{Mix}_{\bsym{U}'}(\mcal{D}_{\mrm{poly}, X'}))
} \]
is commutative. 
\end{enumerate} 
\end{thm}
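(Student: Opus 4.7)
\medskip \noindent
\textbf{Proof proposal.} The strategy is to globalize Kontsevich's formality theorem on affine space using the étale coordinates $s_i : U_i \to \mbf{A}^n_{\K}$ and the formal-geometric character of the mixed resolution. Since $\R \subset \K$, Kontsevich's universal $\mrm{L}_{\infty}$ quasi-isomorphism is defined over $\K$ and yields, on any formal polydisk, an $\mrm{L}_{\infty}$ morphism
\[ \mcal{U} : \mcal{T}_{\mrm{poly}}(\K[[t_1, \ldots, t_n]]) \to
\mcal{D}_{\mrm{poly}}(\K[[t_1, \ldots, t_n]]) \]
whose first Taylor component $\mcal{U}_1$ is the HKR antisymmetrization $\nu$. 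The point is that the sections of $\opn{Mix}_{\bsym{U}}(\mcal{G})$ are, roughly, elements of $\mcal{G}$ equipped with formal Taylor expansions along the diagonal; after pulling back by the étale map $s_i$, those expansions live on formal polydisks, so the universal $\mcal{U}$ acts on them.

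First I would produce, for each $i$, a sheaf $\mrm{L}_{\infty}$ quasi-isomorphism
\[ \Psi_{s_i} : \Omega_{U_i} \otimes_{\mcal{O}_{U_i}} \mcal{P}_{U_i} \otimes_{\mcal{O}_{U_i}} \mcal{T}_{\mrm{poly}, U_i} \to \Omega_{U_i} \otimes_{\mcal{O}_{U_i}} \mcal{P}_{U_i} \otimes_{\mcal{O}_{U_i}} \mcal{D}_{\mrm{poly}, U_i} \]
by applying $\mcal{U}$ fiberwise to the formal Taylor expansions, using $s_i^*$ to identify $\mcal{P}_{U_i}$ with the relative formal polydisk around the diagonal. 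Because $s_i$ is étale, $s_i^* t_k$ provides a genuine system of local formal coordinates for $\mcal{P}_{U_i}$, and this identification is compatible with the Grothendieck connection $\nabla$. Second I would apply the cosimplicial construction $\what{\til{\mrm{N}}}\mrm{C}_{\bsym{U}}(-)$ to glue the $\Psi_{s_i}$ into a single $\mrm{L}_{\infty}$ morphism $\Psi_{\bsym{s}}$ between $\opn{Mix}_{\bsym{U}}(\mcal{T}_{\mrm{poly}, X})$ and $\opn{Mix}_{\bsym{U}}(\mcal{D}_{\mrm{poly}, X})$; compatibility on overlaps $U_{i_0 \cdots i_p}$ is handled because the $\Psi_{s_i}$ differ, after formal completion along the diagonal, only by an inner automorphism of the formal polydisk coming from the change of étale coordinates, and such automorphisms act trivially on Kontsevich's $\mcal{U}$ modulo the formal connection $\nabla$.

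Part (2) then falls out of the construction: each component $\Psi_{s_i; 1}$ reduces modulo the maximal ideal of principal parts to $\nu$, and since the mixed resolutions of both sides respect this reduction on cohomology, the induced map on $\mrm{H}$ is identified with the HKR map. For part (3), the equality $s_{\rho(i)} \circ g = s'_i$ makes the local Kontsevich formalities pull back \emph{strictly} under $g$, not merely up to homotopy. Combined with Proposition \ref{prop:29} and the functoriality of $\what{\til{\mrm{N}}}\mrm{C}_{\bsym{U}}(-)$ in $(g, \rho)$, this gives commutativity of the functoriality square.

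The main obstacle is the gluing in the second step: distinct local $\Psi_{s_i}$ are genuine $\mrm{L}_{\infty}$ morphisms of DG Lie algebras, and on overlaps they are only equal up to an $\mrm{L}_{\infty}$ homotopy induced by the change-of-coordinates automorphism of the formal polydisk. To upgrade this into a single well-defined morphism of sheaves one has to exploit the precise form of the mixed resolution: by including $\Omega_X$ and the Grothendieck connection, the formal-coordinate ambiguity becomes $\nabla$-exact, and the $\mrm{L}_{\infty}$ correction terms appear as higher components in the $\Omega_X$-direction of $\what{\til{\mrm{N}}}$. This is exactly the mechanism exploited in \cite[Sections 6--9]{Ye1}; the present statement is a slight improvement in that it incorporates the naturality in $(g, \rho)$, which is what powers Theorem \ref{thm:7} and the main Theorem \ref{thm:8} further below.
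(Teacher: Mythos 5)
Your treatment of parts (1) and (3) is essentially the same as the paper's: both reduce to the construction of $\Psi_{\bsym{s}}$ in \cite{Ye1} (resp.\ its Erratum), and part (3) to the fact that the local Kontsevich formalities pull back strictly when $s_{\rho(i)} \circ g = s'_i$. One caveat: your claim that the change-of-coordinate automorphisms of the formal polydisk ``act trivially on Kontsevich's $\mcal{U}$ modulo the formal connection $\nabla$'' is not correct as stated --- $\mcal{U}$ is only $\mrm{GL}_n$-equivariant, and nonlinear formal coordinate changes act nontrivially; this is exactly why the Fedosov-type gluing in \cite{Ye1} is delicate. Since you ultimately defer to \cite[Sections 6--9]{Ye1} for this, it does not sink the argument, but the sentence should not stand as written.

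The genuine gap is part (2), which is the one place where the paper supplies a new proof rather than a citation. You assert that (2) ``falls out of the construction'' because $\Psi_{\bsym{s};1}$ reduces to $\nu$ modulo the maximal ideal of $\mcal{P}_X$, but that is not enough: the vertical arrows in the diagram are the augmentations $\mcal{M} \to \opn{Mix}_{\bsym{U}}(\mcal{M})$, and a priori the components of $\Psi_{\bsym{s};1}$ in the higher $\Omega_X \otimes_{\mcal{O}_X} \mcal{P}_X$ and \v{C}ech directions could contribute to the induced map on cohomology, so knowing only the ``leading term'' of $\Psi_{\bsym{s};1}$ does not identify $\mrm{H}(\Psi_{\bsym{s};1})$ with $\mrm{H}(\nu)$. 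The paper's argument (due to Van den Bergh) filters $\opn{Mix}_{\bsym{U}}(\mcal{M})$ by cohomological degree, $G^j := \boplus_{i \geq j} \opn{Mix}^i_{\bsym{U}}(\mcal{M})$, obtains a spectral sequence with $E_1^{p,q} \cong \opn{Mix}^p_{\bsym{U}}(\mrm{H}^q \mcal{M})$ that collapses at $E_2$ (concentrated in $p=0$ where it equals $\mrm{H}^q\mcal{M}$), and then uses the Erratum's identity $\opn{gr}^p_G(\Psi_{\bsym{s};1})[p] = \opn{Mix}^p_{\bsym{U}}(\nu)$ to see that the induced map on $E_2^{0,q}$ is $\mrm{H}(\nu)$; the collapse is precisely what guarantees that no higher-filtration terms interfere and that the resulting identification is compatible with the augmentations. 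You need to supply this (or an equivalent) bookkeeping step; without it, part (2) is unproved.
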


\begin{proof}
Part (1) is the content of \cite[Theorem 0.2]{Ye1}, which is repeated in
greater detail (and with a correct proof) as \cite[Erratum, Theorem 1.2]{Ye1}.
Part (3) is a direct consequence of the construction of the 
 $\mrm{L}_{\infty}$ quasi-isomorphism $\Psi_{\bsym{s}}$
in the proof of \cite[Erratum, Theorem 1.2]{Ye1}.

The idea for the proof of part (2) was communicated to us by M. Van den Bergh.
Let $\mcal{M}$ be a bounded below complex of quasi-coherent
$\mcal{O}_X$-modules. For $p \geq 0$ let
\[ G^p (\opn{Mix}_{\bsym{U}}(\mcal{M})) :=
\boplus_{i \geq p} \opn{Mix}^i_{\bsym{U}}(\mcal{M}) . \]
This gives a decreasing filtration $G = \{ G^p \}_{p \geq 0}$
of $\opn{Mix}_{\bsym{U}}(\mcal{M})$
by subcomplexes. Note that 
$\opn{gr}_G^p (\opn{Mix}_{\bsym{U}}(\mcal{M})) = 
\opn{Mix}^p_{\bsym{U}}(\mcal{M})[-p]$
as complexes. 
The filtration $G$ gives rise to a convergent spectral sequence
$E_r^{p, q}(\mcal{M}) \Rightarrow 
\mrm{H}^{p+q} (\opn{Mix}_{\bsym{U}}(\mcal{M}))$,
and its first page is
\[ E_1^{p, q}(\mcal{M}) =
\mrm{H}^{p+q} \bigl( \opn{gr}_G^p (\opn{Mix}_{\bsym{U}}(\mcal{M})) \bigr)
\cong \opn{Mix}^p_{\bsym{U}}(\mrm{H}^q (\mcal{M})) . \]
The differential 
$E_1^{p, q}(\mcal{M}) \to E_1^{p+1, q}(\mcal{M})$
is the differential of the mixed resolution. 
Note that all these objects are quasi-coherent sheaves on $X$. 
Because of the quasi-isomorphism 
$\opn{mix} :   \mrm{H}^q (\mcal{M}) \to
\opn{Mix}_{\bsym{U}}(\mrm{H}^q (\mcal{M}))$
we have 
\begin{equation} \label{eqn:33}
E_2^{p, q}(\mcal{M}) \cong
\begin{cases}
\mrm{H}^q (\mcal{M}) & \text{if } p = 0 \\
0 & \text{if } p \neq 0 . 
\end{cases} 
\end{equation}
We see that the spectral sequence collapses, and 
$E_{\infty}^{p, q}(\mcal{M}) = E_2^{p, q}(\mcal{M})$.
In particular the induced filtration on the limit 
$\mrm{H}^{p+q} (\opn{Mix}_{\bsym{U}}(\mcal{M}))$
of the spectral sequence has only one nonzero jump (at level $G^0$).

According to \cite[Erratum, Theorem 1.2]{Ye1} the homomorphism of complexes
\[ \Psi_{\bsym{s}; 1} :
\opn{Mix}_{\bsym{U}}(\mcal{T}_{\mrm{poly}, X}) \to 
\opn{Mix}_{\bsym{U}}(\mcal{D}_{\mrm{poly}, X}) \]
respects the filtrations $G$, and for every $p$ there is equality
of homomorphisms of complexes
\begin{equation} \label{eqn:34}
\opn{gr}^p_G(\Psi_{\bsym{s}; 1})[p] = \opn{Mix}^p_{\bsym{U}}(\Upsilon_1) :
\opn{Mix}^p_{\bsym{U}}(\mcal{T}_{\mrm{poly}, X}) \to 
\opn{Mix}^p_{\bsym{U}}(\mcal{D}_{\mrm{poly}, X}) .
\end{equation}
Moreover, by \cite[Theorem 4.17]{Ye3}, 
$\opn{Mix}^p_{\bsym{U}}(\Upsilon_1)$ is a quasi-isomorphism.

Since $\Psi_{\bsym{s}; 1}$ respects the filtrations, there is an induced 
map of spectral sequences
$E_r^{p, q}(\Psi) : E_r^{p, q}(\mcal{T}) \to E_r^{p, q}(\mcal{D})$,
where we abbreviate 
$\mcal{T} := \mcal{T}_{\mrm{poly}, X}$ etc. 
{}From (\ref{eqn:34}) we see that there is an isomorphism in the first pages of
the spectral sequences
\[ E_1^{p, q}(\Psi) :
\opn{Mix}^p_{\bsym{U}}(\mcal{T}^q_{\mrm{poly}, X})  \cong
E_1^{p, q}(\mcal{T}) \to E_1^{p, q}(\mcal{D}) \cong
\opn{Mix}^p_{\bsym{U}}(\mrm{H}^{q}  (\mcal{D}_{\mrm{poly}, X})) . \]
Since the differentials are the same, it follows that
$E_2^{p, q}(\Psi) : E_2^{p, q}(\mcal{T}) \to E_2^{p, q}(\mcal{D})$
is an isomorphism.

Finally let's examine the diagram of isomorphisms
\[ \UseTips \xymatrix @C=9ex @R=5ex {
\mcal{T}^q_{\mrm{poly}, X}
\ar[r]^{\mrm{H}^q(\Upsilon_1)}
\ar[d]_{\alpha}
&
\mrm{H}^q (\mcal{D}_{\mrm{poly}, X})
\ar[d]^{\alpha}
\\
E_2^{0, q}(\mcal{T}) 
\ar[r]^{E_2^{0, q}(\Psi)}
\ar[d]_{\be}
&
E_2^{0, q}(\mcal{D}) 
\ar[d]^{\be}
\\
\mrm{H}^q (\opn{Mix}_{\bsym{U}}(\mcal{T}_{\mrm{poly}, X}))
\ar[r]^{\mrm{H}^q(\Psi_{\bsym{s}; 1})}
&
\mrm{H}^q (\opn{Mix}_{\bsym{U}}(\mcal{D}_{\mrm{poly}, X})) \ .
} \]
The arrows $\al$ come from (\ref{eqn:33}); and the top square commutes
because of (\ref{eqn:34}) with $p = 0$.
The arrows $\be$ come from the collapse of the spectral sequence, and for
this reason the bottom square is commutative. So the whole diagram is
commutative.
\end{proof}

\begin{thm}[Cosimplicial Formality] \label{thm:221}
In the situation of Theorem \tup{\ref{thm:13}} there is a diagram
\[ \UseTips \xymatrix @C=12ex @R=5ex {
\mrm{C}(\bsym{U}, \mcal{T}_{\mrm{poly}, X})
\ar[d]_{\opn{C}(\bsym{U}, \opn{mix})}
&
\mrm{C}(\bsym{U}, \mcal{D}_{\mrm{poly}, X}^{\mrm{nor}})
\ar[d]^{\opn{C}(\bsym{U}, \opn{mix})}
\\
\mrm{C} \bigl( \bsym{U} , \opn{Mix}_{\bsym{U}}(\mcal{T}_{\mrm{poly}, X})
\bigr)
\ar[r]^{\mrm{C}(\bsym{U} , \Psi_{\bsym{s}})} 
&
\mrm{C} \bigl( \bsym{U} , \opn{Mix}_{\bsym{U}}(\mcal{D}_{\mrm{poly}, X})
\bigr) \ ,
} \]
where the objects are cosimplicial quantum type DG Lie algebras, the
vertical arrows are cosimplicial DG Lie quasi-isomorphisms, and the arrow 
$\Psi_{\bsym{s}}$ is a cosimplicial $\mrm{L}_{\infty}$ quasi-isomorphism. 

This diagram is functorial with respect to 
\'etale morphisms $g : X' \to X$ and morphisms of coverings 
$\rho : (\bsym{U}', \bsym{s}') \to (\bsym{U}, \bsym{s})$
extending $g$, as in diagram \tup{(\ref{eqn:TDQ.101})}. 
\end{thm}

\begin{proof}
The diagram is obtained by applying the \v{C}ech cosimplicial
construction $\mrm{C}(\bsym{U} , -)$ to the  $\mrm{L}_{\infty}$
quasi-isomorphism $\Psi_{\bsym{s}}$
in  Theorem \ref{thm:13}(1), to the mixed resolutions 
$\mcal{T}_{\mrm{poly}, X} \to 
\opn{Mix}_{\bsym{U}}(\mcal{T}_{\mrm{poly}, X})$
and 
$\mcal{D}_{\mrm{poly}, X} \to 
\opn{Mix}_{\bsym{U}}(\mcal{D}_{\mrm{poly}, X})$, 
and to the inclusion 
$\mcal{D}_{\mrm{poly}, X}^{\mrm{nor}} \to 
\mcal{D}_{\mrm{poly}, X}$.

Functoriality is guaranteed by Theorem \ref{thm:13}(3), and by the 
\'etale functoriality of $\mcal{T}_{\mrm{poly}, X}$,
$\mcal{D}_{\mrm{poly}, X}$ and $\mcal{D}_{\mrm{poly}, X}^{\mrm{nor}}$
from \cite[Proposition 4.6]{Ye2}.
\end{proof}

\section{Twisted Quantization}
\label{sec:TDQ}

Let $X$ be a smooth algebraic variety over $\K$, and $R$ a parameter algebra.
We have the stacks of crossed groupoids 
$\twocat{PoisDef}(R, \mcal{O}_X)$ and 
$\twocat{AssDef}(R, \mcal{O}_X)$ on $X$.
Recall that a twisted object of $\twocat{PoisDef}(R, \mcal{O}_X)$
is a twisted Poisson $R$-deformation of $\OX$, and 
a twisted object of $\twocat{AssDef}(R, \mcal{O}_X)$
is a twisted associative $R$-deformation of $\OX$.
We denote by $\ol{\opn{TwOb}}(-)$ the sets of twisted gauge equivalence classes
of twisted objects. First order brackets were defined in Definition
\ref{dfn:22}.

Here is the main result of our paper (the expanded form of Theorem
\ref{thm:14}).

\begin{thm} \label{thm:8}
Let $\K$ be a field containing the real numbers, let $R$ be a 
parameter $\K$-algebra, and let $X$ be a smooth algebraic variety over
$\K$. Then there is a bijection of sets
\[ \opn{tw{.}quant} : \
\ol{\opn{TwOb}} \bigl( \twocat{PoisDef}(R, \mcal{O}_X) \bigr) 
\ \iso \
\ol{\opn{TwOb}} \bigl( \twocat{AssDef}(R, \mcal{O}_X) \bigr) \] 
called  {\em twisted quantization}, having these
properties\tup{:}
\begin{enumerate}
\rmitem{i} If $g : X' \to X$ is an \'etale morphism of varieties, and
if $\sigma : R \to R'$ is a homomorphism of parameter algebras, then
the diagram
\[ \UseTips \xymatrix @C=9ex @R=5ex {
\ol{\opn{TwOb}} \bigl( \twocat{PoisDef}(R, \mcal{O}_X) \bigr)
\ar[r]^{\opn{tw{.}quant}}
\ar[d]_{\opn{ind}_{\sigma, g}}
& 
\ol{\opn{TwOb}} \bigl( \twocat{AssDef}(R, \mcal{O}_X) \bigr)
\ar[d]_{\opn{ind}_{\sigma, g}} 
\\
\ol{\opn{TwOb}} \bigl( \twocat{PoisDef}(R', \mcal{O}_{X'}) \bigr)
\ar[r]^{\opn{tw{.}quant}}
& 
\ol{\opn{TwOb}} \bigl( \twocat{AssDef}(R', \mcal{O}_{X'}) \bigr)
} \]
\tup{(}cf.\ Theorem \tup{\ref{thm:7})} is commutative.
\rmitem{ii} The bijection $\opn{tw{.}quant}$ preserves first order brackets.
Namely if 
$\gerbe{A}$ is a twisted Poisson $R$-deformation of
$\mcal{O}_X$, and $\gerbe{B} := \opn{tw{.}quant}(\gerbe{A})$, 
then 
\[ \{ -,- \}_{\gerbe{A}} = \{ -,- \}_{\gerbe{B}} . \]
\end{enumerate}
\end{thm}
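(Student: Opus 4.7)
The plan is to reduce each side of the claimed bijection to gauge equivalence classes of Maurer--Cartan elements in the Thom--Sullivan normalization of a \v{C}ech cosimplicial DG Lie algebra, and then transport between the Poisson and associative sides by Kontsevich formality on the mixed resolution (Theorem \ref{thm:13}).

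Choose a finite affine ordered covering with coordinates $(\bsym{U}, \bsym{s})$ of $X$; this exists because $X$ is smooth. Set $\g_{\mcal{T}} := \mrm{C}(\bsym{U}, \mcal{T}_{\mrm{poly}, X})$ and $\g_{\mcal{D}^{\mrm{nor}}} := \mrm{C}(\bsym{U}, \mcal{D}^{\mrm{nor}}_{\mrm{poly}, X})$, both quantum type cosimplicial DG Lie algebras. Composing the decomposition $\opn{dec}$ of Corollary \ref{cor:4}, the inverse of the bijection $\opn{exp}$ of Theorem \ref{thm:11}, and the inverse of $\opn{int}$ of Theorem \ref{thm:9}, one obtains bijections
\[
\ol{\opn{TwOb}} \bigl( \twocat{PoisDef}(R, \mcal{O}_X) \bigr) \iso \ol{\opn{MC}}(\m \hatotimes{\K} \til{\mrm{N}} \g_{\mcal{T}})
\]
and similarly $\ol{\opn{TwOb}} \bigl( \twocat{AssDef}(R, \mcal{O}_X) \bigr) \iso \ol{\opn{MC}}(\m \hatotimes{\K} \til{\mrm{N}} \g_{\mcal{D}^{\mrm{nor}}})$. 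Next, build a zigzag of $L_{\infty}$ quasi-isomorphisms of DG Lie algebras
\[
\til{\mrm{N}} \g_{\mcal{T}} \xrightarrow{\sim} \Gamma \bigl( X, \opn{Mix}_{\bsym{U}}(\mcal{T}_{\mrm{poly}, X}) \bigr) \xrightarrow{\Psi_{\bsym{s}}} \Gamma \bigl( X, \opn{Mix}_{\bsym{U}}(\mcal{D}_{\mrm{poly}, X}) \bigr) \xleftarrow{\sim} \til{\mrm{N}} \mrm{C}(\bsym{U}, \mcal{D}_{\mrm{poly}, X}) \xleftarrow{\sim} \til{\mrm{N}} \g_{\mcal{D}^{\mrm{nor}}},
\]
where the outer quasi-isomorphisms come from Proposition \ref{prop:22} together with the identity $\til{\mrm{N}} \mrm{C}(\bsym{U}, \mcal{G}) = \Gamma(X, \til{\mrm{N}} \mrm{C}_{\bsym{U}}(\mcal{G}))$ and (\ref{eqn:20}), the middle arrow is Kontsevich formality (Theorem \ref{thm:13}), and the final arrow is induced by the quasi-isomorphism $\mcal{D}^{\mrm{nor}}_{\mrm{poly}, X} \hookrightarrow \mcal{D}_{\mrm{poly}, X}$ from the proof of \cite[Corollary 4.12]{Ye2}. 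By (\ref{eqn:29}) and (\ref{eqn:22}) each arrow induces a bijection on gauge classes of MC elements after base change to $\m$, and their composition is the candidate $\opn{tw{.}quant}_{\bsym{U}, \bsym{s}}$.

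To prove independence of $(\bsym{U}, \bsym{s})$ and property (i), I would use the concatenation trick of Remark \ref{rem:1} to compare any two coverings with coordinates (and, for (i), any two coverings with coordinates compatible with $g : X' \to X$), then exploit the functoriality of every step: Theorem \ref{thm:11} and Corollary \ref{cor:4} on the descent side, Theorem \ref{thm:9}(i) for $\opn{int}$, Proposition \ref{prop:29} for mixed resolutions under \'etale maps, and Theorem \ref{thm:13}(3) for $\Psi_{\bsym{s}}$. Chaining these into a single commutative diagram shows $\opn{tw{.}quant}$ is a well-defined bijection compatible with $\opn{ind}_{\sigma, g}$ of Theorem \ref{thm:7}. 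For property (ii), when $X$ is affine take $\bsym{U} = \{ X \}$; the cosimplicial objects collapse to their degree $0$ part, $\til{\mrm{N}}$ becomes the identity, and the chain reduces to the construction underlying Theorem \ref{thm:16}. For property (iii), the first-order bracket of $\gerbe{A}$ is read off the class of $\beta \bmod \m^2$ in $\mrm{H}^1 \bigl( \Gamma(X, \opn{Mix}_{\bsym{U}}(\mcal{T}_{\mrm{poly}, X})) \bigr)$; by Theorem \ref{thm:13}(2), $\mrm{H}(\Psi_{\bsym{s}; 1})$ equals the antisymmetrization $\mrm{H}(\nu)$, and under $\nu$ a bivector $\xi \wedge \eta$ corresponds to half the commutator of its two factors, which is precisely the identification of the first-order Poisson bracket with the first-order commutator bracket.

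The hard part will be the independence of $\opn{tw{.}quant}$ from the covering with coordinates and the verification of property (i), because $\Psi_{\bsym{s}}$ genuinely depends on $\bsym{s}$. Organizing the functoriality statements of Theorems \ref{thm:9}, \ref{thm:11}, \ref{thm:13}, Proposition \ref{prop:29}, and Corollary \ref{cor:4} into one commutative diagram that tracks morphisms of coverings with coordinates at every stage is the principal bookkeeping task; once it is in place, properties (ii) and (iii) follow by direct specialization.
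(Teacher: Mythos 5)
Your proposal follows essentially the same route as the paper's proof: the identical chain of bijections $\opn{dec}$ (Corollary \ref{cor:4}), $\opn{exp}$ (Theorem \ref{thm:11}), $\opn{int}$ (Theorem \ref{thm:9}), the comparison of the Thom--Sullivan normalized \v{C}ech complex with the mixed resolution, and the formality quasi-isomorphism $\Psi_{\bsym{s}}$ in the middle, with independence of the covering and property (i) obtained by concatenating coverings and chaining the functoriality statements, exactly as the paper does by recycling the three-dimensional diagram argument from the proof of Theorem \ref{thm:7}.

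Two smaller points diverge. For property (ii) you propose to take $\bsym{U} = \{ X \}$ when $X$ is affine; but a covering with coordinates requires \'etale morphisms $s_i : U_i \to \mbf{A}^n_{\K}$, and a smooth affine variety need not admit a global \'etale coordinate system (its cotangent module is projective but in general not free), so the chain does not collapse as you describe. The paper avoids this by observing that the construction of $\opn{tw{.}quant}$, run on an arbitrary covering with coordinates of the affine $X$, is literally the construction of $\opn{quant}$ in Theorem \ref{thm:16} (i.e.\ in \cite{Ye1}). For property (iii) your direct argument via $\mrm{H}(\Psi_{\bsym{s}; 1}) = \mrm{H}(\nu)$ is a legitimate alternative to the paper's reduction to affine open sets followed by an appeal to Theorem \ref{thm:16}; to make it complete you must also justify that the first order bracket of a twisted deformation is computed by the class of its MC element modulo $\m^2$, which follows from Theorem \ref{thm:9}(iii) together with Theorem \ref{thm:11} and the local description of first order brackets in the proof of Theorem \ref{thm:7}(iv).
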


\begin{proof}
We shall use the diagram in Figure \ref{fig:diagram}
(that appears in Subsection \ref{subsec:TDQ} of the Introduction).  
Let $(\bsym{U}, \bsym{s})$ be a finite affine open covering with \'etale
coordinates of $X$.
By Theorem \ref{thm:glu.1} and Corollary \ref{cor:4} we get the bijections
$\ol{\opn{dec}}$ in the diagram.
{}From Theorem \ref{thm:11} we get the bijections $\ol{\opn{geo}}$ in that
diagram. By applying Theorem \ref{thm:Lie-desc.101} to the diagram in Theorem 
\ref{thm:221} we obtain the bijections 
\[ \ol{\opn{mix}} := 
\ol{\opn{Desc}} (\opn{Del}) ( \mrm{C}(\bsym{U}, \opn{mix}), \bsym{1}_R) \]
and 
\[ \ol{\Psi}_{\bsym{s}} := 
\ol{\opn{Desc}} (\opn{Del}) (\mrm{C}(\bsym{U}, \Psi_{\bsym{s}}), \bsym{1}_R)
\]
in the diagram in Figure \ref{fig:diagram}.
We define $\opn{tw{.}quant}$ to be the unique bijection making this diagram
commutative. 

The whole diagram in Figure \ref{fig:diagram} is
functorial w.r.t.\ $R$ and $(\bsym{U}, \bsym{s})$; this is a consequence of
Theorems \ref{thm:7} and \ref{thm:221}.

Consider another finite affine open covering with \'etale coordinates
$(\bsym{V}, \bsym{t})$ of $X$, and a morphism of coverings with coordinates
$(\bsym{V}, \bsym{t}) \to (\bsym{U}, \bsym{s})$. 
The functoriality of the diagram in Figure \ref{fig:diagram} implies that 
the bijection $\opn{tw{.}quant}$ is the same when defined with respect 
$(\bsym{V}, \bsym{t})$.

Now suppose $(\bsym{V}, \bsym{t})$ is any finite affine open covering with
\'etale coordinates of $X$. As in the proof of Theorem \ref{thm:7}, 
we get a third covering with \'etale coordinates
$(\bsym{W}, \bsym{r})$, where 
$\bsym{W} := \bsym{U} \coprod \bsym{V}$ and  
$\bsym{r} := \bsym{s} \coprod \bsym{t}$. There are morphisms of coverings 
$(\bsym{U}, \bsym{s}) \to (\bsym{W}, \bsym{r})$
and $(\bsym{V}, \bsym{t}) \to (\bsym{W}, \bsym{r})$.
By the previous paragraph, the bijection $\opn{tw{.}quant}$ is the same when
defined with respect to any of these three coverings with \'etale coordinates.
We conclude that $\opn{tw{.}quant}$ is independent of the choices made. 

Let  $g : X' \to X$ be an \'etale morphism of varieties, and let
$\sigma : R \to R'$ be a homomorphism of parameter algebras. Choose any
morphism of coverings with coordinate 
$\rho : (\bsym{U}', \bsym{s}') \to (\bsym{U}, \bsym{s})$
extending $g$. The functoriality of the diagram in Figure \ref{fig:diagram}
implies that there is a commutative diagram as stated in property (i). 

It remains to prove that property (ii) holds. Let $\gerbe{A}$ and 
$\gerbe{B}$ be such twisted $R$-deformations. 
Consider the corresponding Lie descent data 
\[ (\om, g, a) \in 
\opn{Desc} \bigl( \opn{Del} (\mrm{C}(\bsym{U}, \mcal{T}_{\mrm{poly},X}), R)
\bigr) \]
and 
\[ (\chi, h, b) \in 
\opn{Desc} \bigl(  \opn{Del} (\mrm{C}(\bsym{U}, 
\mcal{D}^{\mrm{nor}}_{\mrm{poly},X}) , R) \bigr) . \]
They are related by the equation
\[ \begin{aligned}
& \ol{(\chi, h, b)} = \ol{\opn{Desc}} (\opn{Del})
(\mrm{C}(\bsym{U}, \Psi_{\bsym{s}}), \bsym{1}_R) ( \ol{(\om, g, a)} ) 
\\
& \qquad \qquad \qquad \in 
\ol{\opn{Desc}} \bigl(  \opn{Del}(\mrm{C} (\bsym{U}, 
\opn{Mix}_{\bsym{U}} (\mcal{D}_{\mrm{poly},X})), R) \bigr) ,
\end{aligned} \]
where $\ol{(\chi, h, b)}$ and $\ol{(\om, g, a)}$ are the equivalence classes.
We have to show that the first order brackets
$\{ -,- \}_{\gerbe{A}}$ and $\{ -,- \}_{\gerbe{B}}$ on 
$\mcal{O}_X$ are equal.

The first order brackets 
$\{ -,- \}_{\gerbe{A}}$ and $\{ -,- \}_{\gerbe{B}}$
depend only on $\om$ and $\chi$. Indeed, by Definitions \ref{dfn:22} and
\ref{dfn:20} we have
$\{ c_1, c_2 \}_{\gerbe{A}} = \{ c_1, c_2 \}_{\om}$
and 
$\{ c_1, c_2 \}_{\gerbe{B}} = \{ c_1, c_2 \}_{\chi}$
for local sections $c_1, c_2 \in \OO_X$,
where 
$\{ c_1, c_2 \}_{\om} := \psi^+(\Upsilon_1(\om)(c_1, c_2))$
and 
$\{ c_1, c_2 \}_{\chi} := \psi^+(\chi(c_1, c_2))$. 
Here $\psi^+$ is the homomorphism from equation (\ref{eqn:400}), and 
$\Upsilon_1$ is the antisymmetrization.
Property (iii) of Theorem \ref{thm:Lie-desc.101} says that
$\ol{\chi} = \ol{\opn{MC}}(\bsym{1}_R \ot \Psi_{\bsym{s}})( \ol{\om} )$
in 
\[ \ol{\opn{MC}} \bigl(\m \hot 
\mrm{C}^0(\bsym{U}, \opn{Mix}_{\bsym{U}} (\mcal{D}_{\mrm{poly},X}))  \bigr) . \]

The first order brackets are functorial in $R$, and hence
we can assume that $\m^2 = 0$, so $\m / \m^2 = \m$. 
Now the contribution of the higher order terms $\Psi_{\bsym{s}; i}$, $i \geq 2$,
to $\opn{MC}(\bsym{1}_R \ot \Psi_{\bsym{s}})$ is zero, because 
$\m^2 = 0$. Therefore the elements
$\chi$ and 
$\chi' := (\bsym{1}_R \ot \Psi_{\bsym{s}; 1})(\om)$
of 
$\m \ot \mrm{C}^0(\bsym{U}, \opn{Mix}_{\bsym{U}} (\mcal{D}_{\mrm{poly},X}))$
are gauge equivalent.
We conclude that $\{ -,- \}_{\chi} = \{ -,- \}_{\chi'}$.

A little calculation shows that the first order bracket $\{ -,- \}_{\chi'}$
depends only on the cohomology class $[\chi']$ of $\chi'$ in
$\m \ot \mrm{C}^0 \bigl( \bsym{U}, 
\mrm{H}^1 ( \opn{Mix}_{\bsym{U}} (\mcal{D}_{\mrm{poly},X})) \bigr)$;
this is a manifestation of the gauge invariance of first order brackets.
But according to Theorem \ref{thm:13}(2) we have 
$[\chi'] = [\Upsilon_1(\om)]$.
We see that $\{ -,- \}_{\chi'} = \{ -,- \}_{\om}$.
\end{proof}

\begin{cor} \label{cor:TDQ.105}
In the situation of Theorem \tup{\ref{thm:8}}, 
assume the cohomology groups 
$\mrm{H}^1(X, \mcal{O}_X)$ and $\mrm{H}^2(X, \mcal{O}_X)$ vanish. 
Then there is a bijection
\[ \begin{aligned}
& \opn{quant} : 
\frac{ \{ \tup{Poisson $R$-deformations of $\mcal{O}_X$} \} }
{\tup{ gauge equivalence}} \\
& \hspace{9ex} \iso \
\frac{ \{ \tup{associative $R$-deformations of $\mcal{O}_X$} \} }
{\tup{ gauge equivalence}} . 
\end{aligned} \] 
It preserves first order brackets, and commutes with
homomorphisms $R \to R'$ and  \'etale morphisms $X' \to X$
\tup{(}such that $X'$ also has these cohomology vanishing properties\tup{)}.
\end{cor}

\begin{proof}
Combine Theorem \ref{thm:8} and Corollary \ref{cor:7}.
\end{proof}

A {\em formal Poisson bracket} on the augmented commutative $R$-algebra 
$R \hot \OX$ is an $R$-bilinear Poisson bracket $\{ -,- \}$ that vanishes modulo
$\m$; so it makes $R \hot \OX$ into a Poisson $R$-deformation of $\OX$. 
A {\em star product} on the augmented $R$-module $R \hot \OX$ is an 
associative unital $R$-bilinear multiplication, with unit 
$1 \ot 1$, that reduces modulo $\m$ to the commutative multiplication of $\OX$;
so it makes $R \hot \OX$ into an associative $R$-deformation of $\OX$. 

By definition 
$\mcal{T}^0_{\mrm{poly},X} = \mcal{T}_X$, the sheaf of derivations, and 
$\mcal{D}^{0}_{\mrm{poly},X} = \mcal{D}_X$, the sheaf of differential operators.

\begin{cor} \label{cor:TDQ.102}
In the situation of Theorem \tup{\ref{thm:8}}, 
assume the cohomology groups 
$\mrm{H}^1(X, \mcal{O}_X)$, $\mrm{H}^2(X, \mcal{O}_X)$, 
$\mrm{H}^1(X, \mcal{T}_X)$ and $\mrm{H}^1(X, \mcal{D}_X)$
all vanish. Then there is a canonical bijection
\[ \begin{aligned}
& \opn{quant} : 
\frac{ \{ \tup{formal Poisson brackets on $R \hot \mcal{O}_X$} \} }
{\tup{ gauge equivalence}} \\
& \hspace{9ex} \iso \
\frac{ \{ \tup{star products on $R \hot \mcal{O}_X$} \} }
{\tup{ gauge equivalence}} . 
\end{aligned} \] 
It preserves first order brackets, and commutes with
homomorphisms $R \to R'$ and  \'etale morphisms $X' \to X$ 
\tup{(}such that $X'$ also has these cohomology vanishing properties\tup{)}.
\end{cor}

\begin{proof}
Theorem \ref{thm:205} implies the following: if 
$\mrm{H}^1(X, \mcal{D}_{\mrm{poly}, X}^{\mrm{nor}, 0}) = 0$, then every
associative $R$-deformation $\mcal{A}$ of $\mcal{O}_X$ is sheaf theoretically
trivial, namely there is an isomorphism $\mcal{A} \cong R \hot \mcal{O}_X$ of
sheaves of $R$-modules, that commutes with the augmentations to $\mcal{O}_X$
and preserves the units. 
The proof is the same as that of \cite[Theorem 1.13]{Ye1}, using Remark
\ref{rem:defs-sh.110}. Since there is an isomorphism of sheaves of $\K$-modules 
$\mcal{D}_X \cong \mcal{O}_X \oplus \mcal{D}_{\mrm{poly}, X}^{\mrm{nor}, 0}$,
our assumptions imply that 
$\mrm{H}^1(X, \mcal{D}_{\mrm{poly}, X}^{\mrm{nor}, 0}) = 0$.
Hence associative deformations are the same, up to gauge equivalence, as
star products. 

Similarly, the vanishing of $\mrm{H}^1(X, \mcal{T}_X)$ implies, via 
Theorem \ref{thm:205}, that Poisson deformations are the same as formal
Poisson brackets. 

The quantization statement is now the same as in Corollary \ref{cor:TDQ.105}.
\end{proof}

\begin{exa}
The conditions of Corollary \ref{cor:TDQ.102} hold for 
$X := \mbf{P}^n_{\K}$, $n \geq 1$. Indeed, since $X$ is a $\mcal{D}$-affine
variety, we know that $\mrm{H}^i(X, \mcal{D}_X) = 0$ for all $i > 0$. 
The vanishing of $\mrm{H}^i(X, \mcal{O}_X)$ and 
$\mrm{H}^i(X, \mcal{T}_X)$ for $i > 0$ follows from 
\cite[Theorem III.5.1 and Example II.8.20.1]{Ha}. 
We thank Van den Bergh for mentioning this to us.
\end{exa}

\begin{rem}
When $X$ is an affine variety, the quantization map of Corollary
\ref{cor:TDQ.102} coincides with that of \cite[Theorem 0.1]{Ye1}. This can be
seen by looking at the construction of these quantization maps. (For 
\cite[Theorem 0.1]{Ye1} please consult the corrected proof in the Erratum.)
\end{rem}

\begin{rem} \label{rem:TDQ.101}
The reason we require that $\K$ contains $\R$ is because we rely on the
original universal $\mrm{L}_{\infty}$ formality quasi-isomorphism 
\begin{equation} \label{eqn:extras.12}
\Upsilon = \{ \Upsilon_i \}_{i \geq 1} : 
\mcal{T}_{\mrm{poly}}(\R[\bsym{t}]) \to 
\mcal{D}_{\mrm{poly}}(\R[\bsym{t}])
\end{equation}
of Kontsevich for the polynomial ring $\R[\bsym{t}] = \R[t_1, \ldots, t_n]$; see
\cite{Ko1}. However $\Upsilon$ can be replaced with any other $\mrm{L}_{\infty}$
quasi-isomorphism
$\Upsilon'$ in the proof of \cite[Theorem 0.2]{Ye1}, as long as $\Upsilon'$ has
properties (P)-(P5) of \cite{Ko1}. 

Now D.E. Tamarkin found a construction of such an $\mrm{L}_{\infty}$
quasi-isomorphism $\Upsilon'$ over $\mbb{Q}$ (see \cite{Ta}, and
\cite[Appendix]{CV} for a complete proof.)
Therefore the condition $\R \subset \K$ can in fact be removed.
\end{rem}

We end the paper with a few questions. 
A twisted $\K[[\hbar]]$-deformation $\gerbe{A}$ of $\mcal{O}_X$
is called {\em symplectic} if the first order bracket
$\{ -,- \}_{\gerbe{A}}$ is a symplectic Poisson bracket on
$\mcal{O}_X$ (cf.\ Proposition \ref{prop:20}(3)). 
This is the most noncommutative deformation possible: the center of a local
object $\mcal{A}$ of $\gerbe{A}$ is the subsheaf 
$\K[[\hbar]] \subset \mcal{A}$. 

\begin{que}
It is easy to construct an example of a {\em commutative} twisted
associative $\K[[\hbar]]$-deformation of $\mcal{O}_X$ that is
really twisted -- see Definition \ref{dfn:25} and Example \ref{exa:18}. But
does there exist a variety $X$, with  a  {\em symplectic} twisted associative
$\K[[\hbar]]$-deformation $\gerbe{A}$ of $\mcal{O}_X$ which is {\em really
twisted}? Perhaps the results of \cite{BK} can be useful here.
\end{que}

A more concrete (but perhaps much more challenging) question is:

\begin{que} \label{que:1}
Let $X$ be a Calabi-Yau surface over $\K$ (e.g.\ an abelian surface or
a K3 surface), and let
$\alpha$ be a symplectic Poisson bracket on $\mcal{O}_X$ (namely any
nonzero section of $\Gamma(X, \bwedge^{2}_{\mcal{O}_X} \mcal{T}_X)$).
Consider the Poisson $\K[[\hbar]]$-deformation 
$\mcal{A} := \mcal{O}_X[[\hbar]]$, with formal Poisson bracket
$\hbar \alpha$, and let $\gerbe{A}$ be the corresponding twisted
deformation (see Example \ref{exa:19}). 
Let $\gerbe{B} := \opn{tw{.}quant}(\gerbe{A})$. Is $\gerbe{B}$ really
twisted? 
If so, what is the significance of this phenomenon?
Note that the obstruction classes for $\gerbe{B}$
can be calculated explicitly; but these calculations look quite
complicated. 
Kontsevich [private communication] appears to think that the twisted
deformation $\gerbe{B}$ is really twisted, and he has an indirect
argument for that.
\end{que}

\begin{que}
The construction of the $\mrm{L}_{\infty}$ quasi-isomorphism 
$\Psi_{\bsym{s}}$ in Theorem \ref{thm:13} relied on the explicit universal
formality quasi-isomorphism $\Upsilon$ of Kontsevich; see (\ref{eqn:extras.12})
above. But suppose another universal formality quasi-isomorphism $\Upsilon'$
is used. Then the twisted quantization map 
$\opn{tw{.}quant}$ may change. Indeed, it is claimed by Kontsevich
\cite{Ko3} that the Grothendieck-Teichm\"uller group acts on the
quantizations by changing the formality quasi-iso\-morphism (or in
other words, the Drinfeld associator), and sometimes this action is
nontrivial. The question is: does this action change the geometric
nature of the resulting twisted associative deformation -- namely can
it change from being really twisted to being not really twisted?
\end{que}


\end{document}